\def\EMAIL#1{\href{mailto:#1}{#1}} 
\DeclareMathOperator*{\argmin}{argmin}
\newtheorem{theorem}{Theorem}
\newtheorem{lemma}{Lemma}
\newtheorem{assumption}{Assumption}
\newtheorem{remark}{Remark}
\title{LP-based Approximations for Disjoint Bilinear and Two-Stage Adjustable Robust Optimization}
\author{
  \vspace{3mm}
  Omar El Housni\\
  \vspace{-4mm}
  {\footnotesize Operations Research and Information Engineering, Cornell University, New York, USA, 
  \EMAIL{oe46@cornell.edu}}
  \and
  \vspace{-3mm}
  Ayoub Foussoul\\
  \vspace{-4mm}
  {\footnotesize Industrial Engineering and Operations Research, Columbia University, New York, USA, 
  \EMAIL{af3209@columbia.edu}}
  \and
  \vspace{-3mm}
  Vineet Goyal\\
  \vspace{-4mm}
  {\footnotesize Industrial Engineering and Operations Research, Columbia University, New York, USA, 
  \EMAIL{vg2277@columbia.edu}}
}
\date{}
\begin{document}

\maketitle

\begin{abstract} 
We consider the class of disjoint bilinear programs $ \max \, \{ \mathbf{x}^T\mathbf{y} \mid \mathbf{x} \in \mathcal{X}, \;\mathbf{y} \in \mathcal{Y}\}$ where $\mathcal{X}$ and $\mathcal{Y}$ are packing polytopes. We present an $O(\frac{\log \log m_1}{\log m_1}  \frac{\log \log m_2}{\log m_2})$-approximation algorithm for this problem where $m_1$ and $m_2$ are the number of packing constraints in $\mathcal{X}$ and $\mathcal{Y}$ respectively. In particular, we show that there exists a near-optimal solution $(\Tilde{\mathbf{x}}, \Tilde{\mathbf{y}})$ such that $\Tilde{\mathbf{x}}$ and $\Tilde{\mathbf{y}}$ are ``near-integral". We give an LP relaxation of the problem from which we obtain the near-optimal near-integral solution via randomized rounding. We show that our relaxation is tightly related to the widely used reformulation linearization technique (RLT). As an application of our techniques, we present a tight approximation for the two-stage adjustable robust optimization problem with covering constraints and right-hand side uncertainty where the separation problem is a bilinear optimization problem. In particular, based on the ideas above, we give an LP restriction of the two-stage problem that is an $O(\frac{\log n}{\log \log n} \frac{\log L}{\log \log L})$-approximation where $L$ is the number of constraints in the uncertainty set. This significantly improves over state-of-the-art approximation bounds known for this problem. Furthermore, we show that our LP restriction gives a feasible affine policy for the two-stage robust problem with the same (or better) objective value. As a consequence, affine policies give an  $O(\frac{\log n}{\log \log n} \frac{\log L}{\log \log L})$-approximation of the two-stage problem, significantly generalizing the previously known bounds on their performance.
\vspace{2mm}

\noindent \textbf{Keywords:} Disjoint bilinear programming; Two-stage robust optimization; Approximation Algorithms; RLT; Affine policies.

\end{abstract}

\section{Introduction}

We consider the following class of disjoint bilinear programs,
\begin{gather}
\label{eq:pdb}
\tag{PDB}
\begin{gathered}
    z_{\sf{PDB}} = \max_{\mathbf{x}, \mathbf y} \; \{\mathbf{x}^T\mathbf{y} \mid \mathbf{x} \in \mathcal{X}, \;\mathbf{y} \in \mathcal{Y}\},
\end{gathered}
\end{gather}
where $\mathcal{X}$ and $\mathcal{Y}$ are packing polytopes given by an intersection of knapsack constraints. Specifically, $$\mathcal{X} := \{\mathbf{x} \geq \mathbf{0} \mid \mathbf{P}\mathbf{x} \leq \mathbf{p}\}$$ and
$$\mathcal{Y} := \{\mathbf{y} \geq \mathbf{0} \mid \mathbf{Q}\mathbf{y} \leq \mathbf{q}\} ,$$ where $\mathbf{P} \in \mathbb{R}_+^{m_1 \times n}$, $\mathbf{Q} \in \mathbb{R}_+^{m_2 \times n}$, $\mathbf{p} \in \mathbb{R}_+^{m_1}$ and $\mathbf{q} \in \mathbb{R}_+^{m_2}$. We refer to this problem as a packing disjoint bilinear program \ref{eq:pdb}. This is a subclass of the well-studied disjoint bilinear problem: $\underset{\mathbf{x}, \mathbf{y}}{\max} \; \{\mathbf{x}^T    \mathbf{M} \mathbf{y}\; | \; \mathbf{x} \in \mathcal{X},\; \mathbf{y} \in \mathcal{Y}\} $, where $\mathbf{M}$ is a general $n \times n$ matrix. 

Disjoint bilinear programming is NP-hard in general (Chen et al. \cite{chen2007complexity}). We show that it is NP-hard to even approximate within any finite factor. Several heuristics have been studied for this problem including cutting-planes algorithms (Konno et al. \cite{Konno1976cuttingplane}), polytope generation methods (Vaish et al. \cite{vaishexpandingpolytopes}), Benders decomposition (Geoffrion \cite{GeoffrionGenbenders72}), reduction to concave minimization (Thieu \cite{Thieu88concave}), reformulation linearization techniques (Sherali and Alameddine \cite{sherali1992new}, Adams and Sherali \cite{adams1986tight}, Audet et al. \cite{audet2000branch}, Tawarmalani et al. \cite{tawarmalani2010strong}), mixed integer programming (Gupte et al. \cite{gupte2013solving}, Freire et al. \cite{freire2012integer}) and two-stage robust optimization (Zhen et al. \cite{robustZhen2018}). However, to the best of our knowledge, no approximation algorithms with provable guarantees are known for this problem.

Many important applications can be formulated as a disjoint bilinear program including fixed charge network flows (Rebennack et al. \cite{Rebennack2009fixedchargenetw}), concave cost facility location (Soland \cite{Soland1974FLwithcocavecosts}), bilinear assignment problems (Ćustić et al. \cite{custic2016bilinear_assignement}), non-convex cutting-stock problems (Harjunkoski et al. \cite{Harjunkoski1998trimloss}), multicommodity flow network interdiction problems (Lim and Smith \cite{Lim2007multicomflow}), bimatrix games (Mangasarian and Stone \cite{MANGASARIAN1964twopersongame}, Firouzbakht et al. \cite{Firouzbakht2016bimatrixgameswireless}) pooling problems (Gupte et al. \cite{Gupte2017pooling}).

One important application closely related to disjoint bilinear optimization that we focus on in this paper, is two-stage adjustable robust optimization. In particular, the separation problem of a two-stage adjustable robust problem can be formulated as a disjoint bilinear optimization problem. More specifically, we consider the following two-stage adjustable robust problem,
\begin{gather}
\tag{AR}
\label{eq:ar}
\begin{aligned}
z_{\sf{AR}} = \min_{\mathbf{x}, t} \quad & \mathbf{c}^T\mathbf{x} + t\\
& t \geq \mathcal{Q}(\mathbf{x}),\\
& \mathbf{x} \in \mathcal{X},
\end{aligned}
\end{gather}
where for all $\mathbf{x} \in \mathcal{X}$,
\begin{gather}
\label{eq:qx}
\tag*{ }
\begin{aligned}
\mathcal{Q}(\mathbf{x}) = \max_{\mathbf{h} \in \mathcal{U}} \min_{\mathbf{y}\geq \mathbf{0}}\; \{\mathbf{d}^T\mathbf{y} \mid \mathbf{A}\mathbf{x} + \mathbf{B}\mathbf{y} \geq \mathbf{h}\}.
\end{aligned}
\end{gather}
Here $\mathbf{A} \in \mathbb{R}^{m \times n^\prime}$, $\mathbf{B} \in \mathbb{R}_+^{m \times n}$, $\mathbf{c} \in \mathbb{R}_+^{n^\prime}$, $\mathbf{d} \in \mathbb{R}_+^{n}$, $\mathcal{X} \subset \mathbb{R}_+^{n^\prime}$ is a polyhedral cone, and $\mathcal{U}$ is a polyhedral uncertainty set. The separation problem of \ref{eq:ar} is the following: given a candidate solution $(\mathbf{x}, t)$, decide if it is feasible, i.e., $\mathbf{x} \in \mathcal{X}$ and $t \geq \mathcal{Q}(\mathbf{x})$ or give a separating hyperplane. This is equivalent to solving $\hyperref[eq:qx]{\mathcal{Q}(\mathbf{x})}$. We will henceforth refer to $\hyperref[eq:qx]{\mathcal{Q}(\mathbf{x})}$ as the separation problem. For ease of notation, we use $\mathcal{Q}(\mathbf{x})$ to refer to both the problem and its optimal value. In this two-stage problem, the adversary observes the first-stage decision $\mathbf{x}$ and reveals the worst-case scenario of $\mathbf{h} \in \mathcal{U}$. Then, the decision maker selects a second-stage recourse decision $\mathbf{y}$ such that $\mathbf{B}\mathbf{y}$ covers $\mathbf{h} - \mathbf{A}\mathbf{x}$. The goal is to select a first-stage decision such that the total cost in the worst-case is minimized. This model has been widely considered in the literature (Dhamdhere et al. \cite{Dhamdhere2005}, Feige et al. \cite{feige2007RCES}, Gupta et al. \cite{Gupta2009}, Bertsimas and Goyal \cite{Bertsimas2012OnTP}, Bertsimas and Bidkhori \cite{bertsimas2015geometric}, Bertsimas and de Ruiter \cite{Bertsimas2016duality}, Xu et al. \cite{xu2018copositive}, Zhen et al. \cite{zhen2018}, El Housni and Goyal \cite{GoyalElHoussni2017PAP}, El Housni et al. \cite{housni2020facilitylocation,matchingdrivers}), and has many applications including set cover, capacity planning and network design problems under uncertain demand. 

Several uncertainty sets have been considered in the literature including polyhedral uncertainty sets, ellipsoids and norm balls (see Bertsimas et al. \cite{bertsimas2010survey}). Some of the most important uncertainty sets are budget of uncertainty sets (Bertsimas and Sim \cite{Bertsimas2004priceofrobustness}, Gupta et al. \cite{gupta2011knapsack}, El Housni and Goyal \cite{housni2021affine}) and intersections of budget of uncertainty sets such as CLT sets (see Bandi and Bertsimas \cite{Bertsimas2012CLT}) and inclusion-constrained budgeted sets (see Gounaris et al. \cite{2014inclusionConstrainedBudgets}).  These have been widely used in practice. Following this motivation, we consider in this paper the following uncertainty set
$$\mathcal{U} := \{\mathbf{h} \geq \mathbf{0} \mid \mathbf{R} \mathbf{h} \leq \mathbf{r}\},$$ where $\mathbf{R} \in \mathbb{R}_+^{L \times m}$ and $\mathbf{r} \in \mathbb{R}_+^L$. This is a generalization of the previously mentioned sets. We refer to this as a packing uncertainty set.

Feige et al. \cite{feige2007RCES} show that \ref{eq:ar} is NP-hard to approximate within any factor better than $\Omega(\frac{\log n}{\log \log n})$ even in the special case of a single budget of uncertainty set. Bertsimas and Goyal \cite{Bertsimas2012OnTP} give an $O(\sqrt{m})$-approximation in the case where the first-stage matrix $\mathbf{A}$ is non-negative. Recently, El Housni and Goyal \cite{housni2021affine} give an $O(\frac{\log n}{\log \log n})$-approximation in the case of a single budget of uncertainty set and an $O(\frac{\log^2 n}{\log \log n})$-approximation in the case of an intersection of disjoint budgeted sets. In general, they show an $O(\frac{L \log n}{\log \log n})$-approximation in the case of a packing uncertainty set with $L$ constraints. However, this bound scales linearly with $L$. The two-stage robust covering problem was also considered in the discrete case where the variables of the problem are restricted to be in $\{0,1\}^m$. For this problem, Feige et al. \cite{feige2007RCES} and Gupta et al. \cite{Gupta2009} give an $O(\log n \log m)$-approximation and an $O(\log n + \log m)$-approximation respectively in the case where $\mathbf{A}=\mathbf{B} \in \{0,1\}^{m \times n}$ and the uncertainty set $\mathcal{U}$ is given by a cardinality uncertainty set of the form $\mathcal{U} = \{\mathbf{h} \in \{0,1\}^m \mid \sum_{i=1}^m h_i \leq k\}$. Gupta et al. \cite{gupta2011knapsack} consider a more general uncertainty set, namely, intersection of \textit{p-system} and \textit{q-knapsack} and give an $O(pq\log n)$-approximation of the two-stage problem.

The goal of this paper is to provide LP-based approximation algorithms with provable guarantees for the packing disjoint bilinear program as well as the two-stage adjustable robust problem that improve over the approximation bounds known for these problems.

\subsection{Our Contributions}

\subsubsection{A Polylogarithmic Approximation Algorithm for PDB.}

\vspace{2mm}
\noindent \textbf{Algorithm.}
We present an LP based randomized approximation algorithm for \ref{eq:pdb}. Our algorithm relies on a new idea that might be of independent interest. In particular, we show the existence of a near-optimal near-integral solution of this problem. That is, a near-optimal solution $(\Tilde{\mathbf{x}}, \Tilde{\mathbf{y}})$ such that $\Tilde{x}_i \in \{0, \underset{\mathbf{x} \in \mathcal{X}}{\max} \; x_i / \zeta_1 \}$ and $\Tilde{y}_i \in \{0, \underset{\mathbf{y} \in \mathcal{Y}}{\max} \; y_i / \zeta_2\}$ for some small factors $\zeta_1$ and $\zeta_2$. We give an LP relaxation of \ref{eq:pdb}, i.e., a linear program whose optimal cost is greater than the optimum of \ref{eq:pdb}, from which we obtain such $(\Tilde{\mathbf{x}}, \Tilde{\mathbf{y}})$ via randomized rounding. More specifically, we show the following theorem,
\begin{theorem}
\label{th:pdb}
There exists an LP rounding based randomized algorithm  that gives an $O(\frac{\log \log m_1}{\log m_1}\frac{\log \log m_2}{\log m_2})$-approximation to  \ref{eq:pdb}.
\end{theorem}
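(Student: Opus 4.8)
The plan is to prove the statement by building a compact linear relaxation of \ref{eq:pdb} whose optimum is at least $z_{\sf{PDB}}$, and then rounding its optimal solution into a feasible \emph{near-integral} pair whose objective value is within the claimed factor of $z_{\sf{PDB}}$; this simultaneously establishes the near-optimal near-integral solution advertised in the introduction. First I would preprocess. After discarding vacuous constraints and substituting out any forced-zero variables (so that $\mathbf{p},\mathbf{q}>\mathbf{0}$), I would compute $u_i:=\max_{\mathbf{x}\in\mathcal{X}}x_i$ and $v_i:=\max_{\mathbf{y}\in\mathcal{Y}}y_i$ by solving $2n$ linear programs and drop every coordinate with $u_iv_i=0$; since $\mathcal{X},\mathcal{Y}$ are bounded these quantities are finite (a vanishing column of $\mathbf{P}$ or $\mathbf{Q}$ with a positive matching coordinate on the other side is the easily detected unbounded case). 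Rescaling the rows of $(\mathbf{P},\mathbf{p})$ and $(\mathbf{Q},\mathbf{q})$, I may assume $\mathbf{p}=\mathbf{q}=\mathbf{1}$; packing-ness (nonnegativity of $\mathbf{P},\mathbf{Q}$ and of the variables) then gives the inequalities $P_{ji}u_i\le 1$ and $Q_{ji}v_i\le 1$ for all $i,j$, which are the key to the concentration step below.

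Let $\zeta_1=\Theta\!\big(\frac{\log m_1}{\log\log m_1}\big)$ and $\zeta_2=\Theta\!\big(\frac{\log m_2}{\log\log m_2}\big)$ (read as constants when $m_1$ or $m_2$ is bounded), and consider the relaxation
\begin{equation*}
\begin{aligned}
z_{\sf{LP}} \;=\; \max_{\mathbf{0}\le\mathbf{z}\le\mathbf{1}} \quad & \textstyle\sum_i u_iv_i z_i\\
& \textstyle\sum_i P_{ji}u_i z_i\le 1, \quad j\in[m_1],\\
& \textstyle\sum_i Q_{ji}v_i z_i\le 1, \quad j\in[m_2],
\end{aligned}
\end{equation*}
a bounded LP with $n$ variables and $m_1+m_2+2n$ constraints, hence solvable in polynomial time. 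I would first check $z_{\sf{LP}}\ge z_{\sf{PDB}}$: if $(\mathbf{x}^\star,\mathbf{y}^\star)$ is optimal for \ref{eq:pdb}, then $z_i:=\min(x_i^\star/u_i,\,y_i^\star/v_i)\in[0,1]$ is feasible, since $u_iz_i\le x_i^\star$ gives $\sum_i P_{ji}u_i z_i\le\sum_i P_{ji}x_i^\star\le 1$ and likewise for $\mathbf{Q}$, while $\sum_i u_iv_iz_i=\sum_i\min(v_ix_i^\star,\,u_iy_i^\star)\ge\sum_i x_i^\star y_i^\star$ because $v_i\ge y_i^\star$ and $u_i\ge x_i^\star$. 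I would also record $z_{\sf{LP}}\ge\max_i u_iv_i$ (take $\mathbf{z}=\mathbf{e}_i$), used in the rounding analysis and also a trivial fallback solution $(u_i\mathbf{e}_i,\,v_i\mathbf{e}_i)$.

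The rounding step: solve the relaxation for $\mathbf{z}^\star$, form $S\subseteq[n]$ by placing each $i$ in $S$ independently with probability $z_i^\star$, and output $\hat{x}_i:=(u_i/\zeta_1)\mathbf{1}[i\in S]$ and $\hat{y}_i:=(v_i/\zeta_2)\mathbf{1}[i\in S]$, which is near-integral in exactly the sense of the introduction. Feasibility $\hat{\mathbf{x}}\in\mathcal{X}$ is the event $\sum_{i\in S}P_{ji}u_i\le\zeta_1$ for all $j\in[m_1]$; the left-hand side is a sum of independent $[0,1]$-valued terms of mean at most $1$, so a multiplicative Chernoff bound gives per-constraint failure probability at most $(e/\zeta_1)^{\zeta_1}$, which for $\zeta_1=\Theta(\log m_1/\log\log m_1)$ is below $1/(100\,m_1)$, and a union bound over the $m_1$ constraints (and the symmetric argument for $\hat{\mathbf{y}}$ and the $m_2$ constraints) makes $(\hat{\mathbf{x}},\hat{\mathbf{y}})$ feasible with probability at least $0.98$. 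For the objective, $\hat{\mathbf{x}}^T\hat{\mathbf{y}}=(\zeta_1\zeta_2)^{-1}\sum_{i\in S}u_iv_i$ has mean $(\zeta_1\zeta_2)^{-1}z_{\sf{LP}}$ and, since each $u_iv_i\le z_{\sf{LP}}$, a lower Chernoff bound gives $\hat{\mathbf{x}}^T\hat{\mathbf{y}}\ge\frac12(\zeta_1\zeta_2)^{-1}z_{\sf{LP}}$ with probability at least a constant. Intersecting the two events yields, with constant probability, a feasible pair with $\hat{\mathbf{x}}^T\hat{\mathbf{y}}\ge\frac12(\zeta_1\zeta_2)^{-1}z_{\sf{PDB}}=\Omega\!\big(\frac{\log\log m_1}{\log m_1}\frac{\log\log m_2}{\log m_2}\big)z_{\sf{PDB}}$; repeating a constant number of times and returning the best pair found (or the fallback) gives the claimed randomized approximation algorithm and, in particular, proves the existence of a near-optimal near-integral solution.

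The step I expect to be the main obstacle is the concentration analysis: it is the Poisson-type tail $(e/\zeta_1)^{\zeta_1}$, rather than the weaker $e^{-\Omega(\zeta_1)}$ that a crude estimate would give, which lets a slack factor as small as $\zeta_1=\Theta(\log m_1/\log\log m_1)$ absorb the union bound over all $m_1$ packing constraints---this is exactly what fixes the form of the approximation ratio. Two smaller points still need care: ensuring that the ``feasible'' and ``large objective'' events co-occur with constant probability (handled by a constant boost of $\zeta_1,\zeta_2$, by using $z_{\sf{LP}}\ge\max_i u_iv_i$ so the objective is already well concentrated, and by the single-coordinate fallback for bounded $m_1,m_2$), and the degenerate cases of unbounded or identically-zero coordinates, which are disposed of in preprocessing.
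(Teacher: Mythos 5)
Your proposal is correct and follows essentially the same route as the paper: the same LP relaxation in the variables $\omega_i$ (your $z_i$) with coefficients $\theta_i=\max_{\mathbf{x}\in\mathcal{X}}x_i$ and $\gamma_i=\max_{\mathbf{y}\in\mathcal{Y}}y_i$, the same Bernoulli rounding scaled down by $\zeta_1,\zeta_2$, the same Poisson-type Chernoff tail plus union bound for feasibility, the same use of $\theta_i\gamma_i\le z_{\sf{LP}}$ for the lower-tail concentration of the objective, and repetition to boost the success probability. The only differences are cosmetic (your $\min(x_i^\star/u_i,\,y_i^\star/v_i)$ versus the paper's product $\frac{x_i^\star}{\theta_i}\frac{y_i^\star}{\gamma_i}$ in the relaxation proof, and your explicit fallback for small $m_1,m_2$ where the paper simply assumes $m_1,m_2$ large enough).
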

\vspace{2mm}
\noindent \textbf{Relation to the Reformulation Linearization Technique (RLT).} We show that our LP relaxation of \ref{eq:pdb} is closely related to the reformulation linearization technique (RLT). RLT provides an efficient approximation for non-convex continuous and mixed-integer optimization problems. It was first introduced by Sherali and Adams \cite{adams1986tight,adams1990linearization,adams1993mixed} in the context of binary bilinear problems and has been since then applied to different other problems including general bilinear problems (Sherali and Alameddine \cite{sherali1992new}), mixed-integer linear problems (Sherali and Adams \cite{sherali1994hierarchy,sherali2009reformulation}) and polynomial problems (Sherali and Tuncbilek \cite{sherali1992global}).
We show the existence of a reformulation linearization of \ref{eq:pdb} that is equivalent to our LP relaxation. This provides a new perspective on our LP relaxation and implies a polylogarithmic approximation guarantee on the performance of tighter relaxations of \ref{eq:pdb} such as the well studied relaxations of the RLT hierarchy (Sherali and Adams \cite{sherali1990hierarchy,sherali1994hierarchy}).

\vspace{2mm}
\noindent \textbf{Numerical Experiments.} Our randomized rounding based algorithm gives an approximate solution of \ref{eq:pdb} that is guaranteed to be within $O(\frac{\log m_1}{\log \log m_1} \frac{\log m_2}{\log \log m_2})$ of the optimum with high probability. We study the empirical performance of our solution by comparing the performance of our algorithm with several benchmarks on randomly generated instances. More specifically, we compare our algorithm with the first level relaxation of the RLT hierarchy, which is a widely used LP approximation for bilinear programs and that has been observed to be a good empirical approximation (Sherali et al. \cite{sherali2000reduced}, Sherali and Adams \cite{adams1986tight,adams1993mixed}, Sherali and Alameddine \cite{sherali1992new}). This relaxation gives an approximate solution of \ref{eq:pdb} that we compare with our solution in terms of objective value and running time needed to compute each solution. We also compare our algorithm to the bilinear solver of Gurobi v9.1.2. In particular, we compare the objective value of our solution to the optimal objective computed using the bilinear solver of Gurobi and 
compare the running time of our algorithm with the running time needed by the bilinear solver of Gurobi to compute a solution that is at least as good as our solution. We show that our solution is significantly faster to compute compared to these benchmarks and gives a good approximation of \ref{eq:pdb}.

\subsubsection{A Polylogarithmic Approximation for the Two-Stage Problem \ref{eq:ar}.}
\vspace{2mm}
\noindent \textbf{Algorithm.} We present an LP-based approximation for \ref{eq:ar}. The separation problem for \ref{eq:ar} is a variant of \ref{eq:pdb}. However, the objective is a difference of a bilinear and a linear term making it challenging to approximate. Our approach approximates \ref{eq:ar} directly. In particular, using ideas from our approximation of \ref{eq:pdb}, we give a compact linear restriction of \ref{eq:ar}, that is, a linear program whose optimal objective is greater than the optimum of \ref{eq:ar}, and show that it is a polylogarithmic approximation of \ref{eq:ar}. In particular, we have the following theorem.

\begin{theorem}
\label{th:ar}
There exists an LP restriction of \ref{eq:ar} that gives an $O(\frac{\log n}{\log \log n}\frac{\log L}{\log \log L})$ approximation to \ref{eq:ar}.
\end{theorem}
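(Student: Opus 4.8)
The plan is to follow the familiar "guess–and–cover" template for two-stage robust covering, but to replace the usual $\Theta(\log n \log L)$-type analysis with the near-integral structure established for \ref{eq:pdb}. First I would observe that the separation problem $\mathcal{Q}(\mathbf{x})$, after dualizing the inner minimization (which is an LP because $\mathbf{y}\ge \mathbf0$, $\mathbf{B}\mathbf{y}\ge \mathbf h - \mathbf{A}\mathbf{x}$), becomes $\max\{(\mathbf h - \mathbf{A}\mathbf{x})^T\mathbf w \mid \mathbf B^T\mathbf w\le \mathbf d,\ \mathbf w\ge \mathbf0,\ \mathbf h\in\mathcal U\}$, i.e.\ a disjoint bilinear program in $(\mathbf h,\mathbf w)$ over the packing set $\mathcal U=\{\mathbf h\ge\mathbf0\mid \mathbf R\mathbf h\le\mathbf r\}$ and the polytope $\{\mathbf w\ge\mathbf0\mid \mathbf B^T\mathbf w\le\mathbf d\}$, minus the linear term $(\mathbf A\mathbf x)^T\mathbf w$. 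The key structural point borrowed from the analysis behind Theorem~\ref{th:pdb} is that a near-optimal adversary response $\mathbf h$ can be taken ``near-integral'': supported on few coordinates, each set to (a $\zeta$-fraction of) its coordinatewise maximum in $\mathcal U$. Concretely, I would argue that there is a collection of ``canonical'' scenarios $\mathbf h^{(S)}$ — indexed by small subsets $S$ of the $m$ rows, with $h^{(S)}_i = (\max_{\mathbf h\in\mathcal U} h_i)/\zeta_2$ for $i\in S$ — such that $\mathcal Q(\mathbf x)$ is within an $O\!\left(\frac{\log n}{\log\log n}\frac{\log L}{\log\log L}\right)$ factor of $\max_S \min_{\mathbf y\ge\mathbf0}\{\mathbf d^T\mathbf y\mid \mathbf B\mathbf y\ge \mathbf h^{(S)}-\mathbf A\mathbf x\}$, where the $\log L/\log\log L$ factor is the ``price'' of restricting $\mathbf h$ to be near-integral inside an $L$-constraint packing set and the $\log n/\log\log n$ factor is the price of restricting the adversary's support (this is where the $\Omega(\log n/\log\log n)$ hardness of Feige et al.\ is matched).

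Next I would build the LP restriction of \ref{eq:ar}. Rather than writing a separate recourse $\mathbf y$ for each of the exponentially many canonical scenarios, I would use the standard trick of introducing a single ``static-like'' second-stage variable that must simultaneously cover the scaled coordinate maxima: let $\mathbf h^{\max}$ denote the vector with $h^{\max}_i=\max_{\mathbf h\in\mathcal U}h_i$, and force $\mathbf y$ to cover $\frac{1}{\zeta_2}\mathbf h^{\max}$ on each coordinate, but only ``pay'' for a $\beta$-fraction of it where $\beta$ accounts for how many coordinates a single canonical scenario can activate relative to the budget $\mathbf r$. The resulting program is
\begin{equation*}
\begin{aligned}
\min_{\mathbf x,\mathbf y}\ \ & \mathbf c^T\mathbf x + \beta\,\mathbf d^T\mathbf y\\
& \mathbf A\mathbf x + \mathbf B\mathbf y \ \ge\ \tfrac{1}{\zeta_2}\,\mathbf h^{\max},\\
& \mathbf x\in\mathcal X,\ \ \mathbf y\ge\mathbf0,
\end{aligned}
\end{equation*}
which is a compact LP (its size is polynomial: $\mathbf h^{\max}$ is computed by $m$ LPs over $\mathcal U$). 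I would then prove the two directions. \emph{Restriction (upper feasibility):} any $(\mathbf x,\mathbf y)$ feasible for this LP yields a feasible solution of \ref{eq:ar} with $t = \beta\zeta_2\cdot\mathbf d^T\mathbf y$ or similar, because for every canonical scenario $\mathbf h^{(S)}$ the vector $\zeta_2\mathbf y$ already covers $\mathbf h^{(S)}-\mathbf A\mathbf x$ coordinatewise while the budget structure limits the recourse cost, and every $\mathbf h\in\mathcal U$ is dominated (up to the polylog factor) by a combination of canonical scenarios. \emph{Approximation (cost bound):} take an optimal $(\mathbf x^\star,t^\star)$ of \ref{eq:ar} and construct a feasible point of the LP of cost $O\!\left(\frac{\log n}{\log\log n}\frac{\log L}{\log\log L}\right)\cdot z_{\sf AR}$ by letting $\mathbf y$ be a suitably scaled aggregate of the optimal recourse decisions $\mathbf y(\mathbf h)$ over the canonical scenarios — here the randomized-rounding argument underlying Theorem~\ref{th:pdb} is invoked to show that such an aggregate exists with the claimed cost blow-up, and the choice $\zeta_2=\Theta(\log L/\log\log L)$, together with a matching $\zeta_1=\Theta(\log n/\log\log n)$ coming from the covering/support side, calibrates the two losses.

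The main obstacle, and the place where the argument needs the most care, is the handling of the \emph{linear term} $-(\mathbf A\mathbf x)^T\mathbf w$ in the bilinear reformulation: unlike pure \ref{eq:pdb}, here the adversary's value is a difference of a bilinear and a linear form, so the near-integral reduction cannot be quoted verbatim from Theorem~\ref{th:pdb} — one has to show that rounding the adversary's $\mathbf h$ to a near-integral scenario does not destroy too much value \emph{after} subtracting the first-stage contribution, which requires controlling the interaction between the support chosen by the adversary and the coordinates where $\mathbf A\mathbf x$ is already large. I would address this by partitioning the coordinates into those already ``well-covered'' by $\mathbf A\mathbf x^\star$ (where the adversary gains nothing) and the rest (where the pure-packing analysis of \ref{eq:pdb} applies to the residual demand), and by noting that $\mathcal X$ being a cone lets us scale $\mathbf x$ freely so that the two parts are balanced. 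A secondary technical point is verifying that the LP restriction is genuinely a restriction (its optimum is an \emph{upper} bound on $z_{\sf AR}$), which amounts to checking that the single aggregated recourse $\mathbf y$ can be ``unpacked'' into a valid scenario-dependent policy; this follows from the packing (downward-closed) structure of $\mathcal U$ together with the fact that $\mathbf B\ge\mathbf0$, so covering the coordinatewise maxima covers every scenario simultaneously.
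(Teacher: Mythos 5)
Your reduction of the separation problem to a disjoint bilinear form, and your identification of the linear term $-(\mathbf{A}\mathbf{x})^T\mathbf{w}$ as the main difficulty, both match the paper's starting point. The genuine gap is in the LP restriction you actually write down, and no calibration of your $\beta$ and $\zeta_2$ can repair it. Requiring a single static recourse $\mathbf{y}$ to satisfy $\mathbf{A}\mathbf{x}+\mathbf{B}\mathbf{y}\geq \frac{1}{\zeta_2}\mathbf{h}^{\max}$ discards the packing structure $\mathbf{R}\mathbf{h}\leq\mathbf{r}$ of $\mathcal{U}$: the coordinatewise maxima $\gamma_i=\max_{\mathbf{h}\in\mathcal{U}}h_i$ are in general not simultaneously attainable, and covering all of them can cost $\Omega(m)$ times the worst single scenario. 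Concretely, take $\mathbf{A}=\mathbf{0}$, $\mathbf{B}=\mathbf{I}$, $\mathbf{d}=\mathbf{e}$ and $\mathcal{U}=\{\mathbf{h}\in[0,1]^m\mid \sum_i h_i\leq 1\}$; then $z_{\sf{AR}}=1$ while $\mathbf{h}^{\max}=\mathbf{e}$, so any feasible $\mathbf{y}$ in your program has $\mathbf{d}^T\mathbf{y}\geq m/\zeta_2$ and your LP value is $\Omega(\beta m/\zeta_2)$. Choosing $\beta$ small enough to close this gap would have to depend on the instance and would break the restriction property (the upper bound on $z_{\sf{AR}}$) on other instances, e.g.\ when $\mathcal{U}$ is essentially the full box; so the two requirements cannot be met simultaneously. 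A scalar discount on the objective is not a substitute for letting the LP know \emph{which} coordinates the adversary can afford to activate.

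The paper avoids this by never collapsing the adversary to $\mathbf{h}^{\max}$. It defines an LP relaxation $\mathcal{Q}^{\sf{LP}}(\mathbf{x},\mathbf{y}_0)$ of the dualized separation problem in which the uncertainty-set constraints survive as $\sum_i \gamma_i\mathbf{R}_i\omega_i\leq\mathbf{r}$; taking the dual of this maximization LP and substituting into \ref{eq:ar} yields \ref{eq:lp-ar}, whose covering constraints $\theta_i\mathbf{a}_i^T\mathbf{x}+\theta_i\mathbf{b}_i^T(\mathbf{y}_0+\mathbf{y})+\gamma_i\mathbf{R}_i^T\bm{\alpha}\geq\theta_i\gamma_i$ contain dual variables $\bm{\alpha}\geq\mathbf{0}$ priced at $\mathbf{r}^T\bm{\alpha}$. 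This is exactly the mechanism your LP is missing: a coordinate may be ``covered'' by charging it against the budget of $\mathcal{U}$ rather than by an actual first- or second-stage decision (in the example above, setting the dual of the budget row to $1$ satisfies every constraint at total cost $1$). The near-integral/randomized-rounding machinery you invoke is then used only to sandwich $\mathcal{Q}(\mathbf{x})$ between $\frac{1}{2\eta\beta}\mathcal{Q}^{\sf{LP}}(\beta\mathbf{x},\beta\mathbf{y}_0)$ and $\mathcal{Q}^{\sf{LP}}(\mathbf{x},\mathbf{y}_0)+\mathbf{d}^T\mathbf{y}_0$ as in Lemma~\ref{lm:qx} (where $\beta$ denotes $\Theta(\log L/\log\log L)$, not a cost discount), and not to justify a static cover of $\mathbf{h}^{\max}$. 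Finally, your instinct to exploit non-negativity when handling the linear term is also how the paper proceeds, but it does so by introducing an explicit static second-stage variable $\mathbf{y}_0$ with $\mathbf{A}\mathbf{x}+\mathbf{B}\mathbf{y}_0\geq\mathbf{0}$, whose cost is charged to $z_{\sf{AR}}$ via the feasibility of the scenario $\mathbf{0}\in\mathcal{U}$, rather than by a coordinate partition and rescaling.
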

\noindent Our bound improves significantly over the prior  approximation bound of $O(\frac{L\log n}{\log \log n})$ \cite{housni2021affine} known for this problem. It also shows a striking contrast between the fractional two-stage robust covering problem and its discrete counter part. In fact, the discrete two-stage robust covering problem under \textit{L-knapsack} uncertainty set considered in \cite{gupta2011knapsack} is hard to approximate within any factor better than $L^{\frac{1}{2}-\epsilon}$, for any $\epsilon > 0$. This follows from the hardness of the maximum independent set problem.

\vspace{2mm}
\noindent \textbf{Relation to Affine Policies.} We show that, surprisingly, our LP restriction is tightly related to {\em affine policies}. Affine policies are a widely used approximation technique in dynamic robust optimization. They consist of restricting the second-stage variables $\mathbf{y}$ to be an affine function of the uncertain right-hand side $\mathbf{h}$ (see for example Ben-Tal et al. \cite{bental2004}). It is known that the optimal affine policy can be computed in polynomial time (Ben-Tal et al. \cite{bental2004}). Several approximation bounds are known for affine policies. Bertsimas and Goyal \cite{Bertsimas2012OnTP} show that affine policies achieve a bound of $O(\sqrt{m})$ under a general polyhedral uncertainty set. They also show that such policies are optimal in the case of a simplex uncertainty set. Recently, El Housni and Goyal \cite{housni2021affine} show that affine policies achieve the bound of $O(\frac{\log n}{\log \log n})$ in the case of a single budget of uncertainty set. They also show a bound of $O(L\frac{\log n}{\log \log n})$ in the case of a general intersection of $L$ budget of uncertainty sets. In this paper, we prove stronger bounds for affine policies under the general packing uncertainty set with $L$ constraints. In particular, we show that our LP restriction of \ref{eq:ar} gives a feasible affine policy for the two-stage problem with the same (or better) objective value. This implies the following approximation bound for affine policies.
\begin{theorem}
\label{th:AFF}
Affine policies give an $O(\frac{\log n}{\log \log n}\frac{\log L}{\log \log L})$-approximation to \ref{eq:ar}.
\end{theorem}
\noindent Our analysis is constructive and provides a faster algorithm to compute near-optimal affine policies with approximation ratio $O(\frac{\log n}{\log \log n}\frac{\log L}{\log \log L})$.

\vspace{2mm}
\noindent \textbf{Numerical Experiments.}
We compare the performance of our LP restriction of \ref{eq:ar} with several benchmarks on randomly generated instances. First, we compare our restriction with the optimal affine policy, which is a widely used approximation for the two-stage problem. Second, we compare our restriction with a generalization of the algorithm of El Housni and Goyal \cite{housni2021affine} to packing uncertainty sets. This algorithm was shown to have a good empirical performance for the case of a single budget of uncertainty set. 
We also compare our LP restriction with the lower-bound of Hadjiyiannis et al. \cite{hadjiyiannis2011scenario} who show the bound provides a good empirical approximation of the optimum of \ref{eq:ar}. We show that our restriction is significantly faster to compute compared to these benchmarks and gives a good approximation of \ref{eq:ar}.


\section{A Polylogarithmic Approximation for PDB}
\label{section:pdb}

In this section, we present an $O(\frac{\log \log m_1}{\log m_1}\frac{\log \log m_2}{\log m_2})$-approximation  for \ref{eq:pdb} (Theorem~\ref{th:pdb}). To prove this theorem, we show an interesting structural property of 
\ref{eq:pdb}. In particular, we show that there exists a near-optimal solution of \ref{eq:pdb} that is ``near-integral''.  Let us define
for all $i \in [n]$, $$ \theta_i = \max_{\mathbf{x} \in \mathcal{X}} x_i, \quad  \gamma_i = \max_{\mathbf{y} \in \mathcal{Y}} y_i, \quad \zeta^*_1 = \frac{3\log m_1}{\log\log m_1 } + 2 \quad \text{and} \quad \zeta^*_2 = \frac{3\log m_2}{\log\log m_2} + 2.$$ We formally state our  structural property in the following lemma.

\begin{lemma} {\normalfont (\textbf{Structural Property})}.
\label{lm:structural-property-pdb}
There exists a feasible solution $(\Tilde{\mathbf{x}}, \Tilde{\mathbf{y}})$ of \ref{eq:pdb} whose objective value is within $O(\frac{\log \log m_1}{\log m_1}\frac{\log \log m_2}{\log m_2})$ of the optimum and such that $\Tilde{x}_i \in \{0, \frac{\theta_i}{\zeta_1}\}$ and $\Tilde{y}_i \in \{0, \frac{\gamma_i}{\zeta_2}\}$ for all $i \in [n]$, where $1 \leq \zeta_1 \leq \zeta^*_1$ and  $1 \leq \zeta_2 \leq \zeta^*_2$.
\end{lemma}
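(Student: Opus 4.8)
The plan is to argue existence by the probabilistic method: take an optimal solution $(\mathbf{x}^*,\mathbf{y}^*)$ of \ref{eq:pdb} (if $z_{\sf{PDB}}=0$ the all-zero solution already works, so assume $z_{\sf{PDB}}>0$) and build $(\hat{\mathbf{x}},\hat{\mathbf{y}})$ by \emph{independent randomized rounding}. For each $i\in[n]$ draw independent Bernoulli variables $u_i$ with mean $x_i^*/\theta_i$ and $v_i$ with mean $y_i^*/\gamma_i$ (legitimate probabilities since $x_i^*\le\theta_i$ and $y_i^*\le\gamma_i$; coordinates with $\theta_i=0$ or $\gamma_i=0$ are irrelevant), and set $\hat{x}_i=\theta_i u_i/\zeta_1$ and $\hat{y}_i=\gamma_i v_i/\zeta_2$. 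Then $\hat{x}_i\in\{0,\theta_i/\zeta_1\}$ and $\hat{y}_i\in\{0,\gamma_i/\zeta_2\}$ automatically, so everything reduces to showing that with positive probability $(\hat{\mathbf{x}},\hat{\mathbf{y}})$ is feasible and has value $\Omega\!\big(\tfrac{\log\log m_1}{\log m_1}\tfrac{\log\log m_2}{\log m_2}\big)\,z_{\sf{PDB}}$.

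First, the objective: by independence, $\mathbb{E}[\hat{\mathbf{x}}^T\hat{\mathbf{y}}]=\frac{1}{\zeta_1\zeta_2}\sum_i\theta_i\gamma_i\cdot\frac{x_i^*}{\theta_i}\cdot\frac{y_i^*}{\gamma_i}=\frac{z_{\sf{PDB}}}{\zeta_1\zeta_2}$, and $\frac{1}{\zeta_1\zeta_2}=\Theta\!\big(\tfrac{\log\log m_1}{\log m_1}\tfrac{\log\log m_2}{\log m_2}\big)$. Next, feasibility: normalizing the $j$-th packing constraint of $\mathcal{X}$ by $p_j$ and using that $P_{ji}\theta_i\le p_j$ (this constraint alone forces $x_i\le p_j/P_{ji}$), the coefficients $\tilde{P}_{ji}:=P_{ji}\theta_i/p_j$ lie in $[0,1]$, $\hat{\mathbf{x}}\in\mathcal{X}$ is equivalent to $\sum_i\tilde{P}_{ji}u_i\le\zeta_1$ for all $j\in[m_1]$, and $\sum_i\tilde{P}_{ji}\,\mathbb{E}[u_i]=\frac1{p_j}\sum_iP_{ji}x_i^*\le1$. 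Since the $\tilde{P}_{ji}$ are in $[0,1]$ and the mean is at most $1$, a Chernoff bound gives $\Pr\big[\sum_i\tilde{P}_{ji}u_i\ge\zeta_1\big]\le (e/\zeta_1)^{\zeta_1}$; the choice $\zeta_1=\frac{2\log m_1}{\log\log m_1}+2$ is exactly what makes $(e/\zeta_1)^{\zeta_1}\le\frac1{4m_1}$, and conditioning on $u_{i_0}=1$ only adds $\tilde{P}_{j i_0}\le1$ to the sum, which the ``$+2$'' absorbs. The analogous statement holds for $\mathcal{Y}$ with $\zeta_2$.

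To glue these bounds together I would let $A$ (resp. $B$) be the event that all $\mathcal{X}$- (resp. $\mathcal{Y}$-) constraints hold, $G=A\cap B$ the feasibility event, and $W=\sum_i\theta_i\gamma_iu_iv_i=\zeta_1\zeta_2\,\hat{\mathbf{x}}^T\hat{\mathbf{y}}$. The key point is that $v$ is independent of $(u,A)$, so $\mathbb{E}[W\mathbf{1}_{A^c}]=\sum_ix_i^*y_i^*\,\Pr[A^c\mid u_i=1]\le z_{\sf{PDB}}/4$ after a union bound over the $m_1$ constraints, and symmetrically $\mathbb{E}[W\mathbf{1}_{B^c}]\le z_{\sf{PDB}}/4$; hence $\mathbb{E}[W\mathbf{1}_G]\ge\mathbb{E}[W]-\mathbb{E}[W\mathbf{1}_{A^c}]-\mathbb{E}[W\mathbf{1}_{B^c}]\ge z_{\sf{PDB}}/2$. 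Since $W\le\sum_i\theta_i\gamma_i$ deterministically, a reverse-Markov argument ($\mathbb{E}[W\mathbf{1}_G]\le z_{\sf{PDB}}/4+(\sum_i\theta_i\gamma_i)\Pr[G\cap\{W\ge z_{\sf{PDB}}/4\}]$) forces $\Pr[G\cap\{W\ge z_{\sf{PDB}}/4\}]>0$. Any outcome in that event is a feasible near-integral $(\hat{\mathbf{x}},\hat{\mathbf{y}})$ with $\hat{\mathbf{x}}^T\hat{\mathbf{y}}\ge z_{\sf{PDB}}/(4\zeta_1\zeta_2)$, which is the claimed bound.

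I expect the crux to be the calibration of $\zeta_1,\zeta_2$: one needs a single logarithmic factor that is small enough to retain a $\Theta(1/(\zeta_1\zeta_2))$ fraction of the optimum yet large enough that the Chernoff tail, even after a union bound over all constraints and after conditioning on one coordinate being set, stays below a constant. Concretely this reduces to checking an inequality of the form $\zeta_1(\log\zeta_1-1)\ge\log(4m_1)$, which holds in the regime $\zeta_1\sim\frac{2\log m_1}{\log\log m_1}$ and can be ensured for the remaining small values of $m_1$ through the hidden constant in the $O(\cdot)$. A secondary subtlety worth care is that one \emph{cannot} bound $\mathbb{E}[W\mathbf{1}_{G^c}]$ via $\Pr[G^c]$ alone (the deterministic bound $W\le\sum_i\theta_i\gamma_i$ is typically far larger than $z_{\sf{PDB}}$); it is precisely the conditional probability $\Pr[A^c\mid u_i=1]$ that ties the ``wasted'' objective mass back to $z_{\sf{PDB}}$.
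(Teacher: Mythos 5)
Your argument is correct, but it takes a genuinely different route from the paper's. The paper does not round the (unknown) optimal solution of \ref{eq:pdb}; it rounds an optimal solution $\bm{\omega}^*$ of the LP relaxation \ref{eq:lp-pdb}, using a \emph{single} Bernoulli $\Tilde{\omega}_i$ with mean $\omega_i^*$ shared by $\hat{x}_i$ and $\hat{y}_i$, whereas you round $(\mathbf{x}^*,\mathbf{y}^*)$ itself with two independent families $u_i,v_i$. Because of the coupling $\hat{x}_i\hat{y}_i\propto\Tilde{\omega}_i^2=\Tilde{\omega}_i$, the paper's objective is a linear form in independent Bernoullis and is handled by a lower-tail Chernoff bound with $\delta=1/2$, which requires normalizing the coefficients by $z_{\sf{LP-PDB}}$ and proving $\theta_i\gamma_i\leq z_{\sf{LP-PDB}}$ via feasibility of the unit vectors $\mathbf{e}_i$; your objective $W=\sum_i\theta_i\gamma_iu_iv_i$ is a sum of products, and you replace the concentration step by the first-moment identity $\mathbb{E}[W\mathbf{1}_{A^c}]=\sum_ix_i^*y_i^*\,\mathbb{P}(A^c\mid u_i=1)$ plus a reverse-Markov argument, which neatly avoids any normalization of the coefficients. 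The feasibility analysis and the calibration of $\zeta_1,\zeta_2$ are essentially identical in both proofs. What the paper's route buys is precisely what yours gives up: it is constructive (one can solve \ref{eq:lp-pdb} in polynomial time but not \ref{eq:pdb}), and it yields a \emph{constant} success probability per trial, which Algorithm~\ref{alg:rounding-pdb} amplifies to $1-\epsilon-o(1)$ for Theorem~\ref{th:pdb}; your reverse-Markov step only guarantees $\mathbb{P}(G\cap\{W\geq z_{\sf{PDB}}/4\})\geq \frac{z_{\sf{PDB}}}{4\sum_i\theta_i\gamma_i}$, which can be exponentially small, so it establishes the existence statement of the lemma but would not by itself support the algorithmic theorem. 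Two minor points: you should dispose of constraints with $p_j=0$ explicitly as the paper does (there $\theta_i=0$ whenever $P_{ji}>0$, so the constraint holds almost surely), and your constant $4\zeta_1\zeta_2$ versus the paper's $2\zeta_1\zeta_2$ is immaterial to the stated bound.
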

\noindent We obtain such a solution satisfying the above property using an LP relaxation of \ref{eq:pdb} via a randomized rounding approach.

\vspace{2mm}
\noindent \textbf{LP Relaxation and Rounding.} We consider the following linear program,

\begin{gather}
\tag{LP-PDB}
\label{eq:lp-pdb}
    z_{\sf{LP-PDB}} = \max_{\bm{\omega}\geq \mathbf{0}} \left\{ \quad \sum_{i=1}^n \theta_i \gamma_i \omega_i \; \left | \;\;
    \begin{aligned}
    & \sum_{i=1}^n \theta_i \mathbf{P}_i \omega_i\leq \mathbf{p} \\
    & \sum_{i=1}^n \gamma_i \mathbf{Q}_i \omega_i \leq \mathbf{q}
    \end{aligned}\;\;\right.\right\},
\end{gather}
where $\mathbf{P}_i$ is the $i$-th column of $\mathbf{P}$ and $\mathbf{Q}_i$ is the $i$-th column of $\mathbf{Q}$. 

We first show that \ref{eq:lp-pdb} is a relaxation of \ref{eq:pdb}.
\begin{lemma}
\label{lm:lppdb-relaxation}
$z_{\sf{PDB}} \leq z_{\sf{LP-PDB}}$.
\end{lemma}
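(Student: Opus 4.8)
The plan is to exhibit, for an optimal pair $(\mathbf{x}^\star,\mathbf{y}^\star)$ of \ref{eq:pdb}, a feasible point $\bm{\omega}$ of \ref{eq:lp-pdb} whose objective value equals $\mathbf{x}^{\star T}\mathbf{y}^\star$; since $\mathcal{X}$ and $\mathcal{Y}$ are polytopes the maximum defining $z_{\sf{PDB}}$ is attained (a continuous function on a compact set), so producing such an $\bm{\omega}$ immediately gives $z_{\sf{PDB}} \le z_{\sf{LP\text{-}PDB}}$. The natural candidate is the rescaled outer-product vector $\omega_i = \dfrac{x_i^\star y_i^\star}{\theta_i \gamma_i}$ for every coordinate $i$ with $\theta_i,\gamma_i>0$, and $\omega_i = 0$ otherwise. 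The degenerate coordinates are harmless: if $\theta_i = 0$ then $x_i = 0$ for all $\mathbf{x}\in\mathcal{X}$ (as $\mathcal{X}\subseteq\mathbb{R}_+^n$), hence $x_i^\star = 0$ and this index contributes nothing to either the objective or the constraints of \ref{eq:lp-pdb}; symmetrically if $\gamma_i = 0$. Note also $\mathbf{0}\in\mathcal{X}\cap\mathcal{Y}$ since $\mathbf{p},\mathbf{q}\ge\mathbf{0}$, so both sets are nonempty, and their boundedness ensures $\theta_i,\gamma_i<\infty$.

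I would then verify the three requirements in turn. Nonnegativity of $\bm{\omega}$ is immediate from $\mathbf{x}^\star,\mathbf{y}^\star \ge \mathbf{0}$. For the objective, by construction $\sum_{i} \theta_i\gamma_i\omega_i = \sum_{i} x_i^\star y_i^\star = \mathbf{x}^{\star T}\mathbf{y}^\star = z_{\sf{PDB}}$. For the first packing constraint, substituting $\omega_i$ and cancelling $\theta_i$ gives $\sum_{i} \theta_i \mathbf{P}_i \omega_i = \sum_{i} \mathbf{P}_i\, x_i^\star\,(y_i^\star/\gamma_i)$; the key observation is $y_i^\star/\gamma_i \in [0,1]$ because $\gamma_i = \max_{\mathbf{y}\in\mathcal{Y}} y_i \ge y_i^\star$, so, using $\mathbf{P}\ge\mathbf{0}$ and $x_i^\star\ge 0$ coordinatewise, $\sum_{i} \mathbf{P}_i x_i^\star (y_i^\star/\gamma_i) \le \sum_{i} \mathbf{P}_i x_i^\star = \mathbf{P}\mathbf{x}^\star \le \mathbf{p}$. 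Symmetrically, cancelling $\gamma_i$ in the second constraint and using $x_i^\star/\theta_i \in [0,1]$ together with $\mathbf{Q}\ge\mathbf{0}$ yields $\sum_{i} \gamma_i \mathbf{Q}_i \omega_i = \sum_{i} \mathbf{Q}_i y_i^\star (x_i^\star/\theta_i) \le \mathbf{Q}\mathbf{y}^\star \le \mathbf{q}$.

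I do not expect a genuine obstacle here: the entire content is in guessing the correct $\bm{\omega}$, after which the argument is a two-line verification resting on the substitution $\omega_i = x_i^\star y_i^\star/(\theta_i\gamma_i)$ and the elementary bounds $x_i^\star \le \theta_i$ and $y_i^\star \le \gamma_i$. The only point requiring mild care is the bookkeeping for coordinates with $\theta_i = 0$ or $\gamma_i = 0$, which I would dispatch at the very start by observing that such coordinates are forced to zero in every feasible solution and may be ignored.
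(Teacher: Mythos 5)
Your proposal is correct and follows exactly the paper's argument: set $\omega_i = x_i^\star y_i^\star/(\theta_i\gamma_i)$, verify feasibility via $x_i^\star \le \theta_i$, $y_i^\star \le \gamma_i$ and the nonnegativity of $\mathbf{P}$ and $\mathbf{Q}$, and observe the objective value is preserved. The only difference is your explicit handling of coordinates with $\theta_i = 0$ or $\gamma_i = 0$, a minor point the paper leaves implicit.
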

\begin{proof}
Let $(\mathbf{x}^*, \mathbf{y}^*)$ be an optimal solution of \ref{eq:pdb}. 
Let $\bm{\omega}^*$ be such that $\omega^*_i = \frac{x^*_i}{\theta_i}.\frac{y^*_i}{\gamma_i}$ for all $i \in [n]$. By definition, we have $x^*_i \leq \theta_i$ and $y^*_i \leq \gamma_i$ for all $i \in [n]$. Hence, 
\begin{align*}
    \sum_{i=1}^n \theta_i \mathbf{P}_i \omega^*_i 
    &= \sum_{i=1}^n \theta_i \mathbf{P}_i \frac{x^*_i}{\theta_i}\frac{y^*_i}{\gamma_i} \leq \sum_{i=1}^n \mathbf{P}_i x^*_i \leq \mathbf{p},
\end{align*}
and  
\begin{align*}
    \sum_{i=1}^n \gamma_i \mathbf{Q}_i \omega^*_i 
    &= \sum_{i=1}^n \gamma_i \mathbf{Q}_i \frac{x^*_i}{\theta_i}\frac{y^*_i}{\gamma_i} \leq \sum_{i=1}^n \mathbf{Q}_i y^*_i \leq \mathbf{q}.
\end{align*}
Note that we use the fact that $\mathbf{P}$ and $\mathbf{Q}$ are non-negative in the above inequalities. Therefore, $\bm{\omega}^*$ is feasible for \ref{eq:lp-pdb} with objective value $$\sum_{i=1}^{n} \theta_i \gamma_i  \omega^*_i = \sum_{i=1}^{n}  x^*_i y^*_i = z_{\sf{PDB}},$$
which concludes the proof.
\end{proof}
Now, to construct our near-optimal near-integral solution, we consider the randomized rounding approach
described in Algorithm~\ref{alg:rounding-pdb}.
Note that by definition of $\theta_i$, $$\underset{\bm{\omega}}{\max}\{\omega_i \mid \sum_{j=1}^n \theta_j \mathbf{P}_j \omega_j \leq \mathbf{p}, \bm{\omega} \geq \mathbf{0}\} = 1,$$ for all $i \in [n]$. Hence, for all $i \in [n]$, $\omega^*_i$ defined in Algorithm~\ref{alg:rounding-pdb} is such that $\omega^*_i \leq 1$.

\begin{algorithm}
\caption{\,}\label{alg:rounding-pdb}
\hspace*{\algorithmicindent} \textbf{Input:} $\epsilon >0$.\\
\hspace*{\algorithmicindent} \textbf{Output:} a feasible solution verifying the structural property in Lemma~\ref{lm:structural-property-pdb} with probability at least $1-\epsilon-o(1)$.
\begin{algorithmic}[1]
\State Let $\bm{\omega}^*$ be an optimal solution of \ref{eq:lp-pdb} and let $T = 8\lceil{\log \frac{1}{\epsilon}}\rceil$.
\State Initialize $\max = 0$, $\hat{\mathbf{x}} = \mathbf{0}$ and $ \hat{\mathbf{y}} = \mathbf{0}$.
\For{$t=1, \dots, T$}
\State let $\Tilde{\omega}_1, \dots, \Tilde{\omega}_n$ be i.i.d. Bernoulli variables with $\mathbb{P}(\Tilde{\omega}_i = 1) = \omega^*_i$ for  $i \in [n]$.
\State let $\zeta_1^{\min} = \min\{\zeta \geq 1 \;|\; (\theta_1 \Tilde{\omega}_1, \dots, \theta_n \Tilde{\omega}_n)/\zeta \in \mathcal{X}\}$ and $\tilde{\mathbf{x}} = (\theta_1 \Tilde{\omega}_1, \dots, \theta_n \Tilde{\omega}_n)/\zeta_1^{\min}$.
\State let $\zeta_2^{\min} = \min\{\zeta \geq 1 \;|\; (\gamma_1 \Tilde{\omega}_1, \dots, \gamma_n \Tilde{\omega}_n)/\zeta \in \mathcal{Y}\}$ and $\tilde{\mathbf{y}} = (\gamma_1 \Tilde{\omega}_1, \dots, \gamma_n \Tilde{\omega}_n)/\zeta_2^{\min}$.
\If{$\tilde{\mathbf{x}}^T \tilde{\mathbf{y}} \geq \max$ \;}
\State set $\hat{x}_i = \tilde{x}_i$ and $\hat{y}_i = \tilde{y}_i$ for  $i \in [n]$.
\State set $\max = \tilde{\mathbf{x}}^T\tilde{\mathbf{y}}$.
\EndIf
\EndFor
\State \Return $(\hat{\mathbf{x}}, \hat{\mathbf{y}})$
\end{algorithmic}
\end{algorithm}
In our proof of Lemma~\ref{lm:structural-property-pdb}, we use  the following variant of   Chernoff bounds.
\begin{lemma}
\label{lm:chernoff-bounds}
{\normalfont (Chernoff Bounds~\cite{chernoff52}).}\\
\noindent {\normalfont (a)} Let $\chi_1, \dots, \chi_r$ be independent Bernoulli trials. Denote $\Xi := \sum_{i=1}^{r}\epsilon_i \chi_i$ where $\epsilon_1, \dots, \epsilon_r$ are reals in [0,1]. Let $s >0$ such that $\mathbb{E}(\Xi) \leq s$. Then for any $ \delta >0$ we have,
\begin{align*}
    \mathbb{P}(\Xi \geq (1+\delta)s) \leq \left(\frac{e^{\delta}}{(1+\delta)^{1+\delta}}\right)^{s}.
\end{align*}

\vspace{2mm}

\noindent {\normalfont (b)} Let $\chi_1, \dots, \chi_r$ be independent Bernoulli trials. Denote $\Xi := \sum_{i=1}^{r} \epsilon_i \chi_i$ where $\epsilon_1, \dots, \epsilon_r$ are reals in (0,1]. Then for any $0 < \delta < 1$,
\begin{align*}
    \mathbb{P}(\Xi \leq (1-\delta)\mathbb{E}(\Xi)) \leq e^{-\frac{1}{2}\delta^2 \mathbb{E}(\Xi)}.
\end{align*}
\end{lemma}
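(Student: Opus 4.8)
The plan is to establish both tail inequalities by the exponential-moment (Chernoff) method: bound the probability by a moment generating function via Markov's inequality, factorize using independence, and then optimize a free parameter. The only feature that distinguishes this variant from the textbook multiplicative Chernoff bound is the presence of the weights $\epsilon_i$ in the sum $\Xi = \sum_{i=1}^r \epsilon_i \chi_i$; these I would absorb by a single convexity estimate. Throughout, write $p_i = \mathbb{P}(\chi_i = 1)$ and $\mu = \mathbb{E}(\Xi) = \sum_{i=1}^r \epsilon_i p_i$.

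For part (a), fix $\lambda > 0$. Markov's inequality applied to $e^{\lambda \Xi}$ gives $\mathbb{P}(\Xi \ge (1+\delta)s) \le e^{-\lambda(1+\delta)s}\,\mathbb{E}(e^{\lambda \Xi})$, and independence factorizes $\mathbb{E}(e^{\lambda \Xi}) = \prod_{i=1}^r \bigl(1 + p_i(e^{\lambda \epsilon_i} - 1)\bigr)$. The key step is the convexity bound: since $x \mapsto e^{\lambda x}$ is convex and $\epsilon_i \in [0,1]$, we have $e^{\lambda \epsilon_i} \le 1 + \epsilon_i(e^{\lambda} - 1)$, so each factor is at most $1 + p_i \epsilon_i (e^{\lambda}-1) \le \exp\bigl(p_i \epsilon_i (e^{\lambda}-1)\bigr)$ by $1+x \le e^x$. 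Multiplying over $i$ and using $\mu \le s$ together with $e^{\lambda}-1 > 0$ yields $\mathbb{E}(e^{\lambda \Xi}) \le \exp\bigl((e^{\lambda}-1)s\bigr)$. Substituting back and minimizing the exponent $e^{\lambda} - 1 - \lambda(1+\delta)$ over $\lambda$, whose minimizer is $\lambda = \ln(1+\delta) > 0$, produces exactly $\bigl(e^{\delta}/(1+\delta)^{1+\delta}\bigr)^s$.

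For part (b), I would apply Markov to $e^{-\lambda \Xi}$ with $\lambda > 0$, giving $\mathbb{P}(\Xi \le (1-\delta)\mu) \le e^{\lambda(1-\delta)\mu}\,\mathbb{E}(e^{-\lambda \Xi})$. The mirror convexity estimate $e^{-\lambda \epsilon_i} \le 1 - \epsilon_i(1 - e^{-\lambda})$ for $\epsilon_i \in (0,1]$ gives, as above, $\mathbb{E}(e^{-\lambda \Xi}) \le \exp\bigl(-(1-e^{-\lambda})\mu\bigr)$. Minimizing the exponent over $\lambda$, with minimizer $\lambda = \ln\tfrac{1}{1-\delta}$, yields the intermediate bound $\mathbb{P}(\Xi \le (1-\delta)\mu) \le \exp\bigl(\mu(-\delta - (1-\delta)\ln(1-\delta))\bigr)$.

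The last step, and the only genuinely ad hoc part, is to replace this by the clean form $e^{-\delta^2 \mu / 2}$. This reduces to the elementary scalar inequality $h(\delta) := (1-\delta)\ln(1-\delta) + \delta - \tfrac{1}{2}\delta^2 \ge 0$ on $[0,1)$, which I would verify by observing $h(0) = 0$, $h'(\delta) = -\ln(1-\delta) - \delta$ with $h'(0) = 0$, and $h''(\delta) = \delta/(1-\delta) \ge 0$; convexity with a vanishing derivative at the origin forces $h \ge 0$. I expect the convexity/weighting estimate to be the crux, since it is precisely what adapts the standard unweighted Chernoff argument to arbitrary weights $\epsilon_i \in [0,1]$; once it is in place, both tails follow from routine optimization of the exponent and the one elementary inequality above.
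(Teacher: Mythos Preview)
Your proposal is correct and follows essentially the same route as the paper: exponential-moment bound via Markov, factorization by independence, the convexity estimate $e^{t\epsilon_i}\le 1+\epsilon_i(e^{t}-1)$ (equivalently $(1\pm\delta)^{\epsilon_i}\le 1\pm\epsilon_i\delta$) to handle the weights, the choice $t=\ln(1\pm\delta)$, and then the scalar inequality $(1-\delta)\ln(1-\delta)\ge -\delta+\tfrac{\delta^2}{2}$ for the clean form in~(b). Your derivation of that last inequality via $h(0)=h'(0)=0$ and $h''\ge 0$ is in fact tidier than the paper's, which asserts the (false) intermediate step $\ln(1-\delta)\ge -\delta+\tfrac{\delta^2}{2}$ before arriving at the correct conclusion.
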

\noindent For the sake of completeness, we present the proof for these bounds in Appendix \ref{appendix:chernoff-bounds}.

\vspace{2mm} \noindent{\em Proof  of Lemma~\ref{lm:structural-property-pdb}.}
We show that at any iteration of Algorithm~\ref{alg:rounding-pdb}, the vectors $\tilde{\mathbf{x}}, \tilde{\mathbf{y}} $ verify the structural property Lemma \ref{lm:structural-property-pdb} with constant probability. To show this, let us fix an iteration $t$ of the algorithm. It is sufficient to show that the following inequalities hold with constant probability,

\begin{gather}
\label{eq:properties-pdb}
\begin{aligned}
 \sum_{i=1}^n \mathbf{P}_i \frac{\theta_i\Tilde{\omega}_i}{\zeta^*_1} & \leq \mathbf{p}, \\
 \sum_{i=1}^n \mathbf{Q}_i \frac{\gamma_i\Tilde{\omega}_i}{\zeta^*_2} & \leq \mathbf{q}, \\
 \sum_{i=1}^n \frac{\theta_i\Tilde{\omega}_i}{\zeta^*_1}\frac{\gamma_i\Tilde{\omega}_i}{\zeta^*_2}  &\geq \frac{z_{\sf{LP-PDB}}}{2\zeta^*_1 \zeta^*_2}. 
\end{aligned}
\end{gather}
In fact, if the inequalities~\eqref{eq:properties-pdb} hold with constant probability, then with constant probability, the two vectors $(\theta_1\Tilde{\omega}_1, \dots, \theta_1\Tilde{\omega}_n)/\zeta_1^*$ and $(\gamma_1\Tilde{\omega}_1, \dots, \gamma_n\Tilde{\omega}_n)/\zeta_1^*$ are feasible for \ref{eq:pdb} implying that that $1 \leq \zeta_1^{\min} \leq \zeta_1^*$ and $1 \leq \zeta_2^{\min} \leq \zeta_2^*$ in this iteration. Moreover, these vectors have an objective value at least $\frac{1}{2\zeta^*_1 \zeta^*_2}z_{\sf{LP-PDB}}$. which is by Lemma~\ref{lm:lppdb-relaxation} greater than $\frac{1}{2\zeta^*_1 \zeta^*_2}z_{\sf{PDB}}$. This implies that the feasible solution $\Tilde{\mathbf{x}}, \Tilde{\mathbf{y}}$ of \ref{eq:pdb} has an objective value at least $O(\frac{\log \log m_1}{\log m_1}\frac{\log \log m_2}{\log m_2})$ of the optimum.

\noindent First, we have,
\begin{gather}
\tag*{ }
\label{ineq1}
\begin{aligned}
\mathbb{P}\left(\sum_{i=1}^n \theta_i\mathbf{P}_i\frac{\Tilde{\omega}_i}{\zeta^*_1} \nleq \mathbf{p}\right)
& \leq \sum_{j=1}^{m_1} \mathbb{P}\left(\sum_{i=1}^n \theta_i P_{ji}\frac{\Tilde{\omega}_i}{\zeta^*_1} > p_j\right) \\
&= \sum_{j \in [m_1]: p_j > 0} \mathbb{P}\left(\sum_{i=1}^n \frac{\theta_i P_{ji}}{p_j}\Tilde{\omega}_i > \zeta^*_1\right) \\
&\leq \sum_{j \in [m_1]: p_j > 0} \left(\frac{e^{\zeta^*_1-1}}{(\zeta^*_1)^{\zeta^*_1}}\right) \\
&\leq m_1 \frac{e^{\zeta^*_1-1}}{(\zeta^*_1)^{\zeta^*_1}},
\end{aligned}
\end{gather}
where the first inequality  follows from a union bound on $m_1$ constraints. The second equality holds because for all $j \in [m_1]$ such that $p_j = 0$, we have
$$\mathbb{P}\left(\sum_{i=1}^n \theta_i P_{ji}\frac{\Tilde{\omega}_i}{\zeta^*_1} > p_j\right) = 0.$$ 
In fact, if $p_j = 0$, by feasibility of $\bm{\omega}^*$ in \ref{eq:lp-pdb}, we get $$\sum_{i=1}^n \theta_i P_{ji}\frac{\omega^*_i}{\zeta^*_1} = 0,$$  hence, we almost surely have $$\sum_{i=1}^n \theta_i P_{ji}\frac{\Tilde{\omega}_i}{\zeta^*_1} = 0.$$  The second inequality follows from the Chernoff bounds $\hyperref[lm:chernoff-bounds]{(a)}$ with $\delta = \zeta^*_1 - 1$ and $s=1$. In particular, $\frac{\theta_i P_{ji}}{p_j} \in [0,1]$ by definition of $\theta_i$ for all $i \in [n]$ and $j \in [m_1]$ such that $p_j > 0$ and for all $j \in [m_1]$ such that $p_j > 0$ we have, $$\mathbb{E}\left[\sum_{i=1}^n \frac{\theta_i P_{ji}}{p_j}\Tilde{\omega}_i\right] = \sum_{i=1}^n \frac{\theta_i P_{ji}}{p_j}\omega^*_i \leq 1 ,$$ which holds by feasibility of $\bm{\omega}^*$. Next, note that

\begin{align*}
    \frac{e^{\zeta^*_1-1}}{(\zeta^*_1)^{\zeta^*_1}}
    &= e^{\zeta^*_1 - 1 - \zeta^*_1 \log \zeta^*_1}\\
    &= e^{- \zeta^*_1 \log \zeta^*_1 + o(\zeta^*_1 \log \zeta^*_1)}
\end{align*}

\noindent and,

\begin{align*}
    \zeta^*_1 \log \zeta^*_1
    &= \left(\frac{3\log m_1}{\log \log m_1}+2\right) \log \left(\frac{3\log m_1}{\log \log m_1}+2 \right)\\
    &=3\log m_1 + o(\log m_1)
\end{align*}

\noindent Hence, $$\frac{e^{\zeta^*_1-1}}{(\zeta^*_1)^{\zeta^*_1}} = e^{-3\log m_1 + o(\log m_1)}= O(e^{-2\log m_1}) = O(\frac{1}{m_1^2})$$Therefore, there exists a constant $c > 0$ such that,
\begin{gather}
    \label{eq:property1-pdb}
    \mathbb{P}(\sum_{i=1}^n \mathbf{P}_i\frac{\theta_i\Tilde{\omega}_i}{\zeta^*_1} \nleq \mathbf{p})  \leq \frac{c}{m_1}.
\end{gather}
By a similar argument there exists a constant $c' > 0$, such that
\begin{gather}
\label{eq:property2-pdb}
    \begin{aligned}
    \mathbb{P}(\sum_{i=1}^n \mathbf{Q}_i\frac{\gamma_i\Tilde{\omega}_i}{\zeta^*_2} \nleq \mathbf{q}) \leq \frac{c'}{m_2},
\end{aligned}
\end{gather}
Finally we have,
\begin{gather}
\label{eq:property3-pdb}
\begin{aligned}
    \mathbb{P}\left(\sum_{i=1}^n \frac{\theta_i\gamma_i\Tilde{\omega}^2_i}{\zeta^*_1\zeta^*_2} < \frac{1}{2\zeta^*_1\zeta^*_2} \sum_{i=1}^n \theta_i\gamma_i\omega^*_i\right)
    &= \mathbb{P}\left(\sum_{i=1}^n \frac{\theta_i\gamma_i}{\sum_{j=1}^n \theta_j\gamma_j\omega^*_j}\Tilde{\omega}_i < \frac{1}{2}\right) \leq e^{-\frac{1}{8}},
\end{aligned}
\end{gather}
where the inequality follows from Chernoff bounds $\hyperref[lm:chernoff-bounds]{(b)}$ with $\delta=1/2$. In particular,  for  $i \in [n]$, $$\frac{\theta_i\gamma_i}{\sum_{j=1}^n \theta_j\gamma_j\omega^*_j} \leq 1.$$  This is because the unit vector $\mathbf{e}_i$ is feasible for \ref{eq:lp-pdb} for all $i \in [n]$ which implies $$\theta_i\gamma_i \leq z_{\sf{LP-PDB}} = \sum_{j=1}^n \theta_j\gamma_j\omega^*_j,$$ and we also have, $$\mathbb{E}\left[\sum_{i=1}^n \frac{\theta_i\gamma_i}{\sum_{j=1}^n \theta_j\gamma_j\omega^*_j}\Tilde{\omega}_i\right] = 1.$$
Combining inequalities \eqref{eq:property1-pdb}, \eqref{eq:property2-pdb} and \eqref{eq:property3-pdb} we get that properties \eqref{eq:properties-pdb} hold with the probability at least $$1-\frac{c}{m_1}-\frac{c'}{m_2}-e^{-\frac{1}{8}} = 1-e^{-
\frac{1}{8}} - o(1),$$ which is greater than a constant for $m_1$ and $m_2$ large enough. \qed

\vspace{2mm}\noindent {\em Proof  of Theorem~\ref{th:pdb}.} Let $(\hat{\mathbf{x}}, \hat{\mathbf{y}})$ be the output solution of Algorithm~\ref{alg:rounding-pdb}. Then $(\hat{\mathbf{x}}, \hat{\mathbf{y}})$ has an objective value at least $O(\frac{\log \log m_1}{\log m_1}\frac{\log \log m_2}{\log m_2})$ of optimum of \ref{eq:pdb} if and only if $(\Tilde{\mathbf{x}}, \Tilde{\mathbf{y}})$ does too at some iteration of the main loop.
From our proof of Lemma~\ref{lm:structural-property-pdb}, this happens with probability at least
\begin{align*}
    1-(e^{-\frac{1}{8}} - o(1))^T
    &=  1-e^{-\frac{T}{8}} - o(1)\\
    &=  1-e^{-\lceil{\log \frac{1}{\epsilon}}\rceil} - o(1)\\
    &\geq 1 - \epsilon - o(1).
\end{align*}
Therefore, with probability at least $1-\epsilon - o(1)$, Algorithm~\ref{alg:rounding-pdb} outputs a feasible solution of \ref{eq:pdb} whose objective value is within $O(\frac{\log\log m_1}{\log m_1} \frac{\log\log m_2}{\log m_2})$ of $z_{\sf{PDB}}$. \qed

\vspace{3mm}
\noindent\textbf{Hardness of the General Disjoint Bilinear Program.} Like packing linear programs, the covering linear programs are known to have logarithmic integrality gaps. Hence, a natural question to ask would be whether similar results can be proven for an equivalent covering version of \ref{eq:pdb}, i.e., a disjoint bilinear program of the form,
\begin{gather}
\tag{CDB}
\label{eq:cdb}
\begin{aligned}
z_{\sf{cdb}} = \min_{\mathbf{x}, \mathbf{y}} \{\mathbf{x}^T\mathbf{y} \mid \mathbf{P}\mathbf{x} \geq \mathbf{p}, \; \mathbf{Q}\mathbf{y} \geq \mathbf{q},\; \mathbf{x}, \mathbf{y} \geq \mathbf{0}\}
\end{aligned}
\end{gather}
where $\mathbf{P} \in \mathbb{R}_+^{m_1 \times n}$, $\mathbf{Q} \in \mathbb{R}_+^{m_2 \times n}$, $\mathbf{p} \in \mathbb{R}_+^{m_1}$ and $\mathbf{q} \in \mathbb{R}_+^{m_2}$. However, the previous analysis does not extend to the covering case. In particular, we have the following inapproximability result.

\begin{theorem}
\label{th:inapproximability-cdb}
The covering disjoint bilinear program \ref{eq:cdb} is NP-hard to approximate within any finite factor.
\end{theorem}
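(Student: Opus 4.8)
The plan is to reduce from a known NP-hard decision problem and construct an instance of \ref{eq:cdb} whose optimal value is $0$ in ``yes'' instances and strictly positive in ``no'' instances. Since an approximation algorithm with any finite factor $\alpha$ must output a value that is $0$ whenever the optimum is $0$ (as $\alpha \cdot 0 = 0$) and must output a strictly positive value whenever the optimum is strictly positive, such an algorithm would decide the NP-hard problem, yielding the claimed inapproximability. The natural source problem is \textsc{Set Cover} (or equivalently \textsc{Vertex Cover} / \textsc{3-SAT}), which is a good fit because \ref{eq:cdb} already has the structure of two covering problems linked by a bilinear objective.

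The key construction I would carry out is the following. Given a \textsc{Set Cover} instance with universe $[m]$ and sets $S_1, \dots, S_n$, encode the cover in the $\mathbf{x}$-block: let $\mathbf{P}$ be the element--set incidence matrix and $\mathbf{p} = \mathbf{1}$, so that $\mathbf{P}\mathbf{x} \geq \mathbf{p}$ forces the support of $\mathbf{x}$ to be (a fractional relaxation of) a cover. For the $\mathbf{y}$-block, the idea is to make $\mathbf{y}$ an ``indicator of the complement of a cover'': impose constraints $\mathbf{Q}\mathbf{y} \geq \mathbf{q}$ that force $\sum_i y_i$ to be large while each individual $y_i$ can be set to $0$ only on the coordinates belonging to some chosen cover. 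Concretely, one can try $\mathbf{Q} = \mathbf{P}$, $\mathbf{q} = \mathbf{1}$ again but with a cardinality-type twist, or introduce an auxiliary budget constraint so that the two feasible regions ``conflict'': the bilinear objective $\mathbf{x}^T\mathbf{y} = \sum_i x_i y_i$ is $0$ exactly when the supports of $\mathbf{x}$ and $\mathbf{y}$ are disjoint, and one wants disjoint supports to be possible if and only if the original instance admits a small cover. I would engineer $\mathbf{Q}, \mathbf{q}$ (using the parameter $k$ from the \textsc{Set Cover} decision question ``is there a cover of size $\leq k$?'') so that $\mathbf{y}$ is forced to put positive weight on every set outside any $k$-element subset; then disjointness of supports is achievable iff a cover of size $\leq k$ exists, giving optimum $0$ in that case and a positive lower bound (bounded away from $0$ by the rationality/size of the instance) otherwise.

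The main obstacle I anticipate is controlling the \emph{fractional} solutions: \ref{eq:cdb} is a continuous program, so even if the combinatorial ``support-disjointness'' picture works for $0/1$ vectors, an adversary might exploit fractional $\mathbf{x}, \mathbf{y}$ to drive the objective to $0$ (or arbitrarily close to it) in a ``no'' instance, collapsing the gap. To handle this I would add normalization constraints — e.g.\ forcing $\sum_i x_i$ and $\sum_i y_i$ to lie in prescribed ranges, or scaling rows so that any feasible $\mathbf{x}$ has $x_i \geq 1$ on its support — so that a nonzero product $x_i y_i$ is bounded below by an explicit constant. One clean way to sidestep fractionality entirely is to reduce from a problem whose structure already rules out useful fractional cheating, or to argue that the relevant LP relaxation of the constructed covering system is integral on the coordinates that matter; I expect the bulk of the write-up to be this verification that ``optimum $=0$'' and ``optimum $>0$'' are the only two cases and that the second case has a uniform positive lower bound, so that any finite-factor approximation separates them. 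Once that dichotomy is established, the reduction is polynomial-size and the conclusion is immediate.
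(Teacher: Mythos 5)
Your high-level framework is the right one and matches the paper's: build a zero-gap reduction so that the optimum of the constructed \ref{eq:cdb} instance is $0$ on ``yes'' instances and strictly positive on ``no'' instances, and observe that any finite-factor approximation for a minimization problem must then return value $0$ exactly in the first case. (Incidentally, you do not need a uniform positive lower bound in the ``no'' case: any feasible solution returned by the algorithm has value at least the optimum, so it is automatically positive whenever the optimum is.) The genuine gap is in the construction itself. Your \textsc{Set Cover} gadget requires forcing $\mathbf{y}$ to put positive weight on every set outside some $k$-element subset, and your proposed fixes --- normalization constraints bounding $\sum_i x_i$ from above, or rescaling so that $x_i \geq 1$ on the support --- are packing-type restrictions that cannot be expressed with the covering constraints $\mathbf{Q}\mathbf{y} \geq \mathbf{q}$, $\mathbf{y} \geq \mathbf{0}$ that define \ref{eq:cdb}. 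Without them, a cardinality constraint such as $\sum_i y_i \geq n-k$ is satisfied by concentrating all mass on a single coordinate, the supports of $\mathbf{x}$ and $\mathbf{y}$ can be made disjoint regardless of whether a small cover exists, and the gap collapses. So as written the reduction does not go through.

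The escape you gesture at --- ``reduce from a problem whose structure already rules out fractional cheating'' --- is exactly what the paper does, and your passing suggestion $\mathbf{Q}=\mathbf{P}$, $\mathbf{q}=\mathbf{1}$ is in fact the productive direction. The paper reduces from \textsc{Monotone Not-All-Equal 3-SAT}: taking $\mathbf{P}=\mathbf{Q}=\mathbf{A}$ (the clause--variable incidence matrix) and $\mathbf{p}=\mathbf{q}=\mathbf{e}$, the question ``is the optimum $0$?'' becomes ``do there exist two support-disjoint fractional covers of the clauses?''. Fractionality is handled by rounding: if $\sum_v x_v y_v = 0$ then the supports are disjoint, and since $\mathbf{A}$ is $0/1$ the indicator vector $\tilde{x}_v = \mathds{1}_{\{x_v>0\}}$ still satisfies $\mathbf{A}\tilde{\mathbf{x}} \geq \mathbf{e}$ and $\mathbf{A}(\mathbf{e}-\tilde{\mathbf{x}}) \geq \mathbf{e}$, which is precisely a not-all-equal truth assignment; the converse direction takes $\mathbf{x}$ to be the indicator of the true variables and $\mathbf{y}=\mathbf{e}-\mathbf{x}$. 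To complete your proof you would need to either supply such a self-complementary source problem or exhibit a covering-only gadget that prevents mass concentration; the latter does not appear to exist.
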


\noindent The proof of Theorem~\ref{th:inapproximability-cdb} uses a polynomial time transformation from the \textit{Monotone Not-All-Equal 3-Satisfiability} (MNAE3SAT) problem and is given in Appendix \ref{appendix:proof-inapproximability}.

\section{Relation to the Reformulation Linearization Technique (RLT).}
\label{section:rlt}

The reformulation linearization technique (RLT) is a widely used approach to construct tight linear relaxations of non-convex continuous and mixed-integer optimization problems. It was first proposed by Sherali and Adams \cite{adams1986tight,adams1990linearization,adams1993mixed} in the context of binary bilinear programming and has been since then applied to different other problems including general bilinear problems (Sherali and Alameddine \cite{sherali1992new}), mixed-integer linear problems (Sherali and Adams \cite{sherali1994hierarchy,sherali2009reformulation}) and polynomial problems (Sherali and Tuncbilek \cite{sherali1992global}). RLT constructs an LP relaxation in two phases: a reformulation phase where some valid polynomial constraints are added to the problem. Then a linearization phase where the monomial terms of the resulting problem are linearized. The resulting relaxation can then be combined with a branch-and-bound framework to solve the problem to optimality (see Sherali and Alameddine \cite{sherali1992new} for example). We show that our LP relaxation \ref{eq:lp-pdb} is tightly related to RLT. In particular, we show the existence of a reformulation linearization of \ref{eq:pdb} that is equivalent to \ref{eq:lp-pdb}. 


Let us begin by writing \ref{eq:pdb} in the following equivalent epigraph form,

\begin{gather*}
\begin{aligned}
z_{\sf PDB} = \max_{\mathbf{x}, \mathbf{y}, \mathbf{u}} \quad & \sum_{i=1}^n \theta_i \gamma_i u_{ii}\\
& u_{ij} \leq x_i y_j, \quad \forall i, j\\
& \sum_{i=1}^n \theta_i\mathbf{P}_ix_i \leq \mathbf{p}, \quad \quad  \sum_{i=1}^n \gamma_i\mathbf{Q}_iy_i \leq \mathbf{q},\\
& 0 \leq x_i \leq 1, \quad 0 \leq y_i \leq 1, \quad \forall i.\\
\end{aligned}
\end{gather*}

\vspace{2mm}
\noindent \textbf{Reformulation Phase.}
The reformulation phase consists of adding to the above program a set of valid polynomial constraints that one gets from multiplying the linear constraints by terms of the form $\prod_{i \in S} x_i \prod_{i \in S'} y_i \prod_{i \in T} (1-x_i) \prod_{i \in T'} (1-y_i)$ where $S, S', T, T' \subset [n]$. We add the polynomial constraints we get from multiplying the linear constraints involving $\mathbf{x}$ by $y_j$ for all $j \in [n]$ and the constraints involving $\mathbf{y}$ by $x_j$ for all $j \in [n]$. We get the following equivalent formulation of \ref{eq:pdb},
\begin{gather*}
\begin{aligned}
z_{\sf PDB} = \max_{\mathbf{x}, \mathbf{y}, \mathbf{u}} \quad & \sum_{i=1}^n \theta_i \gamma_i u_{ii}\\
& u_{ij} \leq x_i y_j, \quad \forall i, j\\
& \sum_{i=1}^n \theta_i\mathbf{P}_ix_i \leq \mathbf{p}, \quad \quad \sum_{i=1}^n \theta_i\mathbf{P}_ix_iy_j \leq \mathbf{p}y_j, \quad \forall j\\
& \sum_{i=1}^n \gamma_i\mathbf{Q}_iy_i \leq \mathbf{q}, \quad \quad \sum_{i=1}^n \gamma_i\mathbf{Q}_iy_ix_j \leq \mathbf{q}x_j, \quad \forall j\\
& 0 \leq x_i \leq 1, \quad 0 \leq y_i \leq 1, \quad \forall i,\\
& 0 \leq x_iy_j \leq y_j, \quad 0 \leq y_ix_j \leq x_j, \quad \forall i,j.\\
\end{aligned}
\end{gather*}



\vspace{2mm}
\noindent \textbf{Linearization Phase.}
In the linearization phase, the bilinear terms $x_iy_j$ in the above LP are replaced with their lower-bounds $u_{ij}$. We get the following linear relaxation of \ref{eq:pdb},

\begin{gather}
\label{eq:rlt-pdb}
\tag{RLT-PDB}
\begin{aligned}
z_{\sf RLT-PDB} = \max_{\mathbf{x}, \mathbf{y}, \mathbf{u}} \quad & \sum_{i=1}^n \theta_i \gamma_i u_{ii}\\
& \sum_{i=1}^n \theta_i\mathbf{P}_ix_i \leq \mathbf{p},\\
& \sum_{i=1}^n \theta_i\mathbf{P}_iu_{ij} \leq \mathbf{p}y_j, \quad \forall j\\
& \sum_{i=1}^n \gamma_i\mathbf{Q}_iy_i \leq \mathbf{q},\\
& \sum_{i=1}^n \gamma_i\mathbf{Q}_iu_{ji} \leq \mathbf{q}x_j, \quad \forall j\\
& 0 \leq x_i \leq 1, \quad 0 \leq y_i \leq 1, \quad \forall i,\\
& 0 \leq u_{ij} \leq y_j, \quad 0 \leq u_{ji} \leq x_j, \quad \forall i,j.\\
\end{aligned}
\end{gather}
We show that the relaxation \ref{eq:rlt-pdb} above is equivalent to \ref{eq:lp-pdb}. First, take an optimal solution $\mathbf{x}^*, \mathbf{y}^*, \mathbf{u}^*$ of \ref{eq:rlt-pdb}, we have $$\sum_{i=1}^n \theta_i\mathbf{P}_iu^*_{i,i} \leq \sum_{i=1}^n \theta_i\mathbf{P}_ix^*_{i} \leq \mathbf{p}$$ and $$\sum_{i=1}^n \gamma_i\mathbf{Q}_iu^*_{i,i} \leq \sum_{i=1}^n \gamma_i\mathbf{Q}_iy^*_{i} \leq \mathbf{q}.$$ Moreover, $0 \leq u^*_{i,i} \leq 1$ for all $i$. Hence, $\bm{\omega} \in [0,1]^n$ defined as $\omega_i = u^*_{i,i}$ for all $i$ is a feasible solution of \ref{eq:lp-pdb} with value $z_{\sf RLT-PDB}$. Conversely, let $\bm{\omega}^*$ denote an optimal solution of \ref{eq:lp-pdb}. Then the solution $\mathbf{x}, \mathbf{y}, \mathbf{u}$ defined as $x_{i} = y_{i}=u_{ii}=\omega^*_i$ for all $i$ and $u_{i,j} = 0$ for all $i \neq j$ is feasible for \ref{eq:rlt-pdb} (note that $\theta_i \mathbf{P}_i \leq \mathbf{p}$ and $\gamma_i \mathbf{Q}_i \leq \mathbf{q}$ for every $i$ by definition of $\theta_i$ and $\gamma_i$). The value of this solution is $z_{\sf LP-PDB}$.

One might want to add more polynomial constraints to \ref{eq:pdb} in the reformulation phase. For instance, one might want to add all the constraints we get from multiplying the linear constraints of \ref{eq:pdb} by all the first order polynomial terms $x_i$, $y_i$, $(1-x_i)$ and $(1-y_i)$. The resulting relaxation, known as the first level relaxation of the RLT hierarchy, is a widely used LP approximation for bilinear programs and has been observed to be a good empirical approximation (Sherali et al. \cite{sherali2000reduced}, Sherali and Adams \cite{adams1986tight,adams1993mixed}, Sherali and Alameddine \cite{sherali1992new}). We refer the reader to the Appendix \ref{appendix:1RLT} for a derivation of this relaxation and to Section~\ref{section:numerics} for a numerical comparison of this relaxation to \ref{eq:lp-pdb}. One might also want to multiply by higher order polynomial constraints of the form $\Pi(S,S',T,T') = \prod_{i \in S} x_i \prod_{i \in S'} y_i \prod_{i \in T} (1-x_i) \prod_{i \in T'} (1-y_i)$ where $S, S', T, T' \subset [n]$ at the expense of an exponential increase in the number of constraints of the resulting relaxation. The relaxation we get from multiplying by $\Pi(S,S',T,T')$ for all $S,S',T,T'$ such that $|S|+|S'|+|T|+|T'| \leq t$ is known as the $t$-th level relaxation of the RLT hierarchy.

The equivalence between \ref{eq:lp-pdb} and \ref{eq:rlt-pdb} gives a novel perspective on our LP relaxation and shows that the $t$-th level relaxations of the RLT hierarchy are guaranteed to be within $O(\frac{\log m_1}{\log \log m_1} \frac{\log m_2}{\log \log m_2})$ of the optimum for packing disjoint bilinear programs.

\section{From Disjoint Bilinear Optimization to Two-Stage Adjustable Robust Optimization}
\label{section:ar}

In this section, we present a polylogarithmic approximation algorithm for \ref{eq:ar}. In particular, we give a compact linear restriction of \ref{eq:ar} that provides near-optimal first-stage solutions with cost that is within a factor of $O(\frac{\log n}{\log\log n}\frac{\log L}{\log\log L})$ of $z_{\sf{AR}}$. Our proof uses ideas from our approximation of \ref{eq:pdb} applied to the separation problem $\hyperref[eq:qx]{\mathcal{Q}(\mathbf{x})}$. 

In order to simplify the exposition, we make the following assumption in the remainder of the paper,
\begin{assumption}
\label{assumption}
    $\mathbf{A}\mathbf{x} \geq 0$ for every $\mathbf{x} \in \mathcal{X}$.
\end{assumption}
\noindent Assumption \ref{assumption} states that first stage solutions can only help cover the whole or part of the uncertain right hand side uncertain parameters $\mathbf{h}$. We show in Appendix \ref{appendix:generalA} that Assumption \ref{assumption} is without loss of generality. We also give in Appendix \ref{appendix:generalA} an example of a network design application where Assumption \ref{assumption} does not hold.

Recall the two-stage adjustable problem \ref{eq:ar},
\begin{gather}
\tag*{ }
\begin{aligned}
\min_{\mathbf{x} \in \mathcal{X}} \quad \mathbf{c}^T\mathbf{x} + \mathcal{Q}(\mathbf{x}),
\end{aligned}
\end{gather}
where for all $\mathbf{x} \in \mathcal{X}$,
\begin{gather}
\tag*{ }
\begin{aligned}
\mathcal{Q}(\mathbf{x}) = \max_{\mathbf{h} \in \mathcal{U}} \min_{\mathbf{y}\geq \mathbf{0}} \; \{\mathbf{d}^T\mathbf{y} \mid
\mathbf{A}\mathbf{x} + \mathbf{B}\mathbf{y}\geq \mathbf{h}\}.
\end{aligned}
\end{gather}
Let us write $\hyperref[eq:qx]{\mathcal{Q}(\mathbf{x})}$ in its bilinear form. In particular, we take the dual of the inner minimization problem on $\mathbf{y}$ to get,
\begin{gather}
\tag*{ }
\begin{aligned}
\mathcal{Q}(\mathbf{x}) = \max_{\mathbf{h}, \mathbf{z} \geq \mathbf{0}} \; \left\{ \;\;
\mathbf{h}^T\mathbf{z} - (\mathbf{A}\mathbf{x})^T\mathbf{z} \;\; \left| \;\;\begin{aligned}
\mathbf{B}^T\mathbf{z} &\leq \mathbf{d}\\
\mathbf{R}\mathbf{h} &\leq \mathbf{r}\;\;
\end{aligned}
\right.\right\}.
\end{aligned}
\end{gather}

For the special case where $\mathbf{A} = \mathbf{0}$, the optimal first-stage solution is $\mathbf{x} = \mathbf{0}$ and \ref{eq:ar} reduces to an instance of \ref{eq:pdb}. 
Therefore, our algorithm for \ref{eq:pdb} gives an $O(\frac{\log n}{\log \log n}\frac{\log L}{\log \log L})$-approximation algorithm of \ref{eq:ar} in this special case.

In the general case, the separation problem $\hyperref[eq:qx]{\mathcal{Q}(\mathbf{x})}$ is the difference of a bilinear and a linear term. This makes it challenging to approximate $\hyperref[eq:qx]{\mathcal{Q}(\mathbf{x})}$. Instead, we approximate \ref{eq:ar} directly. In particular, for any $\mathbf{x} \in \mathcal{X}$, consider the following linear program:
\begin{gather}
\label{eq:qlp}
\tag*{ }
\mathcal{Q}^{\sf{LP}}(\mathbf{x}) = \max_{\bm{\omega} \geq \mathbf{0}} \left\{\;\; \sum_{i=1}^m (\theta_i \gamma_i - \theta_i\mathbf{a}_i^T\mathbf{x}) \omega_i \;\; \left| \;\; \begin{aligned}
\sum_{i=1}^m \theta_i \mathbf{b}_i \omega_i &\leq \mathbf{d}\\
\sum_{i=1}^m \gamma_i \mathbf{R}_i \omega_i &\leq \mathbf{r}
\end{aligned}
\right.\right\},
\end{gather}
where for all $i \in [m]$, $$\theta_i := \max_{\mathbf{z}}\{z_i \mid \mathbf{B}^T\mathbf{z} \leq \mathbf{d}, \mathbf{z} \geq 0\}, \quad \gamma_i := \max_{\mathbf{h}}\{h_i \mid \mathbf{R}\mathbf{h} \leq \mathbf{r}, \mathbf{h} \geq 0\},$$ $\mathbf{a}_i$ and $\mathbf{b}_i$ are the $i$-th row of $\mathbf{A}$ and $\mathbf{B}$ respectively and $\mathbf{R}_i$ is the $i$-th column of $\mathbf{R}$. For ease of notation, we use $\mathcal{Q}^{\sf{LP}}(\mathbf{x})$ to refer to both the problem and its optimal value. Let $$\eta := \frac{3\log n}{\log \log n} + 2, \quad \quad \beta := \frac{3\log L}{\log \log L} + 2.$$ Similar to \ref{eq:pdb}, we show the following structural property of the separation problem.

\begin{lemma} {\normalfont (\textbf{Structural Property})}.
\label{lm:structural-property-ar}
For every $\mathbf{x} \in \mathcal{X}$, there exists a near-integral solution $(\mathbf{h}, \mathbf{z}) \in \{0, \frac{\gamma_i}{\beta}\}^m \times \{0, \frac{\theta_i}{\eta}\}^m$ of $\hyperref[eq:qx]{\mathcal{Q}(\mathbf{x})}$ such that,
\begin{gather}
\label{eq:properties-ar}
\begin{aligned}
    \sum_{i=1}^m \mathbf{b}_i z_i &\leq \mathbf{d},\\
    \sum_{i=1}^m \mathbf{R}_i h_i &\leq \mathbf{r},\\
    \sum_{i=1}^m h_i z_i - (\mathbf{a}_i^T\mathbf{x})z_i &\geq \frac{1}{2\eta\beta} \cdot \mathcal{Q}^{\sf{LP}}(\beta\mathbf{x}).
\end{aligned}
\end{gather}
\end{lemma}
\noindent We construct such solution following a similar procedure as in Algorithm~\ref{alg:rounding-pdb}. In particular, let $\bm{\omega}^*$ be an optimal solution of $\hyperref[eq:qlp]{\mathcal{Q}^{\sf{LP}}(\beta\mathbf{x})}$, consider $\Tilde{\omega}_1, \dots, \Tilde{\omega}_m$ i.i.d. Bernoulli random variables such that $\mathbb{P}(\Tilde{\omega}_i = 1) = \omega^*_i$ for all $i \in [m]$ and let $(\mathbf{h}, \mathbf{z})$ such that $h_i = \frac{\gamma_i\Tilde{\omega}_i}{\beta}$ and $z_i = \frac{\theta_i\Tilde{\omega}_i}{\eta}$ for all $i \in [m]$. Such $(\mathbf{h}, \mathbf{z})$ satisfies the properties~\eqref{eq:properties-ar} with a constant probability. The proof of this fact is similar to the proof of Lemma~\ref{lm:structural-property-pdb} and is deferred to Appendix \ref{appendix:proof_structural_lemma}.

Note that while the linearization $\hyperref[eq:qlp]{\mathcal{Q}^{\sf{LP}}(\mathbf{x})}$ is still an upper-bound of $\hyperref[eq:qx]{\mathcal{Q}(\mathbf{x})}$ (as in the case of \ref{eq:pdb}), we cannot guarantee anymore that $\hyperref[eq:qlp]{\mathcal{Q}^{\sf{LP}}(\mathbf{x})}$ gives a good lower-bound of $\hyperref[eq:qx]{\mathcal{Q}(\mathbf{x})}$ due to the linear term. To see why, consider an optimal solution $\bm{\omega}^*$ of $\hyperref[eq:qlp]{\mathcal{Q}^{\sf{LP}}(\mathbf{x})}$ and let us try, as in the case of \ref{eq:pdb}, to construct a solution of $\hyperref[eq:qx]{\mathcal{Q}(\mathbf{x})}$ whose objective value is at least a fraction of the objective value of $\bm{\omega}^*$ in $\hyperref[eq:qlp]{\mathcal{Q}^{\sf{LP}}(\mathbf{x})}$. In particular, round $\bm{\omega}^*$ to a $0-1$ vector that we refer to as $\Tilde{\bm{\omega}}$, then take $(\mathbf{h}, \mathbf{z})$ such that $h_i = \frac{\gamma_i\Tilde{\omega}_i}{\beta}$ and $z_i = \frac{\theta_i\Tilde{\omega}_i}{\eta}$ for all $i \in [m]$. The objective value of $(\mathbf{h}, \mathbf{z})$ in $\hyperref[eq:qx]{\mathcal{Q}(\mathbf{x})}$ is now $\frac{1}{2\eta\beta}(\sum_{i=1}^m \theta_i\gamma_i\tilde{\omega}_i - \beta\theta_i \mathbf{a}_i^T\mathbf{x}\tilde{\omega}_i)$. This cannot be directly related to the objective value of $\tilde{\bm{\omega}}$ (and therefore to the objective value of $\bm{\omega}^*$) in $\hyperref[eq:qlp]{\mathcal{Q}^{\sf{LP}}(\mathbf{x})}$ due to an extra $\beta$ factor in the linear term. It is therefore unclear how to lower-bound $\hyperref[eq:qx]{\mathcal{Q}(\mathbf{x})}$ using $\hyperref[eq:qlp]{\mathcal{Q}^{\sf{LP}}(\mathbf{x})}$. To get around this issue, we use $\hyperref[eq:qlp]{\mathcal{Q}^{\sf{LP}}(\beta\mathbf{x})}$ to lower-bound $\hyperref[eq:qx]{\mathcal{Q}(\mathbf{x})}$ instead. We show that this is enough for our purposes given that when $\mathbf{x}$ is feasible for \ref{eq:ar} then $\beta\mathbf{x}$ is also feasible and vice versa. The above discussion is formalized in the following lemma.

\begin{lemma}
\label{lm:qx}
For  $\mathbf{x} \in \mathcal{X}$, we have,
\begin{align*}
    \frac{1}{2\eta \beta}\mathcal{Q}^{\sf{LP}}(\beta\mathbf{x}) \leq \mathcal{Q}(\mathbf{x}) \leq \mathcal{Q}^{\sf{LP}}(\mathbf{x}).
\end{align*}
\end{lemma}

\vspace{2mm}\noindent {\em Proof.} First, let $(\mathbf{h}^*, \mathbf{z}^*)$ be an optimal solution of $\hyperref[eq:qx]{\mathcal{Q}(\mathbf{x})}$. 
Define $\bm{\omega}^*$ such that $\omega^*_i = \frac{z^*_i}{\theta_i}.\frac{h^*_i}{\gamma_i}$ for all $i \in [m]$. Then $\bm{\omega}^*$ is feasible for $\hyperref[eq:qlp]{\mathcal{Q}^{\sf{LP}}(\mathbf{x})}$ with objective value, 
\begin{align*}
    \sum_{i=1}^{m} (\theta_i \gamma_i - \theta_i\mathbf{a}_i^T\mathbf{x} ) \omega^*_i 
    &= \sum_{i=1}^{m} h^*_i z^*_i - (\mathbf{a}_i^T\mathbf{x}) \frac{h^*_i}{\gamma_i} z^*_i\\ &\geq \sum_{i=1}^{m} h^*_i z^*_i - (\mathbf{a}_i^T\mathbf{x}) z^*_i \\
    &= \mathcal{Q}(\mathbf{x})
\end{align*}
where the inequality follows from the fact that $\frac{h_i}{\gamma_i} \leq 1$ for all $i \in [m]$. Hence $\mathcal{Q}(\mathbf{x}) \leq \mathcal{Q}^{\sf{LP}}(\mathbf{x})$.

Now, consider $(\mathbf{h}, \mathbf{z}) \in \{0, \frac{\gamma_i}{\beta}\}^m \times \{0, \frac{\theta_i}{\eta}\}^m$ satisfying properties~\eqref{eq:properties-ar}. The first two properties imply that $(\mathbf{h}, \mathbf{z})$ is a feasible solution for $\hyperref[eq:qx]{\mathcal{Q}(\mathbf{x})}$. The third property implies that the objective value of this solution is such that,
$$\sum_{i=1}^{m} h_i z_i - \mathbf{a}_i^T\mathbf{x} z_i
\geq \frac{1}{2\eta\beta} \mathcal{Q}^{\sf{LP}}(\beta\mathbf{x}).$$ Hence, $\mathcal{Q}(\mathbf{x}) \geq \frac{1}{2\eta\beta}\mathcal{Q}^{\sf{LP}}(\beta\mathbf{x})$. \qed

\vspace{2mm}
\noindent {\bf Our Linear Restriction}. We are now ready to derive our linear restriction of \ref{eq:ar}. In particular, consider the following problem where $\hyperref[eq:qx]{\mathcal{Q}(\mathbf{x})}$ is replaced by $\hyperref[eq:qlp]{\mathcal{Q}^{\sf{LP}}(\mathbf{x})}$ in the expression of \ref{eq:ar},
\begin{gather}\label{eq:lp-ar-0}
z_{\sf{LP-AR}} = \min_{\mathbf x \in \mathcal{X}} \; \left\{ \mathbf{c}^T\mathbf{x} +{\mathcal{Q}^{\sf{LP}}(\mathbf{x})}\right\}.
\end{gather}
Note that for any given first stage solution $\mathbf x$, $\hyperref[eq:qlp]{\mathcal{Q}^{\sf{LP}}(\mathbf{x})}$ is a maximization LP. Taking its dual 
and substituting in~\eqref{eq:lp-ar-0}, we get the following LP:

\begin{gather}
\tag{LP-AR}
\label{eq:lp-ar}
\begin{aligned}
z_{\sf{LP-AR}} = \min_{\mathbf{x}, \mathbf{y}, \bm{\alpha}} \quad & \mathbf{c}^T\mathbf{x} + \mathbf{d}^T\mathbf{y} + \mathbf{r}^T\bm{\alpha}\\
\textrm{s.t.} \quad & \theta_i \mathbf{a}_i^T\mathbf{x} + \theta_i \mathbf{b}_i^T\mathbf{y} + \gamma_i \mathbf{R}_i^T\bm{\alpha} \geq \theta_i\gamma_i \quad \forall i,\\
\quad & \mathbf{x} \in \mathcal{X}, \mathbf{y} \geq \mathbf{0}, \bm{\alpha} \geq \mathbf{0}.
\end{aligned}
\end{gather}
We claim that \ref{eq:lp-ar} is a restriction of \ref{eq:ar} and gives an $O(\frac{\log n}{\log \log n}\frac{\log L}{\log \log L})$-approximation for \ref{eq:ar}.

\vspace{2mm}\noindent {\em Proof of Theorem~\ref{th:ar}.} We prove the following:
\begin{align*}
    z_{\sf{AR}} \leq z_{\sf{LP-AR}} \leq 3\eta\beta z_{\sf{AR}}.
\end{align*}
Let $\mathbf{x}_{\sf LP}^*$ denote an optimal solution of (\ref{eq:lp-ar-0}). We have,
\begin{align*}
z_{\sf{LP-AR}}
&= \mathbf{c}^T\mathbf{x}_{\sf LP}^* + \mathcal{Q}^{\sf{LP}}(\mathbf{x}^*_{\sf{LP}})\\ 
&\geq \mathbf{c}^T\mathbf{x}_{\sf LP}^* + \mathcal{Q}(\mathbf{x}_{\sf LP}^*)\\
&\geq z_{\sf{AR}},
\end{align*}
where the first inequality follows from Lemma~\ref{lm:qx} and the last inequality follows from the feasibility of $\mathbf{x}^*_{\sf{LP}}$ in \ref{eq:ar}.

To prove the upper-bound on $z_{\sf{LP-AR}}$, let $\mathbf{x}^*$ denote an optimal first-stage solution of \ref{eq:ar}. We have
\begin{align*}
    z_{\sf{AR}}
    &\geq
    \mathcal{Q}(\mathbf{x}^*)\\
    &\geq
    \frac{1}{2\eta\beta} \mathcal{Q}^{\sf{LP}}(\beta \mathbf{x}^*)\\
    &=
    \frac{1}{2\eta\beta}(\beta \mathbf{c}^T\mathbf{x}^* + \mathcal{Q}^{\sf{LP}}(\beta\mathbf{x}^*)) - \frac{1}{2\eta} \mathbf{c}^T\mathbf{x}^*\\
    &\geq
    \frac{1}{2\eta\beta}(\beta \mathbf{c}^T\mathbf{x}^* + \mathcal{Q}^{\sf{LP}}(\beta\mathbf{x}^*)) - \frac{1}{2}z_{\sf{AR}}\\
    &\geq
    \frac{1}{2\eta\beta}z_{\sf{LP-AR}} - \frac{1}{2}z_{\sf{AR}},
\end{align*}
where the second inequality follows from Lemma~\ref{lm:qx}, the third inequality follows from the fact that $\mathbf{c}^T\mathbf{x}^* \leq z_{\sf AR}$ and the fact that $\eta \geq 1$. For the last inequality, note that $\beta \mathbf{x}^*$ is a feasible solution for (\ref{eq:lp-ar-0}).The above implies that $z_{\sf{LP-AR}} \leq 3\eta\beta z_{\sf{AR}}$. \qed

\section{Affine Policies and Relation to our LP Restriction} 
\label{section:affine}

Affine policies are widely used to approximate the two-stage problem \ref{eq:ar} where the second-stage variables $\mathbf{y}$ are restricted to be affine functions of the uncertain right-hand side $\mathbf{h}$. In other words, we consider $\mathbf{y}(\mathbf{h}) = \mathbf{P}\mathbf{h}+\mathbf{q},$ and optimize over $\mathbf{P}$ and $\mathbf{q}$. Affine policies were first introduced in Ben-Tal et al.~\cite{bental2004} and have been widely considered in the literature. Ben-Tal et al.~\cite{bental2004} show that affine policies give a tractable approximation for a large class of dynamic optimization problems. In particular, for a polyhedral uncertainty set $\mathcal{U}$, one can find the optimal affine policy by solving a linear program with polynomially many variables and constraints.

Many approximation bounds are known for the worst-case performance of affine policies in many settings. For a simplex uncertainty set, Bertsimas and Goyal \cite{Bertsimas2012OnTP} show that affine policies are optimal. In the case of a single budget of uncertainty set, El Housni and Goyal \cite{housni2021affine} show that affine policies achieve an approximation bound of $O(\frac{\log n}{\log \log n})$. They also show an $O(\frac{\log^2n}{\log\log n})$ approximation bound in the case of an intersection of disjoint budget of uncertainty sets (partition matroid) and an $O(\frac{L\log n}{\log \log n})$ approximation bound in the case of a general intersection of $L$ budget of uncertainty sets. The later results rely on a clever decomposition of the coordinates of $\mathbf{h}$ into cheap and expensive coordinates. In this section, we generalize the bound for a single budget of uncertainty set and significantly improve the bounds for partition matroids and general intersections of budget of uncertainty sets. In particular, we show that affine policies give an $O(\frac{\log n}{\log \log n}\frac{\log L}{\log \log L})$ worst-case approximation to \ref{eq:ar} under the general packing uncertainty set with $L$ constraints. In contrast with the techniques used in El Housni and Goyal \cite{housni2021affine}, our proof uses our LP restriction for \ref{eq:ar}. In particular, using an optimal solution of \ref{eq:lp-ar}, we construct an explicit feasible affine policy with objective value at most $z_{\sf LP-AR}$.

\vspace{2mm} \noindent {\em Proof of Theorem~\ref{th:AFF}.} Our construction is as follows: Let $\mathbf{x}^*, \mathbf{y}^*, \bm{\alpha}^*$ be an optimal solution of \ref{eq:lp-ar}. Consider the affine policy with first-stage solution $\mathbf{x}^*$ and second-stage policy
\begin{align}
\label{eq:affine_policy}
    \mathbf{y}_{\sf{AFF}}(\mathbf{h}) := \mathbf{y}^* + \sum_{i} \frac{\mathbf{R}_i^T \bm{\alpha}^*}{\theta_i} \mathbf{v_i}h_i,
\end{align}
for every $\mathbf{h} \in \mathcal{U}$, where for every $i \in [m]$, $$\mathbf{v_i} \in \argmin_{\mathbf{y} \geq 0}\{\mathbf{d}^T\mathbf{y} \mid \mathbf{B} \mathbf{y} \geq \mathbf{e}_i\}.$$ 
Let us first show that the above construction gives a feasible affine policy. We know $\mathbf{x}^* \in \mathcal{X}$ by construction. We need to verify the feasibility of the second-stage solution $\mathbf{y}_{\sf{AFF}}(\mathbf{h})$ for every $\mathbf{h} \in \mathcal{U}$. Consider any $\mathbf{h} \in \mathcal{U}$, the $i$-th constraint of second-stage problem is given by,

\begin{align*}
    \mathbf{a}_i^T\mathbf{x}^* + \mathbf{b}_i^T\mathbf{y}_{\sf{AFF}}(\mathbf{h}) 
    &= \mathbf{a}_i^T\mathbf{x}^*  + \mathbf{b}_i^T\mathbf{y}^* + \sum_{k=1}^m \frac{\mathbf{R}_k^T \bm{\alpha}^*}{\theta_k} \mathbf{b}_i^T\mathbf{v_k}h_k\\
    &\geq \mathbf{a}_i^T\mathbf{x}^*  + \mathbf{b}_i^T\mathbf{y}^* + \frac{\mathbf{R}_i^T \bm{\alpha}^*}{\theta_i} \mathbf{b}_i^T\mathbf{v_i}h_i\\
    &\geq \mathbf{a}_i^T\mathbf{x}^*  + \mathbf{b}_i^T\mathbf{y}^* + \frac{\mathbf{R}_i^T \bm{\alpha}^*}{\theta_i}h_i\\
    &\geq (\mathbf{a}_i^T\mathbf{x}^* + \mathbf{b}_i^T\mathbf{y}^*)\frac{h_i}{\gamma_i} + \frac{\mathbf{R}_i^T \bm{\alpha}^*}{\theta_i}h_i\\
    &= \frac{1}{\gamma_i\theta_i}(\theta_i\mathbf{a}_i^T\mathbf{x}^* + \theta_i\mathbf{b}_i^T\mathbf{y}^* + \gamma_i\mathbf{R}_i^T \bm{\alpha}^*)h_i,\\
    &\geq h_i,
\end{align*}

\noindent where the first inequality follows from the fact that $\frac{\mathbf{R}_k^T \bm{\alpha}^*}{\theta_k} \mathbf{b}_i^T\mathbf{v_k}h_k \geq 0$ for every $k \in [n]$. The second inequality follows from the definition of $\mathbf{v}_i$. The third inequality follows from the non-negativity of $(\mathbf{a}_i^T\mathbf{x}^* + \mathbf{b}_i^T\mathbf{y}^*)$ and because $h_i \leq \gamma_i$. And the last inequality follows from the feasibility of $\mathbf{x}^*, \mathbf{y}^*$ and $\bm{\alpha}^*$ in \ref{eq:lp-ar}.

Next, we show that the objective value of our affine policy is upper-bounded by the $z_{\sf{LP-AR}}$. Consider any $\mathbf{h} \in \mathcal{U}$. The cost of our affine policy under the scenario $\mathbf{h}$ is given by,
\begin{align*}
    \mathbf{c}^T\mathbf{x}^* + \mathbf{d}^T\mathbf{y}_{\sf{AFF}}(\mathbf{h}) 
    &= \mathbf{c}^T\mathbf{x}^*  + \mathbf{d}^T\mathbf{y}^* + \sum_{k=1}^m \frac{\mathbf{R}_k^T \bm{\alpha}^*}{\theta_k} \mathbf{d}^T\mathbf{v_k}h_k\\
    &= \mathbf{c}^T\mathbf{x}^* + \mathbf{d}^T\mathbf{y}^* + \sum_{k=1}^m \mathbf{R}_k^T \bm{\alpha}^* h_k\\
    &\leq \mathbf{c}^T\mathbf{x}^* + \mathbf{d}^T\mathbf{y}^* + \mathbf{r}^T \bm{\alpha}^* \\
    &= z_{\sf{LP-AR}},
\end{align*}
where the second equality follows from the fact that $\theta_k = \mathbf{d}^T\mathbf{v}_k$ by definition of $\theta_k$ and $\mathbf{v}_k$ and by strong duality. The inequality follows from the fact that $\sum_{k=1}^m \mathbf{R}_k h_k = \mathbf{R}\mathbf{h} \leq \mathbf{r}$. Therefore, the worst-case cost of our affine policy is at most $z_{\sf{LP-AR}}$.
\qed

\section{Numerical Experiments}
\label{section:numerics}

In this section, we study the numerical performance of our approximation algorithms for \ref{eq:pdb} and \ref{eq:ar}. We compare our algorithms to several state-of-the art algorithms
for these problems and observe that our algorithms are significantly faster and provide good approximate solutions. 

\subsection{Performance of our Approximation for \ref{eq:pdb}}

Our randomized rounding based algorithm for~\ref{eq:pdb}  gives an $O(\frac{\log \log m_1}{\log m_1} \frac{\log \log m_2}{\log m_2})$-approximation with high probability. We would like to note that this is a worst-case guarantee and our goal in this section is to evaluate the empirical performance of our algorithm. 

Before presenting the numerical evaluation of our algorithm, we would like to note that our algorithm can be numerically improved in the following natural way.

\vspace{2mm}
\noindent {\bf An Improved Version of Algorithm~\ref{alg:rounding-pdb}.} Let $(\hat{\mathbf{x}}, \hat{\mathbf{y}})$ denote the solution output by Algorithm \ref{alg:rounding-pdb}. We consider a natural heuristic to improve the objective value of our solution which consists of performing an alternating maximization starting from $(\hat{\mathbf{x}}, \hat{\mathbf{y}})$. In particular, we consider the algorithm which computes a sequence $(\mathbf{x}^1, \mathbf{y}^1), (\mathbf{x}^2, \mathbf{y}^2), \dots$ of improving solutions of \ref{eq:pdb} as follows: Let $(\mathbf{x}^1, \mathbf{y}^1) = (\Tilde{\mathbf{x}}, \Tilde{\mathbf{y}})$. To compute the improved solution $(\mathbf{x}^{i+1}, \mathbf{y}^{i+1})$ from $(\mathbf{x}^{i}, \mathbf{y}^{i})$, we fix the variables $\mathbf{x}$ in \ref{eq:pdb} to $\mathbf{x}^i$ and maximize with respect to $\mathbf{y}$ to get $\mathbf{y}^{i+1}$. Then we fix the variables $\mathbf{y}$ to $\mathbf{y}^{i+1}$ and maximize with respect to $\mathbf{x}$ to get $\mathbf{x}^{i+1}$. Note that each step is a linear maximization problem. The algorithm stops when the change in the objective value between two consecutive solutions $(\mathbf{x}^{i}, \mathbf{y}^{i})$ and $(\mathbf{x}^{i+1}, \mathbf{y}^{i+1})$ is at most $\epsilon$ (where $\epsilon>0$ is some chosen error tolerance). We refer to this improved version of Algorithm~\ref{alg:rounding-pdb} as Algorithm $2$.





We evaluate the empirical performance of Algorithm~\ref{alg:rounding-pdb} and its improved version Algorithm $2$ by comparing these to the following benchmarks.

\vspace{2mm} \noindent \textbf{First Level Relaxation of the RLT Hierarchy.} The first level relaxation of the RLT hierarchy is a widely used LP approximation for bilinear programs and has been observed to be a good empirical approximation (Sherali et al. \cite{sherali2000reduced}, Sherali and Adams \cite{adams1986tight,adams1993mixed}, Sherali and Alameddine \cite{sherali1992new}). 
We denote the relaxation by \ref{eq:1rlt} 
(we refer the reader to Appendix \ref{appendix:1RLT} for the derivation of this relaxation). 
An optimal solution $(\mathbf{x}, \mathbf{y}, \mathbf{u})$ of \ref{eq:1rlt} gives an approximate solution $(\mathbf{x}, \mathbf{y})$ for \ref{eq:pdb}. 
We compare this solution with the solution given by our algorithms in terms of objective value and running time. 


\vspace{2mm} \noindent \textbf{Gurobi Solver.} We also compare the solution given by our algorithms with the optimal solution of \ref{eq:pdb} computed using the bilinear solver of Gurobi v9.1.2, in terms of both objective value and running time.
In particular, we compare the running time of our algorithms with the running time needed by the Gurobi solver to reach a solution that is at least as good as the worst of our two solutions, namely the solution given by Algorithm~\ref{alg:rounding-pdb}. 

\paragraph{Experimental Setup.}
We consider instances of \ref{eq:pdb} where $m_1=m_2$, $\mathbf{p}=\mathbf{q}=\mathbf{e}$, $\mathbf{P}=\mathbf{I}_{m}+\mathbf{G}_{P}$ and $\mathbf{Q}=\mathbf{I}_{m}+\mathbf{G}_{Q}$ where $\mathbf{I}_m$ is the identity matrix and $\mathbf{G}_{P}$
and $\mathbf{G}_{Q}$ are random normalized Gaussian matrices. The results of our experiments were performed on a dual-core laptop with 8GB of RAM and 1.8GHz processor. 

Table \ref{table:lb-pdb} gives the results of our comparison between the solutions given by Algorithm \ref{alg:rounding-pdb} and its improved version Algorithm $2$ and the solutions given by the first level relaxation of the RLT hierarchy and the bilinear solver of Gurobi v9.1.2. In Table~\ref{table:lb-pdb}, $T_{\sf ALG_1}$ denotes the running time in seconds of Algorithm~\ref{alg:rounding-pdb}, $T_{\sf ALG_2}$ the running time in seconds of Algorithm $2$, $T_{\sf FL-RLT}$ the running time in seconds of the first level relaxation of the RLT hierarchy, $T_{\sf GRB}$ the running time in seconds of Gurobi solver to solve the problem to optimality and $T_{\sf GRB-LB}$ the running time in seconds needed by the Gurobi solver to reach a solution that is as good as the solution given by Algorithm~\ref{alg:rounding-pdb}. For the objective values, $z_{\sf SOL-RLT}$ denotes the objective value of the solution given by the first level relaxation of the RLT hierarchy, $z_{\sf ALG_1}$ and $z_{\sf ALG_2}$ the objective value of the solution given by Algorithm~\ref{alg:rounding-pdb} and Algorithm $2$ respectively and $z_{\sf PDB}$ denotes the optimal objective.

Note that Algorithm~\ref{alg:rounding-pdb} is based on the linear relaxation \ref{eq:lp-pdb} of \ref{eq:pdb}. The objective value of this relaxation gives an upper-bound of~\ref{eq:pdb}. The first level relaxation of the RLT hierarchy \ref{eq:1rlt} is also a relaxation of \ref{eq:pdb} and therefore, also gives an upper-bound of~\ref{eq:pdb}. 
The bilinear solver of Gurobi gives a sequence of improving upper-bounds of~\ref{eq:pdb}. We compare the upper-bound given by our LP relaxation denoted by $z_{\sf LP-PDB}$ and the upper-bound given by the first level relaxation of the RLT hierarchy denoted by $z_{\sf FL-RLT}$. We also compare the running time  to compute $z_{\sf LP-PDB}$, the running time  to compute $z_{\sf FL-RLT}$ and the running time  needed by the bilinear solver of Gurobi to reach an upper-bound that is at most $z_{\sf LP-PDB}$ denoted by $T_{\sf LP-PDB}$, $T_{\sf FL-RLT}$ and $T_{\sf GRB-UB}$ respectively. The results of the comparisons are reported in Table \ref{table:ub-pdb}.



\paragraph{Discussion.}
Table \ref{table:lb-pdb} describes the results of our comparison between the solutions given by Algorithm \ref{alg:rounding-pdb} and its improved version Algorithm $2$ and the solutions given by the first level relaxation of the RLT hierarchy and the bilinear solver of Gurobi v9.1.2 on random instances and for different values of $m_1=m_2$.

Compared to the first level relaxation of the RLT hierarchy, our algorithms are significantly faster especially for large dimensions. For example, for $m_1=m_2 \geq 100$, Algorithm $2$ is more than $400$ times faster while Algorithm~\ref{alg:rounding-pdb} is more than $900$ times faster. In terms of objective value, the solutions given by our algorithms have higher objective value than the solutions given by the first level relaxation of the RLT hierarchy. Moreover, the gap in the objective value grows with the dimension. For instance, for $m_1=m_2 \geq 40$, the objective value of the solution given by \ref{eq:1rlt} is less $40\%$ of the objective value of the solution given by Algorithm~\ref{alg:rounding-pdb} and less than $34\%$ of the objective value of the solution given by Algorithm $2$. 

We would like to note that compared to the bilinear solver of Gurobi, the running times of our algorithms  are significantly faster. In fact, computing an optimal solution using Gurobi is impractical for higher dimensions (Gurobi solver fails to finish in three hours for $m_1=m_2 > 40$). We also remark that, as the dimension increases, the bilinear solver of Gurobi takes significantly more time to compute a solution that is comparable to the solutions from our algorithms. For example, for $m_1=m_2\geq 500$, the bilinear solver of Gurobi is at least $1000$ times slower. Furthermore, we observe that the objective value of our solutions is a good approximation of the optimal value. For example, for all $m_1=m_2 \leq 40$, the solution given by Algorithm~\ref{alg:rounding-pdb} recovers at least $76 \%$ of the optimum while the solution given by Algorithm $2$ recovers at least $88 \%$ of the optimum.
Therefore, our algorithm and its improved version give a faster approach to compute a near-optimal approximate solution of \ref{eq:pdb}. In particular, this could be used as a more efficient approach to compute a good feasible solution in branch-and-bound based exact algorithms for the problem.






Table \ref{table:ub-pdb} gives the results of our comparison between the upper-bounds given by \ref{eq:lp-pdb}, the first level relaxation of the RLT hierarchy and the bilinear solver of Gurobi v9.1.2 on random instances and for different values of $m_1=m_2$. Table \ref{table:ub-pdb} shows that, as the dimension increases, our upper-bound $z_{\sf LP-PDB}$ becomes significantly faster to compute than the first level relaxation of the RLT hierarchy. For example, for $m_1=m_2 \geq 100$ our upper-bound is more than $3700$ times faster to compute. Moreover, the ratio between the two upper-bounds is close to $1$ and increases with the dimension (our upper-bound is at least $77\%$ of $z_{\sf FL-RLT}$ for all the instances we consider; it is at least $86\%$ for $m_1=m_2=100$). Compared to Gurobi, and as the dimension increases, the bilinear solver of Gurobi takes significantly more time to compute an upper-bound that is at least $z_{\sf LP-PDB}$. For example, for $m_1=m_2 \geq 500$, the bilinear solver of Gurobi is more than $7$ times slower. Given the above discussion, we conclude that our relaxation gives a faster to compute near-optimal upper-bound of \ref{eq:pdb} which can be used in branch-and-bound type of algorithms for the problem.


\begin{table}
    \centering

    
    \caption{Comparison of Algorithm~\ref{alg:rounding-pdb} and its improved version Algorithm $2$ with the first level relaxation of RLT hierarchy and the bilinear solver of Gurobi v9.1.2 for different values of $m_1=m_2$. The running times are in seconds and the entries denoted by $*$ exceeded a time limit of three hours.}
    \centering
    \begin{tabular}{|c|c|c|c|c|c|}
     \hline
     &&&&&\\
     $m_1=m_2$ & $T_{\sf ALG_1}$ & $T_{\sf ALG_2}$ & $T_{\sf FL-RLT}$ & $T_{\sf GRB}$ & $T_{\sf GRB-LB}$ \\[2ex]
     \hline
     10 & 0.464 & 0.686 & 0.223 & 0.356 & 0.219\\
     \hline
     20 & 0.323 & 0.474 & 0.715 & 1.993 & 0.271\\
     \hline
     30 & 0.185 & 0.342 & 1.729 & 60.667 & 0.228 \\
     \hline
     40 & 0.210 & 0.369 & 5.080 & 2027.430 & 1.328 \\
     \hline
     50 & 0.210 & 0.373 & 10.050 & * & 2.083\\
     \hline
     100 & 0.321 & 0.643 & 318.119 & * & 27.282\\
     \hline
     500 & 5.681 & 9.860 & * & * & 7850.564\\
     \hline
     1000 & 17.590 & 31.2890 & * & * & *\\
     \hline
    \end{tabular}
    
    \vspace{3mm}
    
    \begin{tabular}{|c|c|c|c|c|c|}
     \hline
     &&&&&\\
     $m_1=m_2$ & $\frac{z_{\sf SOL-RLT}}{z_{\sf ALG_1}}$ & $\frac{z_{\sf SOL-RLT}}{z_{\sf ALG_2}}$ & $\frac{z_{\sf ALG_1}}{z_{\sf PDB}}$ & $\frac{z_{\sf ALG_2}}{z_{\sf PDB}}$ & $\frac{z_{\sf ALG_1}}{z_{\sf ALG_2}}$ \\[2ex]
     \hline
     10 & 0.816 & 0.763 & 0.913 & 0.976 & 0.935\\
     \hline
     20 & 0.451 & 0.418 & 0.841 & 0.907 & 0.927\\
     \hline
     30 & 0.400 & 0.354 & 0.805 & 0.910 & 0.885\\
     \hline
     40 & 0.381 & 0.331 & 0.768 & 0.883 & 0.870\\
     \hline
     50 & 0.374 & 0.332 & * & * & 0.887\\
     \hline
     100 & 0.329 & 0.280 & * & * & 0.850\\
     \hline
     500 & * & * & * & * & 0.808\\
     \hline
     1000 & * & * & * & * &  0.835\\
     \hline
    \end{tabular}
    \label{table:lb-pdb}
\end{table}

\begin{table}
    \centering
    \caption{Comparison between the upper-bounds given by \ref{eq:lp-pdb}, by the first level relaxation of the RLT hierarchy and by the bilinear solver of Gurobi v9.1.2 for different values of $m_1=m_2$. The running times are in senconds and the entries denoted by $*$ exceeded a time limit of three hours.}
    \centering
        \begin{tabular}{|c|c|c|c|c|c|}
     \hline
     &&&&& \\
     $m_1=m_2$ & $T_{\sf LP-PDB}$ & $T_{\sf FL-RLT}$ & $T_{\sf GRB}$ & $T_{\sf GRB-UB}$ & $\frac{z_{\sf LP-PDB}}{z_{\sf FL-RLT}}
     $ \\[2ex]
     \hline
     10 & 0.084 & 0.223 & 0.356 & 0.086 & 0.779\\
     \hline
     20 & 0.034 & 0.715 & 1.993 & 0.053 & 0.789 \\
     \hline
     30 & 0.022 & 1.729 & 60.667 & 0.074 & 0.803\\
     \hline
     40 & 0.034 & 5.080 & 2027.340 & 0.113 & 0.819 \\
     \hline
     50 & 0.119 & 34.791 & * & 0.293 & 0.834 \\
     \hline
     100 & 0.105 & 394.443 & * & 0.939 & 0.869\\
     \hline
     500 & 2.254 & * & * & 15.570 & *\\
     \hline
     1000 & 9.339 & * & * & 325.815 & * \\
     \hline
    \end{tabular}
    \label{table:ub-pdb}
\end{table}


\subsection{Performance of our Approximation for \ref{eq:ar}}

Recall that our LP restriction of \ref{eq:ar} gives a  polylogarithmic approximation and is tightly related to affine policies for \ref{eq:ar}. In this section, we compare the numerical performance of our LP restriction to several benchmarks including affine policies, a generalization of the algorithm of El Housni and Goyal \cite{housni2021affine}, and the lower-bound of Hadjiyiannis et al. \cite{hadjiyiannis2011scenario}. Let us first briefly discuss the benchmarks.

\vspace{2mm} \noindent \textbf{Affine Policies.} Affine policies are a commonly used approximation for two-stage adjustable robust problems and have been shown to exhibit good theoretical as well as empirical performance (see for example Bertsimas et al. \cite{bertsimas2010optimality}, Bertsimas and Ruiter \cite{Bertsimas2016duality}, Ben-Tal et al. \cite{bental2004} and El Housni and Goyal \cite{GoyalElHoussni2017PAP}). Affine policies consider a restriction of the second-stage variables in \ref{eq:ar} to be affine functions of the uncertain parameters, i.e., functions of the form $\mathbf{y}(\mathbf{h}) = \mathbf{P} \mathbf{h}+\mathbf{q}$ for $\mathbf{P} \in \mathbb{R}^{m \times n}$ and $\mathbf{q} \in \mathbb{R}^n$. The optimal affine policy can be computed using the following restriction of \ref{eq:ar},
\begin{gather*}
\begin{aligned}
    \min_{\mathbf{x}, \mathbf{Y}} \quad & \mathbf{c}^T \mathbf{x} + \max_{\bm{\zeta} \in \hat{\mathcal{U}}}  \mathbf{d}^T\mathbf{Y} \bm{\zeta}\\
    s.t \quad & \mathbf{A}\mathbf{x} + \mathbf{B}\mathbf{Y}\bm{\zeta} \geq \mathbf{C}\bm{\zeta}, \quad \forall \bm{\zeta} \in \hat{\mathcal{U}},
    \\
    & \mathbf{Y} \bm{\zeta} \geq \mathbf{0}, \quad \forall \bm{\zeta} \in \hat{\mathcal{U}},\\
    & \mathbf{x} \in \mathcal{X}
\end{aligned}
\end{gather*}
where $\mathbf{C}$ denotes the matrix $[\mathbf{e}_2, \dots, \mathbf{e}_{m+1}]^T$ with $\mathbf{e}_i$ being the $i$-th vector of the canonical basis of $\mathbb{R}^{m+1}$ and 
$$\hat{\mathcal{U}} = \{\bm{\zeta} \in \mathbb{R}^{m+1} \mid \zeta_1 = 1, \mathbf{C}\bm{\zeta} \in \mathcal{U}\}.$$ 
Let $\mathcal{K}$ be the cone generated by $\hat{\mathcal{U}}$ given as:
$$
\mathcal{K} = \{(t, \mathbf{h}) \in \mathbb{R} \times \mathbb{R}^{m} \mid \mathbf{R} \mathbf{h} \leq \mathbf{r} t,\quad t \geq 0, \mathbf{h} \geq \mathbf{0}\},
$$
and let $\mathcal{K}^*$ be its dual cone given by,
$$
\mathcal{K}^* = \{(u, \bm{\sigma}) \in \mathbb{R} \times \mathbb{R}^{m} \mid \exists \bm{\lambda} \in \mathbb{R}^L \text{ s.t.} \quad u \geq \mathbf{r}^T\bm{\lambda}, \quad \mathbf{R}^T \bm{\lambda} \geq -\bm{\sigma},\quad u \geq 0, \bm{\sigma} \geq \mathbf{0}\}.
$$
For any $\bm{\sigma} \in \mathbb{R}^{m+1}$, 
$$
\bm{\sigma}^T \bm{\zeta} \geq 0 \quad \forall \bm{\zeta} \in \hat{\mathcal{U}} \quad \Leftrightarrow \quad \bm{\sigma} \in \mathcal{K}^*.
$$
The above problem can be expressed as the following equivalent linear program:
\begin{gather}
\label{eq:affine-ar}
\tag{AFF}
\begin{aligned}
    \min_{\mathbf{x}, \alpha} \quad & \mathbf{c}^T \mathbf{x} + \alpha\\
    s.t \quad & (\alpha\mathbf{e}_1^T - \mathbf{d}^T\mathbf{Y})^T \in \mathcal{K}^*,\\
    &(\mathbf{A}\mathbf{x}\mathbf{e}_1^T + \mathbf{B}\mathbf{Y} - \mathbf{C})^T \in \mathcal{K}^*,\\
    &\mathbf{Y} \in \mathcal{K}^*, \mathbf{x} \in \mathcal{X}.
\end{aligned}
\end{gather}
We refer to this LP as the affine restriction of \ref{eq:ar}. We compare the affine policies given by our LP restriction \ref{eq:lp-ar} to the optimal affine policy in terms of worst-case objective value and running time.

\vspace{2mm} \noindent \textbf{Algorithm of El Housni and Goyal \cite{housni2021affine}.} 
 El Housni and Goyal \cite{housni2021affine} give an algorithm to efficiently compute near-optimal affine policies that show that achieve a $O(\frac{\log n}{\log \log n})$ approximation guarantee for the case of a single budget of uncertainty set. In particular, they consider affine policies of the following form. Let $$\mathbf{v_i} \in \argmin_{\mathbf{y} \geq 0}\{\mathbf{d}^T\mathbf{y} \mid \mathbf{B} \mathbf{y} \geq \mathbf{e}_i\},\; \forall i \in [m].$$
 El Housni and Goyal \cite{housni2021affine} consider affine policies of the form:
 $$\mathbf{y}(\mathbf{h})=\sum_i \nu_i \mathbf{v}_i h_i + \mathbf{q},$$
 for single budget of uncertainty. These can be computed using a linear program. 

 For our comparison benchmark, we consider a generalization of the algorithm in El Housni and Goyal \cite{housni2021affine} that computes the optimal affine policy of the form $\mathbf{y}(\mathbf{h})=\sum_i \nu_i \mathbf{v}_i h_i + \mathbf{q}$ for the case of packing uncertainty set given by $\mathcal{U}=\{\mathbf{h} \geq \mathbf{0} \;|\; \mathbf{R}\mathbf{h} \leq \mathbf{r}\}$ (we refer the reader to Appendix \ref{appendix:proof_approx_affine} for an expression of the corresponding LP as well as its derivation). We denote the corresponding LP by \ref{eq:eg}. We compare the affine policies given by our restriction \ref{eq:lp-ar} to those given by \ref{eq:eg} in terms of objective value and running time.

\vspace{2mm} \noindent \textbf{Lower-Bound of Hadjiyiannis et al. \cite{hadjiyiannis2011scenario}.} 
 Hadjiyiannis et al. \cite{hadjiyiannis2011scenario} propose a scenario based lower-bound for~\ref{eq:ar}. In particular, they consider  a finite set of scenarios $\Delta \subset \mathcal{U}$ referred to as a critical set, and solve a relaxation of the two-stage problem for these scenarios. We denote such relaxation as $AR(\Delta)$. Hadjiyiannis et al. \cite{hadjiyiannis2011scenario} propose an efficient algorithm to compute one such critical set that relies on the conic dual of the affine restriction \ref{eq:affine-ar} given as:

\begin{gather}
\label{eq:dual-AFF}
\tag{D-AFF}
\begin{aligned}
    \max_{\bm{\lambda}, \bm{\Lambda}, \bm{\Theta}} \quad & \sum_{i=1}^m \Lambda_{i+1,i}\\
    s.t \quad & 1-\mathbf{e}_1^T\bm{\lambda} = 0\\
    & \bm{\Lambda} \mathbf{B} +\bm{\Theta} = \bm{\lambda} \mathbf{d}^T\\\
    & \mathbf{e}_1^T\bm{\Lambda}\mathbf{A} \leq \mathbf{c}^T\\
    & \bm{\lambda} \in \mathcal{K}\\
    & \bm{\Lambda}_j \in \mathcal{K}, \quad \bm{\Theta}_j \in \mathcal{K}, \quad \forall j.\\
\end{aligned}
\end{gather}
Here $\bm{\Lambda}_j$ and $\bm{\Theta}_j$ denote the $j$-th column of $\bm{\Lambda}$ and $\bm{\Theta}$ respectively where $\bm{\lambda}, \bm{\Lambda}$ and $\bm{\Theta}$ are the dual variables corresponding the first, second and third conic constraints in \ref{eq:affine-ar} respectively. In particular, given an optimal dual solution $\bm{\lambda}^*, \bm{\Lambda}^*, \bm{\Theta}^*$ of \ref{eq:dual-AFF}, Hadjiyiannis et al. \cite{hadjiyiannis2011scenario} show that the critical set given by $\Delta_{\sf AFF} = \{\lambda^*\} \cup \{ \Lambda^*_j \mid \Lambda^*_j \neq \mathbf{0}\} \cup \{ \Theta^*_j \mid \Theta^*_j \neq \mathbf{0}\}$ 
is such that under $\Delta_{\sf AFF}$ the relaxation $AR(\Delta_{\sf AFF})$ gives an empirically good lower-bound for~\ref{eq:ar}. We compare the worst-case cost of our affine policies to the lower bound $AR(\Delta_{\sf AFF})$.

\paragraph{Experimental Setup.}
Following Ben-Tal et al. \cite{bental2018piecewiseaffine}, we consider instances of \ref{eq:ar} where $n=m$, $\mathbf{c}=\mathbf{d}=\mathbf{e}$ and $\mathbf{A}=\mathbf{B}=\mathbf{I}_{m}+\mathbf{G}$, where $\mathbf{I}_{m}$ is the identity matrix and $\mathbf{G}$ is a random normalized Gaussian matrix. We consider the  case where $\mathcal{X} = \mathbb{R}_+^m$ and $\mathcal{U}$ is an intersection of $L$ budget of uncertainty sets of the form:
$$\mathcal{U} = \left\{\mathbf{h} \in [0, 1]^m \mid \bm{\omega}_l^T \mathbf{h} \leq 1 \; \forall l \in [L]\right\},$$ where the weight vectors $\bm{\omega}_l$ are normalized Gaussian vectors, i.e., $\omega_{l,i} = \frac{|G_{l,i}|}{\sqrt{\sum_{i} (G_{l,i})^2}}$ for  $\{G_{l,i}\}$ i.i.d. standard Gaussian variables. We use a dual-core laptop with 8GB of RAM and 1.8GHz processor for the experiments and present the results in Tables \ref{table:ar-l20}, \ref{table:ar-l50} and \ref{table:ar-l100}. Here $z_{\sf AFF}$ denotes the optimum of the affine restriction \ref{eq:affine-ar}, $z_{\sf LP-AR}$ the optimum of our restriction \ref{eq:lp-ar}, $z_{\sf EG}$ the optimal value of \ref{eq:eg} and $z_{\sf LB}$ the lower-bound given by $AR(\Delta_{\sf AFF})$ as defined above. In terms of running time, $T_{\sf AFF}$ denotes the running time in seconds of the affine restriction, $T_{\sf LP-AR}$ the running time in seconds of our restriction and $T_{\sf EG}$ the running time in seconds of \ref{eq:eg}.

\paragraph{Discussion.} Tables \ref{table:ar-l20}, \ref{table:ar-l50} and \ref{table:ar-l100} show that our restriction \ref{eq:lp-ar} is significantly faster than both the generalization of El Housni and Goyal \cite{housni2021affine} \ref{eq:eg} and the optimal affine policy especially for higher dimensions. For instance, for $n=n'=100$ and $L=20$, our restriction is more than $30000$ times faster to compute than the optimal affine policy. For $n=n'=1000$ and $L=20$, our restriction is more than $85$ times faster than \ref{eq:eg}. Furthermore, for all values of $n$ we consider, our restriction has nearly the same objective value as \ref{eq:eg} (the ratio is less than $10^{-4}$ in all instances we consider). Note that \ref{eq:eg} optimizes over a richer class of affine policies than the affine policies we construct using \ref{eq:lp-ar} (\ref{eq:eg} optimizes over all affine policies of the form $\mathbf{y}(\mathbf{h})=\sum_i \nu_i \mathbf{v}_i h_i + \mathbf{q}$, while the affine policies we construct using \ref{eq:lp-ar} are such that there exists an $\bm{\alpha} \in \mathbb{R}_+^{L}$ such that $\nu_i = \frac{\mathbf{R}^T_i \bm{\alpha}}{\theta_i}$ for all $i \in [m]$) and it is a priori not clear that the two linear programs are equivalent. However, given the numerical findings, we conjecture that the optimal affine policies given by our restriction \ref{eq:lp-pdb} are optimal among the set of affine policies of the form of the form $\mathbf{y}(\mathbf{h})=\sum_i \nu_i \mathbf{v}_i h_i + \mathbf{q}$. Compared to the optimal affine policy, our restriction gives a good approximation that scales well with the dimension. For example, for $L=20$, our affine policy is within at most $20 \%$ for all $n=n' \leq 100$. Finally, compared to the optimum of \ref{eq:ar}, note that affine policies are less than $31\%$ from the Hadjiyiannis et al. \cite{hadjiyiannis2011scenario} lower-bound for all $n=n' \leq 100$ and $L=20$ implying that our restriction is within less than $69\%$ from the Hadjiyiannis et al. \cite{hadjiyiannis2011scenario} lower-bound for all $n=n' \leq 100$ and $L=20$. However, this comparison is with respect to a lower-bound of the optimum and not with respect to the optimum itself and includes the gap between the lower-bound and the optimum as well. Our computational study demonstrates that our restriction gives a good approximation of the two-stage adjustable robust problem that is significantly faster to compute than state-of-the-art approximation methods of this problem.

\begin{table}
    \centering
    \caption{Comparison of the objective value and the running time in seconds between our restriction \ref{eq:lp-ar}, the generalization of El Housni and Goyal \cite{housni2021affine} \ref{eq:eg}, and the optimal affine policy for different values of $n=n'$ and $L=20$. Entries denoted by a star exceeded a time limit of three hours.}
    \centering
        \begin{tabular}{|c|c|c|c|c|c|c|}
     \hline
     &&&&&& \\
     $n=n'$ & $T_{\sf LP-AR}$ & $T_{\sf EG}$ & $T_{\sf AFF}$ & $\frac{z_{\sf LP-AR}}{z_{\sf EG}}$ & $\frac{z_{\sf LP-AR}}{z_{\sf AFF}}$ & $\frac{z_{\sf AFF}}{z_{\sf LB}}$ \\[2ex]
     \hline
     20  &  0.052 &  0.080 & 0.412 &  1.000 &  1.304 &  1.197\\
     \hline
     40  &  0.074 &  0.273 &  21.866 &  1.000 &  1.226 &  1.303\\
     \hline
     60  &  0.104 &  0.609 & 202.732 &  1.000 &  1.248 &  1.310\\
     \hline
     80  &  0.309 &  1.726 &  822.509 &  1.000 &  1.207 &  1.298\\
     \hline
     100  &  0.153 & 3.227 & 4627.206 &  1.000 &  1.196  &  1.299 \\
     \hline
     500  &  0.540 & 22.064  & * &  1.000 &  *  &  *\\
     \hline
     1000  &  1.607 &  137.646 & * &  1.000 &  *  &  *\\
     \hline
    \end{tabular}
    \label{table:ar-l20}
\end{table}

\begin{table}
    \centering
    \caption{Comparison of the objective value and the running time in seconds between our restriction \ref{eq:lp-ar}, the generalization of El Housni and Goyal \cite{housni2021affine} \ref{eq:eg}, and the optimal affine policy for different values of $n=n'$ and $L=50$. Entries denoted by a star exceeded a time limit of three hours.}
    \centering
        \begin{tabular}{|c|c|c|c|c|c|c|}
     \hline
     &&&&&& \\
     $n=n'$ & $T_{\sf LP-AR}$ & $T_{\sf EG}$ & $T_{\sf AFF}$ & $\frac{z_{\sf LP-AR}}{z_{\sf EG}}$ & $\frac{z_{\sf LP-AR}}{z_{\sf AFF}}$ & $\frac{z_{\sf AFF}}{z_{\sf LB}}$ \\[2ex]
     \hline
     20 & 0.038 & 0.072 & 0.507 & 1.000 & 1.389 & 1.347\\
     \hline
     40 & 0.055 & 0.444 & 17.815 & 1.000 & 1.263 & 1.436\\
     \hline
     60 & 0.095 & 1.171 & 193.404 & 1.000 & 1.227 & 1.461\\
     \hline
     80 & 0.087 & 1.426 & 978.970 & 1.000 & 1.212 & 1.472\\
     \hline
     100 & 0.068 & 2.212 & 5649.392 & 1.000 & 1.190 & 1.420\\
     \hline
     500 & 0.580 & 104.684 & * & 1.000 & * & *\\
     \hline
     1000 & 1.756 & 2062.449 & * & 1.000 & * & * \\
     \hline
    \end{tabular}
    \label{table:ar-l50}
\end{table}

\begin{table}
    \centering
    \caption{Comparison of the objective value and the running time in seconds between our restriction \ref{eq:lp-ar}, the generalization of El Housni and Goyal \cite{housni2021affine} \ref{eq:eg}, and the optimal affine policy for different values of $n=n'$ and $L=100$. Entries denoted by a star exceeded a time limit of three hours.}
    \centering
        \begin{tabular}{|c|c|c|c|c|c|c|}
     \hline
     &&&&&& \\
     $n=n'$ & $T_{\sf LP-AR}$ & $T_{\sf EG}$ & $T_{\sf AFF}$ & $\frac{z_{\sf LP-AR}}{z_{\sf EG}}$ & $\frac{z_{\sf LP-AR}}{z_{\sf AFF}}$ & $\frac{z_{\sf AFF}}{z_{\sf LB}}$ \\[2ex]
     \hline
     20 & 0.073 & 0.179 & 1.166 & 1.000 & 1.369 & 1.381\\
     \hline
     40 & 0.046 & 0.605 & 18.804 & 1.000 & 1.285 & 1.521\\
     \hline
     60 & 0.119 & 1.894 & 188.047 & 1.000 & 1.230 & 1.588\\
     \hline
     80 & 0.138 & 2.230 & 1112.060 & 1.000 & 1.200 & 1.561 \\
     \hline
     100 & 0.122 & 5.606 & 2812.385 & 1.000 & 1.186 & 1.634\\
     \hline
     500 & 0.997 & 4986.632 & * & 1.000 & * & *\\
     \hline
     1000 & 1.189 & 9734.252 & * & 1.000 & * & *\\
     \hline
    \end{tabular}
    \label{table:ar-l100}
\end{table}

\section{Conclusion}
In this paper, we consider the class of packing disjoint bilinear programs \ref{eq:pdb} and present an LP rounding based randomized approximation algorithm for this problem. In particular, we show the existence of a near-optimal near-integral solution for \ref{eq:pdb}. We give an LP relaxation from which we obtain such solution using a randomized rounding of an optimal solution. We show that out relaxation is closely related to the reformulation linearization technique (RLT). We apply our ideas to the two-stage adjustable problem \ref{eq:ar} whose separation problem is a variant of \ref{eq:pdb}. While a direct application of the approximation algorithm for \ref{eq:pdb} does not work for \ref{eq:ar}, we derive an LP restriction of \ref{eq:ar}, based on similar ideas, that gives a polylogarithmic approximation of \ref{eq:ar}. We relate our LP restriction to affine policies. In particular, using an optimal solution of the LP restriction, we construct a near-optimal affine policy whose objective value is smaller than the optimal cost of the LP restriction. This proves that affine policies give a polylogarithmic approximation of \ref{eq:ar} and gives a new algorithm to compute near-optimal affine policies. We evaluate the numerical performance of our algorithms for \ref{eq:pdb} and \ref{eq:ar} and show that they are significantly faster and provide good empirical solutions.

\printbibliography

@article{Bertsimas2012OnTP,
  title={On the power and limitations of affine policies in two-stage adaptive optimization},
  author={D. Bertsimas and Vineet Goyal},
  journal={Mathematical Programming},
  year={2012},
  volume={134},
  pages={491-531}
}

@article{gupta2011knapsack,
      author = {Gupta, Anupam and Nagarajan, Viswanath and Ravi, R.},
title = {Robust and MaxMin Optimization under Matroid and Knapsack Uncertainty Sets},
year = {2015},
issue_date = {February 2016},
publisher = {Association for Computing Machinery},
pages={1-21},
address = {New York, NY, USA},
volume = {12},
number = {1},
journal = {ACM Trans. Algorithms},
articleno = {10},
numpages = {21},
keywords = {Robust optimization, submodularity, approximation algorithms, online algorithms}
}

@article{Gupta2009,
  title={Thresholded covering algorithms for robust and max–min optimization},
  author={Anupam Gupta and Viswanath Nagarajan and Ramamoorthi Ravi},
  journal={Mathematical Programming},
  year={2014},
  volume={146},
  pages={583-615}
}

@INPROCEEDINGS{Dhamdhere2005,
  author={Dhamdhere, K. and Goyal, V. and Ravi, R. and Singh, M.},
  booktitle={46th Annual IEEE Symposium on Foundations of Computer Science (FOCS'05)}, 
  title={How to pay, come what may: approximation algorithms for demand-robust covering problems}, 
  year={2005},
  volume={},
  number={},
  pages={367-376}}

@article{bental2004,
    author = {Ben-Tal, A. and Goryashko, A. and Guslitzer, E. and Nemirovski, A.},
    year = {2004},
    pages = {351-376},
    title = {Adjustable Robust Solutions of Uncertain Linear Programs},
    volume = {99},
    journal = {Mathematical Programming}
}

@inproceedings{GoyalElHoussni2017PAP,
 author = {El Housni, Omar and Goyal, Vineet},
title = {Beyond Worst-Case: A Probabilistic Analysis of Affine Policies in Dynamic Optimization},
year = {2017},
booktitle = {Proceedings of the 31st International Conference on Neural Information Processing Systems},
pages = {4759–4767},
}

@article{zhen2018,
title = "Adjustable robust optimization via Fourier-Motzkin elimination",
author = "Jianzhe Zhen and {den Hertog}, Dick and M. Sim",
year = "2018",
volume = "66",
pages = "1086--1100",
journal = "Operations Research",
publisher = "INFORMS Inst.for Operations Res.and the Management Sciences",
number = "4",
}

@article{xu2018copositive,
  title={A copositive approach for two-stage adjustable robust optimization with uncertain right-hand sides},
  author={Xu, Guanglin and Burer, Samuel},
  journal={Computational Optimization and Applications},
  volume={70},
  number={1},
  pages={33--59},
  year={2018},
  publisher={Springer}
}

@article{Bertsimas2016duality,
author = {Bertsimas, Dimitris and Ruiter, Frans},
year = {2016},
pages = {500-511},
title = {Duality in Two-Stage Adaptive Linear Optimization: Faster Computation and Stronger Bounds},
volume = {28},
journal = {INFORMS Journal on Computing}
}

@article{bental2018piecewiseaffine,
      title={A tractable approach for designing piecewise affine policies in two-stage adjustable robust optimization},
    author={ Ben-Tal, Aharon and  El Housni, Omar and Goyal, Vineet },
  journal={Mathematical Programming},
  year={2020},
  volume={182},
  pages={57-102}
}

@inproceedings{housni2020facilitylocation,
  title={On the power of static assignment policies for robust facility location problems},
  author={El Housni, Omar  and Goyal, Vineet and Shmoys, David},
  booktitle={International Conference on Integer Programming and Combinatorial Optimization},
  pages={252-267},
  year={2021},
}

@InProceedings{matchingdrivers,
  author =	{El Housni, Omar and Goyal, Vineet and Hanguir, Oussama and Stein, Clifford},
  title =	{Matching Drivers to Riders: A Two-Stage Robust Approach},
  booktitle ={Approximation, Randomization, and Combinatorial Optimization. Algorithms and Techniques (APPROX/RANDOM 2021)},
  pages = {12:1--12:22},
  year = {2021},
  volume = {207},
}

@article{Bertsimas2012CLT,
author = {Bandi, Chaithanya and Bertsimas, Dimitris},
title={Tractable stochastic analysis in high dimensions via robust optimization},
journal={Mathematical Programming},
year={2012},
day={01},
volume={134},
number={1},
pages={23-70},
}

@article{Bertsimas2004priceofrobustness,
author = {Bertsimas, Dimitris and Sim, Melvyn},
year = {2004},
pages = {35-53},
title = {The Price of Robustness},
volume = {52},
journal = {Operations Research}
}

@article{2014inclusionConstrainedBudgets,
author = {Gounaris, Chrysanthos and Repoussis, Panagiotis and Tarantilis, Christos and Wiesemann, Wolfram and Floudas, Christodoulos},
year = {2014},
pages = {1139-1393},
title = {An Adaptive Memory Programming Framework for the Robust Capacitated Vehicle Routing Problem},
volume = {50},
journal = {Transportation Science}
}

@article{bertsimas2010survey,
      author = {Bertsimas, Dimitris and Brown, David B. and Caramanis, Constantine},
title = {Theory and Applications of Robust Optimization},
year = {2011},
issue_date = {August 2011},
publisher = {Society for Industrial and Applied Mathematics},
address = {USA},
volume = {53},
number = {3},
journal = {SIAM Rev.},
pages = {464–501},
numpages = {38},
keywords = {robustness, robust optimization, applications of robust optimization, adaptable optimization}
}

@InProceedings{feige2007RCES,
author="Feige, Uriel
and Jain, Kamal
and Mahdian, Mohammad
and Mirrokni, Vahab",
editor="Fischetti, Matteo
and Williamson, David P.",
title="Robust Combinatorial Optimization with Exponential Scenarios",
booktitle="Integer Programming and Combinatorial Optimization",
year="2007",
pages="439--453",
}

@article{housni2021affine,
  author = {El Housni, Omar and Goyal, Vineet},
title = {On the Optimality of Affine Policies for Budgeted Uncertainty Sets},
journal = {Mathematics of Operations Research},
volume = {46},
number = {2},
pages = {674-711},
year = {2021},
}

@article{bertsimas2010optimality,
  title={Optimality of affine policies in multistage robust optimization},
  author={Bertsimas, Dimitris and Iancu, Dan A and Parrilo, Pablo A},
  journal={Mathematics of Operations Research},
  volume={35},
  number={2},
  pages={363--394},
  year={2010},
  publisher={INFORMS}
}

@article{bertsimas2015geometric,
author={Bertsimas, Dimitris
and Bidkhori, Hoda},
title={On the performance of affine policies for two-stage adaptive optimization: a geometric perspective},
journal={Mathematical Programming},
year={2015},
day={01},
volume={153},
number={2},
pages={577-594},
}

@article{custic2016bilinear_assignement,
author={Ćustić,Ante and Sokol,Vladyslav and Punnen,Abraham P. and Bhattacharya,Binay},
year={2017},
title={The bilinear assignment problem: complexity and polynomially solvable special cases},
journal={Mathematical Programming},
volume={166},
number={1-2},
pages={185-205},
keywords={Mathematics; Bilinear assignment problem; Quadratic assignment; Bilinear programs; Domination analysis; Heuristics; Polynomially solvable cases; Linearization; 90C09; 90C20; 68Q17; Operations research; Polynomials; Production scheduling; Job shops; Assignment problem},
}

@ARTICLE{Firouzbakht2016bimatrixgameswireless,
  author={Firouzbakht, Koorosh and Noubir, Guevara and Salehi, Masoud},
  journal={IEEE Transactions on Communications}, 
  title={Linearly Constrained Bimatrix Games in Wireless Communications}, 
  year={2016},
  volume={64},
  number={1},
  pages={429-440},}

@article{chen2007complexity,
author = {Chen, Xi and Deng, Xiaotie and Teng, Shang-Hua},
title = {Settling the Complexity of Computing Two-Player Nash Equilibria},
year = {2009},
issue_date = {May 2009},
publisher = {Association for Computing Machinery},
address = {New York, NY, USA},
volume = {56},
number = {3},
pages = {1-57},
journal = {J. ACM},
articleno = {14},
numpages = {57},
keywords = {Lemke-Howson algorithm, PPAD-completeness, smoothed analysis, Brouwer's fixed point, Arrow-Debreu market, Nash equilibrium, Sperner's lemma, Two-player game}
}

@article{Soland1974FLwithcocavecosts,
author = {Soland, Richard},
year = {1974},
pages = {373-382},
title = {Optimal Facility Location with Concave Costs},
volume = {22},
journal = {Operations Research}
}

@article{Rebennack2009fixedchargenetw,
author = {Rebennack, Steffen and Nahapetyan, Artyom and Pardalos, Panos},
year = {2009},
pages = {347-355},
title = {Bilinear modeling solution approach for fixed charge network flow problems},
volume = {3},
journal = {Optimization Letters}
}

@article{Lim2007multicomflow,
author = { Churlzu   Lim  and  J.   Cole   Smith },
title = {Algorithms for discrete and continuous multicommodity flow network interdiction problems},
journal = {IIE Transactions},
volume = {39},
number = {1},
pages = {15-26},
year  = {2007},
publisher = {Taylor & Francis},
}

@article{Gupte2017pooling,
  title={Relaxations and discretizations for the pooling problem},
  author={Akshay Gupte and Shabbir Ahmed and Santanu S. Dey and Myun-Seok Cheon},
  journal={Journal of Global Optimization},
  year={2017},
  volume={67},
  pages={631-669}
}

@article{Harjunkoski1998trimloss,
author = {Harjunkoski, I. and Westerlund, Tapio and Pörn, Ray and Skrifvars, Hans},
year = {1998},
pages = {594-603},
title = {Different transformations for solving non-convex trim-loss problems by MINLP},
volume = {105},
journal = {European Journal of Operational Research},
}

@article{MANGASARIAN1964twopersongame,
title = {Two-person nonzero-sum games and quadratic programming},
journal = {Journal of Mathematical Analysis and Applications},
volume = {9},
number = {3},
pages = {348-355},
year = {1964},
author = {O.L Mangasarian and H Stone},
}

@article{vaishexpandingpolytopes,
author = {Vaish, Harish and Shetty, C. M.},
title = {The bilinear programming problem},
journal = {Naval Research Logistics Quarterly},
volume = {23},
number = {2},
pages = {303-309},
year = {1976}
}

@article{Konno1976cuttingplane,
  title={A cutting plane algorithm for solving bilinear programs},
  author={Hiroshi Konno},
  journal={Mathematical Programming},
  year={1976},
  volume={11},
  pages={14-27}
}

@article{robustZhen2018,
  title={Disjoint Bilinear Optimization: A Two-Stage Robust Optimization Perspective},
  author={Zhen, Jianzhe and Marandi, Ahmadreza and de Moor, Danique and den Hertog, Dick and Vandenberghe, Lieven},
  journal={INFORMS Journal on Computing},
  year={2022},
  publisher={INFORMS}
}

@article{Thieu88concave,
  author    = {Tran Vu Thieu},
  title     = {A note on the solution of bilinear programming problems by reduction to concave minimization},
  journal   = {Mathematical Programming},
  volume    = {41},
  number    = {1-3},
  pages     = {249--260},
  year      = {1988},
  bibsource = {dblp computer science bibliography, https://dblp.org}
}

@article{GeoffrionGenbenders72,
author = {Geoffrion, Arthur},
year = {1972},
pages = {237-260},
title = {Generalized Benders Decomposition},
volume = {10},
journal = {Journal of Optimization Theory and Applications},
}

@article{chernoff52,
author = {Herman Chernoff},
title = {{A Measure of Asymptotic Efficiency for Tests of a Hypothesis Based on the sum of Observations}},
volume = {23},
journal = {The Annals of Mathematical Statistics},
number = {4},
publisher = {Institute of Mathematical Statistics},
pages = {493 -- 507},
year = {1952},
}

@inproceedings{Schaefer78,
author = {Schaefer, Thomas J.},
title = {The Complexity of Satisfiability Problems},
year = {1978},
publisher = {Association for Computing Machinery},
address = {New York, NY, USA},
booktitle = {Proceedings of the Tenth Annual ACM Symposium on Theory of Computing},
pages = {216–226},
numpages = {11},
location = {San Diego, California, USA},
series = {STOC '78}
}

@article{sherali1992new,
  title={A new reformulation-linearization technique for bilinear programming problems},
  author={Sherali, Hanif D and Alameddine, Amine},
  journal={Journal of Global optimization},
  volume={2},
  pages={379--410},
  year={1992},
  publisher={Springer}
}

@article{adams1986tight,
  title={A tight linearization and an algorithm for zero-one quadratic programming problems},
  author={Adams, Warren P and Sherali, Hanif D},
  journal={Management Science},
  volume={32},
  number={10},
  pages={1274--1290},
  year={1986},
  publisher={INFORMS}
}

@article{adams1990linearization,
  title={Linearization strategies for a class of zero-one mixed integer programming problems},
  author={Adams, Warren P and Sherali, Hanif D},
  journal={Operations Research},
  volume={38},
  number={2},
  pages={217--226},
  year={1990},
  publisher={INFORMS}
}

@article{adams1993mixed,
  title={Mixed-integer bilinear programming problems},
  author={Adams, Warren P and Sherali, Hanif D},
  journal={Mathematical Programming},
  volume={59},
  pages={279--305},
  year={1993},
  publisher={Springer}
}

@article{sherali2000reduced,
  title={Reduced first-level representations via the reformulation-linearization technique: results, counterexamples, and computations},
  author={Sherali, Hanif D and Smith, Jonathan C and Adams, Warren P},
  journal={Discrete Applied Mathematics},
  volume={101},
  number={1-3},
  pages={247--267},
  year={2000},
  publisher={Elsevier}
}

@article{audet2000branch,
  title={A branch and cut algorithm for nonconvex quadratically constrained quadratic programming},
  author={Audet, Charles and Hansen, Pierre and Jaumard, Brigitte and Savard, Gilles},
  journal={Mathematical Programming},
  volume={87},
  pages={131--152},
  year={2000},
  publisher={Springer}
}

@article{tawarmalani2010strong,
  title={Strong valid inequalities for orthogonal disjunctions and bilinear covering sets},
  author={Tawarmalani, Mohit and Richard, Jean-Philippe P and Chung, Kwanghun},
  journal={Mathematical Programming},
  volume={124},
  number={1-2},
  pages={481--512},
  year={2010},
  publisher={Springer}
}

@article{gupte2013solving,
  title={Solving mixed integer bilinear problems using MILP formulations},
  author={Gupte, Akshay and Ahmed, Shabbir and Cheon, Myun Seok and Dey, Santanu},
  journal={SIAM Journal on Optimization},
  volume={23},
  number={2},
  pages={721--744},
  year={2013},
  publisher={SIAM}
}

@article{freire2012integer,
  title={An integer linear programming approach for bilinear integer programming},
  author={Freire, Alexandre S and Moreno, Eduardo and Vielma, Juan Pablo},
  journal={Operations Research Letters},
  volume={40},
  number={2},
  pages={74--77},
  year={2012},
  publisher={Elsevier}
}

@article{sherali1990hierarchy,
  title={A hierarchy of relaxations between the continuous and convex hull representations for zero-one programming problems},
  author={Sherali, Hanif D and Adams, Warren P},
  journal={SIAM Journal on Discrete Mathematics},
  volume={3},
  number={3},
  pages={411--430},
  year={1990},
  publisher={SIAM}
}

@article{sherali1994hierarchy,
  title={A hierarchy of relaxations and convex hull characterizations for mixed-integer zero—one programming problems},
  author={Sherali, Hanif D and Adams, Warren P},
  journal={Discrete Applied Mathematics},
  volume={52},
  number={1},
  pages={83--106},
  year={1994},
  publisher={Elsevier}
}

@article{sherali2009reformulation,
  title={A reformulation-linearization technique (rlt) for semi-infinite and convex programs under mixed 0-1 and general discrete restrictions},
  author={Sherali, Hanif D and Adams, Warren P},
  journal={Discrete Applied Mathematics},
  volume={157},
  number={6},
  pages={1319--1333},
  year={2009},
  publisher={Elsevier}
}

@article{sherali1992global,
  title={A global optimization algorithm for polynomial programming problems using a reformulation-linearization technique},
  author={Sherali, Hanif D and Tuncbilek, Cihan H},
  journal={Journal of Global Optimization},
  volume={2},
  pages={101--112},
  year={1992},
  publisher={Springer}
}

@inproceedings{hadjiyiannis2011scenario,
  title={A scenario approach for estimating the suboptimality of linear decision rules in two-stage robust optimization},
  author={Hadjiyiannis, Michael J and Goulart, Paul J and Kuhn, Daniel},
  booktitle={2011 50th IEEE Conference on Decision and Control and European Control Conference},
  pages={7386--7391},
  year={2011},
  organization={IEEE}
}
\newpage

\begin{appendices}

\section{Chernoff bounds}
\label{appendix:chernoff-bounds}
\noindent \textbf{Proof of Chernoff bounds $\hyperref[lm:chernoff-bounds]{(a)}$.} From Markov's inequality we have for all $t>0$,
 $$ \mathbb{P}( \Xi \geq (1+ \delta ) s ) = \mathbb{P}( e^{t\Xi} \geq e^{t(1+\delta)s} ) \leq \frac{\mathbb{E}(e^{t\Xi})}{e^{t(1+\delta)s}}.$$
 Denote $p_i$ the parameter of the Bernoulli ${\chi}_i$. By independence, we have
 $$  \mathbb{E}(e^{t\Xi})= \prod_{i=1}^r \mathbb{E} (e^{t \epsilon_i {\chi}_i}) = \prod_{i=1}^r \left( p_i  e^{t \epsilon_i }+1-p_i \right)
 \leq \prod_{i=1}^r \exp \left( p_i  (e^{t \epsilon_i }-1) \right)$$
 where the inequality holds because $1+x \leq e^x$ for all $x \in \mathbb{R}$.
 By taking $t=\ln( 1+ \delta) >0$, the right hand side becomes
 \begin{align*}
 \prod_{i=1}^r \exp \left( p_i  ((1+\delta)^{ \epsilon_i }-1) \right) \leq \prod_{i=1}^r \exp \left( p_i  \delta \epsilon_i \right) =\exp \left(   \delta  \cdot \mathbb{E}(\Xi ) \right) \leq e^{\delta s} ,\\
 \end{align*}
where the first inequality holds because $ (1+x)^{\epsilon} \leq 1+ \alpha x$ for any $x \geq 0$ and $\epsilon \in [0,1]$ and the second one because $s \geq \mathbb{E}(\Xi )= \sum_{i=1}^r \epsilon_i p_i$. Hence, 
we have  $$  \mathbb{E}(e^{t\Xi}) \leq e^{\delta s}. $$
On the other hand, $$e^{t(1+\delta)s} = (1+\delta) ^{(1+\delta)s}.$$
Therefore,
 $$ \mathbb{P}( \Xi \geq (1+ \delta ) s ) \leq  \left(  \frac{e^{\delta s}}{(1+\delta) ^{(1+\delta)s}} \right) =\left(  \frac{e^{\delta}}{(1+\delta)^{1+\delta} }    \right)^s. $$ \qed

\noindent \textbf{Proof of Chernoff bounds $\hyperref[lm:chernoff-bounds]{(b)}$.} From Markov's inequality we have for all $t< 0$,
 $$ \mathbb{P}( \Xi \leq (1- \delta ) \mathbb{E}(\Xi) ) = \mathbb{P}( e^{t\Xi} \geq e^{t(1-\delta)\mathbb{E}(\Xi)} ) \leq \frac{\mathbb{E}(e^{t\Xi})}{e^{t(1-\delta) \mathbb{E}(\Xi)}}.$$
 Denote $p_i$ the parameter of the Bernoulli ${\chi}_i$. By independence, we have
 $$  \mathbb{E}(e^{t\Xi})=    \prod_{i=1}^r \mathbb{E} (e^{t \epsilon_i {\chi}_i}) = \prod_{i=1}^r \left( p_i  e^{t \epsilon_i }+1-p_i \right)
 \leq  \prod_{i=1}^r \exp \left( p_i  (e^{t \epsilon_i }-1) \right) ,$$
 where the inequality holds because $1+x \leq e^x$ for all $x \in \mathbb{R}$.
We take $t=\ln( 1- \delta) < 0$. We have $ t \leq - \delta$, hence,
 \begin{align*}
\prod_{i=1}^r \exp \left( p_i  (e^{ t\epsilon_i }-1) \right)   =\prod_{i=1}^r \exp \left( p_i  ((1-\delta)^{ \epsilon_i }-1) \right) 
\leq \prod_{i=1}^r \exp \left( -p_i  \delta \epsilon_i \right),
 \end{align*}
where the  inequality holds because $ (1-x)^{\epsilon} \leq 1- \epsilon x$ for any $ 0 < x <1$ and $\epsilon \in [0,1]$. Therefore,
 $$  \mathbb{E}(e^{t\Xi}) \leq       \prod_{i=1}^r \exp \left( -p_i  \delta \epsilon_i \right)   =          e^{-\delta \mathbb{E}(\Xi)}. $$
On the other hand, $$e^{t(1-\delta) \mathbb{E}(\Xi)} = (1-\delta) ^{(1-\delta)\mathbb{E}(\Xi)}.$$
Therefore,
 $$ \mathbb{P}( \Xi \leq (1- \delta ) \mathbb{E}(\Xi) ) \leq  \left(  \frac{e^{-\delta \mathbb{E}(\Xi)}}{(1-\delta) ^{(1-\delta)\mathbb{E}(\Xi)}} \right) =\left(  \frac{e^{-\delta}}{(1-\delta)^{1-\delta} }    \right)^{\mathbb{E}(\Xi)}. $$
Finally, we have for any $0<\delta < 1$,
$$ \ln (1- \delta ) \geq  - \delta + \frac{\delta^2}{2}$$
which implies 
$$ (1- \delta) \cdot \ln (1- \delta ) \geq  - \delta + \frac{\delta^2}{2}$$
and consequently
$$\left(  \frac{e^{-\delta}}{(1-\delta)^{1-\delta} }    \right)^{\mathbb{E}(\Xi)} \leq e^{  \frac{-{\delta}^2 \mathbb{E}(\Xi)}{2}}. $$ \qed

\section{Proof of Theorem~\ref{th:inapproximability-cdb}}
\label{appendix:proof-inapproximability}
Our proof uses a polynomial time transformation from the \textit{Monotone Not-All-Equal 3-satisfiability} (MNAE3SAT) NP-complete problem (Schaefer \cite{Schaefer78}). In the (MNAE3SAT), we are given a collection of Boolean variables and a collection of clauses, each of which combines three variables. (MNAE3SAT) is the problem of determining if there exists a truth assignment where each close has at least one true and one false literal. This is a subclass of the \textit{Not-All-Equal 3-satisfiability} problem where the variables are never negated.

Consider an instance $\mathcal{I}$ of (MNAE3SAT), let $\mathcal{V}$ be the set of variables of $\mathcal{I}$ and let $\mathcal{C}$ be the set of clauses. Let $\mathbf{A} \in \{0,1\}^{|\mathcal{C}| \times |\mathcal{V}|}$ be the variable-clause incidence matrix such that for every variable $v \in \mathcal{V} $ and clause $c \in \mathcal{C}$ we have $A_{cv}=1$ if and only if the variable $v$ belongs to the clause $c$. We consider the following instance of \ref{eq:cdb} denoted by \ref{eq:cdbi},
\begin{gather}
\label{eq:cdbi}
\tag{$\mathcal{I}'$}
\begin{aligned}
\min_{\mathbf{x}, \mathbf{y}} & \quad \sum_{v \in \mathcal{V}} x_v y_v \\
s.t & \quad \mathbf{A}\mathbf{x} \geq \mathbf{e}, \quad \mathbf{x} \geq \mathbf{0} \\
& \quad \mathbf{A}\mathbf{y} \geq \mathbf{e}, \quad \mathbf{y} \geq \mathbf{0}.
\end{aligned}
\end{gather}

Let $z_{\mathcal{I}'}$ denote the optimum of \ref{eq:cdbi}. We show that $\mathcal{I}$ has a truth assignment where each clause has at least one true and one false literal if and only if $z_{\mathcal{I}'} = 0$.

First, suppose $z_{\mathcal{I}'} = 0$. Let $(\mathbf{x}, \mathbf{y})$ be an optimal solution of \ref{eq:cdbi}. Let $\Tilde{\mathbf{x}}$ such that $\Tilde{x}_v = \mathds{1}_{\{x_v > 0\}}$ for all $v \in \mathcal{V}$. 
We claim that $(\Tilde{\mathbf{x}}, \mathbf{e} - \Tilde{\mathbf{x}})$ is also an optimal solution of $\mathcal{I}$. In fact, for all $c \in \mathcal{C}$, we have, $$\sum_{v \in \mathcal{V}} A_{cv} x_v \geq 1,$$ hence, $$\sum_{v \in \mathcal{V} } A_{cv} \min(x_v, 1) \geq 1,$$ since the entries of $\mathbf{A}$ are in $\{0,1\}$. Therefore, $$\sum_{v \in \mathcal{V} } A_{cv} \Tilde{x}_v \geq \sum_{v \in \mathcal{V} } A_{cv} \min(x_v, 1) \geq 1.$$ Finally, $\mathbf{A} \Tilde{\mathbf{x}} \geq \mathbf{e}$. Similarly, for all $c \in \mathcal{C}$ we have, $$\sum_{v \in \mathcal{V} } A_{cv} y_v \geq 1,$$ hence, $$\sum_{v \in \mathcal{V} } A_{cv} \min(y_v, 1) \geq 1,$$ since the entries of $\mathbf{A}$ are in $\{0,1\}$. Therefore, $$\sum_{v \in \mathcal{V} } A_{cv} (1- \Tilde{x}_v) \geq \sum_{v \in \mathcal{V} } A_{cv} (1- \Tilde{x}_v) \min(y_v, 1) = \sum_{v \in \mathcal{V} } A_{cv} \min(y_v, 1) \geq 1.$$ where the equality follows from the fact that $y_v = 0$ for all $v$ such that $\Tilde{x}_v = 1$. Note that $\sum_{v \in \mathcal{V} } x_vy_v = z_{\mathcal{I}'} = 0$. This implies that $\mathbf{A}(\mathbf{e} - \Tilde{\mathbf{x}}) \geq \mathbf{e}$. Therefore, $(\Tilde{\mathbf{x}}, \mathbf{e} - \Tilde{\mathbf{x}})$ is a feasible solution of \ref{eq:cdbi} with objective value $\sum_{v \in \mathcal{V} } \Tilde{x}_v(1-\Tilde{x}_v) = 0 = z_{\mathcal{I}'}$. It is therefore an optimal solution. Consider now the truth assignment where a variable $v$ is set to be true if and only if $\Tilde{x}_v = 1$. We show that each clause in such assignment has at least one true and one false literal. In particular, consider a clause $c \in \mathcal{C}$, since, $$\sum_{v \in \mathcal{V} } A_{cv} \Tilde{x}_v \geq 1,$$ there must be a variable $v$ belonging to the clause $c$ such that $\Tilde{x}_v = 1$. Similarly, since $$\sum_{v \in \mathcal{V}} A_{cv} (1-\Tilde{x}_v) \geq 1,$$ there must be a variable $v$ belonging to the clause $c$ such that $\Tilde{x}_v = 0$. This implies that our assignment is such that the clause $c$ has at least one true and one false literal.

Conversely, suppose there exists a truth assignment of ${\mathcal I}$ where each clause has at least one false and one true literal. We show that $z_{\mathcal{I}'} = 0$. In particular, define $\mathbf{x}$ such that for all $v \in \mathcal{V}$ we have $x_v = 1$ if and only if $v$ is assigned true. We have for all $c \in \mathcal{C}$, $$\sum_{v \in \mathcal{V} } A_{cv} x_v \geq 1,$$ since at least one of the variables of $c$ is assigned true. And, $$\sum_{v \in \mathcal{V} } A_{cv} (1-x_v) \geq 1,$$ since at least one of the variables of $c$ is assigned false. Hence, $(\mathbf{x}, \mathbf{e} - \mathbf{x})$ is feasible for \ref{eq:cdbi} with objective value $\sum_{v \in \mathcal{V}} x_v(1-x_v) = 0$. It is therefore an optimal solution and $z_{\mathcal{I}'} = 0$.

If there exists a polynomial time algorithm approximating \ref{eq:cdb} to some finite factor, such an algorithm can be used to decide in polynomial time whether $z_{\mathcal{I}'} = 0$ or not, which is equivalent to solving $\mathcal{I}$ in polynomial time; a contradiction. \qed

\section{Derivation of the first level relaxation of the RLT hierarchy.}
\label{appendix:1RLT}

Let us begin by writing \ref{eq:pdb} in the following equivalent epigraph form,

\begin{gather*}
\begin{aligned}
\max_{\mathbf{x}, \mathbf{y}, \mathbf{u}, \mathbf{v}, \mathbf{w}} \quad & \sum_{i=1}^n \theta_i \gamma_i u_{ii}\\
& u_{ij} \leq x_i y_j, \quad \forall i, j\\
& v_{i,j} \leq x_i x_j, \quad \forall i, j\\
& w_{i,j} \leq y_i y_j, \quad \forall i, j\\
& \sum_{i=1}^n \theta_i\mathbf{P}_ix_i \leq \mathbf{p},\\
& \sum_{i=1}^n \gamma_i\mathbf{Q}_iy_i \leq \mathbf{q},\\
& 0 \leq x_i \leq 1, \quad 0 \leq y_i \leq 1, \quad \forall i.\\
\end{aligned}
\end{gather*}

\vspace{2mm}
\noindent \textbf{Reformulation phase.}
In the reformulation phase we add to the above program the polynomial constraints we get from multiplying all the linear constraints by linear constraints by $x_i$, $y_i$, $(1-x_i)$ and $(1-y_i)$ for all $i \in [n]$. We get the following equivalent formulation of \ref{eq:pdb},


\begin{gather*}
\begin{aligned}
\max_{\mathbf{x}, \mathbf{y}, \mathbf{u}, \mathbf{v}, \mathbf{w}} \quad & \sum_{i=1}^n \theta_i \gamma_j u_{ii}\\
& u_{ij} \leq x_i y_j, \quad \forall i, j,\\
& v_{i,j} \leq x_i x_j, \quad \forall i, j\\
& w_{i,j} \leq y_i y_j, \quad \forall i, j\\
& \sum_{j=1}^n \theta_j\mathbf{P}_jx_j \leq \mathbf{p},\\
& \sum_{j=1}^n \theta_j\mathbf{P}_{j}x_jx_i \leq \mathbf{p}x_i,\\
&\sum_{j=1}^n \theta_j\mathbf{P}_{j}(x_j- x_jx_i) \leq \mathbf{p} (1- x_i)\\
& \sum_{j=1}^n \theta_j\mathbf{P}_{j}x_jy_i \leq \mathbf{p}y_i,\\
&\sum_{j=1}^n \theta_j\mathbf{P}_{j}(x_j- y_ix_j) \leq \mathbf{p} (1- y_i)\\
& \sum_{j=1}^n \gamma_j\mathbf{Q}_jy_j \leq \mathbf{q},\\
& \sum_{j=1}^n \gamma_j\mathbf{Q}_{j}y_jx_i \leq \mathbf{q}x_i,\\
&\sum_{j=1}^n \gamma_j\mathbf{Q}_{j}(y_j- y_jx_i) \leq \mathbf{p} (1- x_i)\\
& \sum_{j=1}^n \gamma_j\mathbf{Q}_{j}y_jy_i \leq \mathbf{q}y_i,\\
&\sum_{j=1}^n \gamma_j\mathbf{Q}_{j}(y_j- y_jy_i) \leq \mathbf{p} (1- y_i)\\
& 0 \leq x_i \leq 1, \quad 0 \leq y_i \leq 1, \quad \forall i\\
& x_ix_j \leq x_i, \quad x_ix_j \leq x_j,\\ 
& y_iy_j \leq y_i, \quad y_iy_j \leq y_j,\\
&x_iy_j \leq x_i, \quad x_iy_j \leq y_j,\\
\end{aligned}
\end{gather*}

\vspace{2mm}
\noindent \textbf{Linearization phase.}
We replace the bilinear terms $x_iy_j$, $x_ix_j$ and $y_iy_j$ in the above LP with their respective lower-bounds $u_{ij}$, $v_{i,j}$, $w_{i,j}$. We get the following linear relaxation of \ref{eq:pdb},


\begin{gather*}
\begin{aligned}
\max_{\mathbf{x}, \mathbf{y}, \mathbf{u}, \mathbf{v}, \mathbf{w}} \quad & \sum_{i=1}^n \theta_i \gamma_j u_{ii}\\
& \sum_{j=1}^n \theta_j\mathbf{P}_jx_j \leq \mathbf{p},\\
& \sum_{j=1}^n \theta_j\mathbf{P}_{j}v_{i,j} \leq \mathbf{p}x_i,\\
&\sum_{j=1}^n \theta_j\mathbf{P}_{j}(x_j- v_{i,j}) \leq \mathbf{p} (1- x_i)\\
& \sum_{j=1}^n \theta_j\mathbf{P}_{j}u_{ji} \leq \mathbf{p}y_i,\\
&\sum_{j=1}^n \theta_j\mathbf{P}_{j}(x_j- u_{ji}) \leq \mathbf{p} (1- y_i)\\
& \sum_{j=1}^n \gamma_j\mathbf{Q}_jy_j \leq \mathbf{q},\\
& \sum_{j=1}^n \gamma_j\mathbf{Q}_{j}u_{ij} \leq \mathbf{q}x_i,\\
&\sum_{j=1}^n \gamma_j\mathbf{Q}_{j}(y_j- u_{ij}) \leq \mathbf{q} (1- x_i)\\
& \sum_{j=1}^n \gamma_j\mathbf{Q}_{j}w_{i,j} \leq \mathbf{q}y_i,\\
&\sum_{j=1}^n \gamma_j\mathbf{Q}_{j}(y_j- w_{i,j}) \leq \mathbf{q} (1- y_i)\\
& 0 \leq x_i \leq 1, \quad 0 \leq y_i \leq 1, \quad \forall i\\
& v_{i,j} \leq x_i, \quad v_{i,j} \leq x_j,\\ 
& w_{i,j} \leq y_i, \quad w_{i,j} \leq y_j,\\
&u_{ij} \leq x_i, \quad u_{ij} \leq y_j.\\
\end{aligned}
\end{gather*}
The above LP is known as the first level relaxation of the RLT hierarchy. The above LP can be further simplified as follows:  the constraints in $\mathbf{v}$ and $\mathbf{w}$ in the above are redundant and can be removed without loss of generality. In fact, for any feasible solution $\mathbf{x}, \mathbf{y}, \mathbf{u}$ of the resulting LP, one can take $v_{i,j}=x_ix_j$ and $w_{i,j}=y_iy_j$ to get a feasible solution of the above LP with same cost, and conversely for every feasible solution $\mathbf{x}, \mathbf{y}, \mathbf{u}, \mathbf{v}, \mathbf{w}$ of the above LP, the solution $\mathbf{x}, \mathbf{y}, \mathbf{u}$ is a feasible solution of the resulting LP with same cost. Also, the first (resp. sixth) set of constraints in the above LP can be obtained by summing the fourth and fifth (resp. seventh and eighth) set of constraints and can therefore be removed as well. We get the following equivalent LP relaxation of \ref{eq:pdb} that we refer to in this paper as the first level relaxation of the RLT hierarchy,

\begin{gather}
\label{eq:1rlt}
\tag{FL-RLT}
\begin{aligned}
\max_{\mathbf{x}, \mathbf{y}, \mathbf{u}} \quad & \sum_{i=1}^n \theta_i \gamma_j u_{ii}\\
& \sum_{j=1}^n \theta_j\mathbf{P}_{j}u_{ji} \leq \mathbf{p}y_i,\\
&\sum_{j=1}^n \theta_i\mathbf{P}_{j}(x_j- u_{ji}) \leq \mathbf{p} (1- y_i)\\
& \sum_{j=1}^n \gamma_j\mathbf{Q}_{j}u_{ij} \leq \mathbf{q}x_i,\\
&\sum_{j=1}^n \gamma_j\mathbf{Q}_{j}(y_j- u_{ij}) \leq \mathbf{q} (1- x_i)\\
&0 \leq u_{ij} \leq x_i \leq 1, \quad 0 \leq u_{ij} \leq y_j \leq 1.\\
\end{aligned}
\end{gather}

\section{Proof of Lemma~\ref{lm:structural-property-ar}.}
\label{appendix:proof_structural_lemma}

Let $\bm{\omega}^*$ be an optimal solution of $\hyperref[eq:qlp]{\mathcal{Q}^{\sf{LP}}(\beta\mathbf{x})}$, consider $\Tilde{\omega}_1, \dots, \Tilde{\omega}_m$ i.i.d. Bernoulli random variables such that $\mathbb{P}(\Tilde{\omega}_i = 1) = \omega^*_i$ for all $i \in [m]$ and let $(\mathbf{h}, \mathbf{z})$ such that $h_i = \frac{\gamma_i\Tilde{\omega}_i}{\beta}$ and $z_i = \frac{\theta_i\Tilde{\omega}_i}{\eta}$ for all $i \in [m]$. We show that $(\mathbf{h}, \mathbf{z})$ satisfies the properties~\eqref{eq:properties-ar} with a constant probability. In particular,
we have, 

\begin{gather}
\tag*{ }
\begin{aligned}
    \mathbb{P}(\sum_{i=1}^m \mathbf{b}_i z_i \nleq \mathbf{d}) 
    &= \mathbb{P}\left(\sum_{i=1}^m \theta_i\mathbf{b}_i\frac{\Tilde{\omega}_i}{\eta} \nleq \mathbf{d}\right) \\
    & \leq \sum_{j=1}^{n} \mathbb{P}\left(\sum_{i=1}^m \theta_i B_{ij}\frac{\Tilde{\omega}_i}{\eta} > d_j\right) \\
    &= \sum_{j \in [n]: d_j > 0} \mathbb{P}\left(\sum_{i=1}^m \frac{\theta_i B_{ij}}{d_j}\Tilde{\omega}_i > \eta\right) \\
    &\leq \sum_{j \in [n]: d_j > 0} \left(\frac{e^{\eta-1}}{\eta^{\eta}}\right)\\
    &\leq n \frac{e^{\eta-1}}{\eta^{\eta}},
\end{aligned}
\end{gather}
where the first inequality  follows from a union bound on n constraints. The second equality holds because for all $j \in [n]$ such that $d_j = 0$ we have  $$\mathbb{P}\left(\sum_{i=1}^m \theta_i B_{ij}\frac{\Tilde{\omega}_i}{\eta} > d_j\right) = 0.$$ Note that $d_j = 0$ implies $$\sum_{i=1}^m \theta_i B_{ij}\frac{\omega^*_i}{\eta} = 0,$$ by feasibility of $\bm{\omega}^*$ in $\hyperref[eq:qlp]{\mathcal{Q}^{\sf{LP}}(\beta\mathbf{x})}$. Therefore, $$\sum_{i=1}^m \theta_i B_{ij}\frac{\Tilde{\omega}_i}{\eta} = 0,$$ almost surely. The second inequality follows from the Chernoff bounds $\hyperref[lm:chernoff-bounds]{(a)}$ with $\delta = \eta - 1$ and $s=1$. In particular, $\frac{\theta_i B_{ij}}{d_j} \in [0,1]$ by definition of $\theta_i$ for all $i \in [m]$ and $j \in [n]$ such that $d_j > 0$ and for all $j \in [n]$ such that $d_j > 0$ we have, $$\mathbb{E}\left[\sum_{i=1}^m \frac{\theta_i B_{ij}}{d_j}\Tilde{\omega}_i\right] = \sum_{i=1}^m \frac{\theta_i B_{ij}}{d_j}\omega^*_i \leq 1,$$ by feasibility of $\bm{\omega}^*$. Next, note that $$\frac{e^{\eta-1}}{\eta^{\eta}} = O(\frac{1}{n^2}).$$ Therefore, there exists a constant $c > 0$ such that,
\begin{gather}
    \label{eq:property1-ar}
    \mathbb{P}(\sum_{i=1}^m \mathbf{b}_iz_i \nleq \mathbf{d})  \leq \frac{c}{n}.
\end{gather}
By a similar argument there exists a constant $c' > 0$,
\begin{gather}
\label{eq:property2-ar}
    \begin{aligned}
    \mathbb{P}(\sum_{i=1}^m \mathbf{R}_ih_i \nleq \mathbf{q}) \leq \frac{c'}{L},
\end{aligned}
\end{gather}
Finally we have,
\begin{gather}
\tag*{ }
\begin{aligned}
    &\mathbb{P}\left(\sum_{i=1}^m h_i z_i - (\mathbf{a}_i^T\mathbf{x}) z_i < \frac{1}{2\eta\beta}\mathcal{Q}^{\sf{LP}}(\beta\mathbf{x})\right) \\
    = &\mathbb{P}\left(\sum_{i=1}^m \frac{\theta_i\gamma_i\Tilde{\omega}^2_i}{\eta\beta} - \theta_i(\mathbf{a}_i^T\mathbf{x}) \frac{\Tilde{\omega}_i}{\eta} < \frac{1}{2\eta\beta}\mathcal{Q}^{\sf{LP}}(\beta\mathbf{x})\right) \\
    = &\mathbb{P}\left(\frac{1}{\eta\beta}\sum_{i=1}^m (\theta_i\gamma_i -  \theta_i\mathbf{a}_i^T(\beta\mathbf{x}))\Tilde{\omega}_i < \frac{1}{2\eta\beta}\mathcal{Q}^{\sf{LP}}(\beta\mathbf{x})\right).
\end{aligned}
\end{gather}
Let $\mathcal{I}$ denote the subset of indices $i \in [m]$ such that $$\theta_i\gamma_i -  \theta_i\mathbf{a}_i^T(\beta\mathbf{x}) \geq 0.$$ Since $\bm{\omega}^*$ is the optimal solution of the maximization problem $\hyperref[eq:qlp]{\mathcal{Q}^{\sf{LP}}(\beta\mathbf{x})}$ we can suppose without loss of generality that $\omega^*_i = 0$ for all $i \notin \mathcal{I}$. In fact, the packing constraints of $\hyperref[eq:qlp]{\mathcal{Q}^{\sf{LP}}(\beta\mathbf{x})}$ are down-closed (i.e., for every $\bm{\omega} \leq \bm{\omega}'$, if $\bm{\omega}'$ is feasible then $\bm{\omega}$ is also feasible), hence setting $\omega^*_i = 0$ for all $i \notin \mathcal{I}$ still gives a feasible solution of $\hyperref[eq:qlp]{\mathcal{Q}^{\sf{LP}}(\beta\mathbf{x})}$ and can only increase the objective value. Hence, $\Tilde{\omega}_i = 0$ almost surely for all $i \notin \mathcal{I}$ and we have,
\begin{gather}
\label{eq:property3-ar}
\begin{aligned}
    &\mathbb{P}\left(\sum_{i=1}^m h_i z_i - (\mathbf{a}_i^T\mathbf{x}) z_i < \frac{1}{2\eta\beta}\mathcal{Q}^{\sf{LP}}(\beta\mathbf{x})\right) \\
    = &\mathbb{P}\left(\frac{1}{\eta\beta}\sum_{i \in \mathcal{I}} (\theta_i\gamma_i -  \theta_i\mathbf{a}_i^T(\beta\mathbf{x}))\Tilde{\omega}_i < \frac{1}{2\eta\beta}\mathcal{Q}^{\sf{LP}}(\beta\mathbf{x})\right)\\
    = &\mathbb{P}\left(\sum_{i \in \mathcal{I}} \frac{(\theta_i\gamma_i -  \theta_i\mathbf{a}_i^T(\beta\mathbf{x}))}{\mathcal{Q}^{\sf{LP}}(\beta\mathbf{x})}\Tilde{\omega}_i < \frac{1}{2}\right) \leq e^{-\frac{1}{8}},
\end{aligned}
\end{gather}
where the last inequality follows from Chernoff bounds $\hyperref[lm:chernoff-bounds]{(b)}$ with $\delta=1/2$. In particular we have for all  $i \in \mathcal{I}$ $$\frac{(\theta_i\gamma_i -  \theta_i\mathbf{a}_i^T(\beta\mathbf{x}))}{\mathcal{Q}^{\sf{LP}}(\beta\mathbf{x})} \leq 1.$$ This is because the unit vector $\mathbf{e}_i$ is feasible for $\hyperref[eq:qlp]{\mathcal{Q}^{\sf{LP}}(\beta\mathbf{x})}$ for all $i \in \mathcal{I}$ {\color{blue} which implies} $$(\theta_i\gamma_i -  \theta_i\mathbf{a}_i^T(\beta\mathbf{x})) \leq \mathcal{Q}^{\sf{LP}}(\beta\mathbf{x}).$$ We also have, $$\mathbb{E}\left [\sum_{i \in \mathcal{I}} \frac{(\theta_i\gamma_i -  \theta_i\mathbf{a}_i^T(\beta\mathbf{x}))}{\mathcal{Q}^{\sf{LP}}(\beta\mathbf{x})}\Tilde{\omega}_i\right] = 1.$$ Combining inequalities (\ref{eq:property1-ar}), (\ref{eq:property2-ar}) and (\ref{eq:property3-ar}) we get that $(\mathbf{h}, \mathbf{z})$ verifies the properties~\eqref{eq:properties-ar} with probability at least $$1-\frac{c}{n}-\frac{c'}{L}-e^{-\frac{1}{8}},$$ which is greater than a constant for $n$ and $L$ large enough. Which concludes the proof of the structural property. \qed

\section{On Assumption \ref{assumption}.}
\label{appendix:generalA}

Assumption~\ref{assumption} can be made without loss of generality. To show this, we construct for every instance $I$ of \ref{eq:ar} a new instance $\Tilde{I}$ such that Assumption \ref{assumption} holds under $\Tilde{I}$ and the optimal value of $\Tilde{I}$ is within a factor $2$ of the value of $I$. This implies that our LP approximation 
from Section~\ref{section:ar} under $\Tilde{I}$ is an $O(\frac{\log n}{\log \log n} \frac{\log L}{\log \log L})$ approximation for  $I$. In particular, consider an instance $I$ of \ref{eq:ar} given by,
\begin{gather}
\tag*{ }
\begin{aligned}
z_{I} = \min_{\mathbf{x} \in \mathcal{X}} \quad \mathbf{c}^T\mathbf{x} + \max_{\mathbf{h} \in \mathcal{U}} \min_{\mathbf{y}\geq \mathbf{0}} \; \{\mathbf{d}^T\mathbf{y} \mid
\mathbf{A}\mathbf{x} + \mathbf{B}\mathbf{y}\geq \mathbf{h}\},
\end{aligned}
\end{gather}
To $I$ we associate the following modified instance $\Tilde{I}$,
\begin{gather}
\tag*{ }
\begin{aligned}
z_{\Tilde{I}} = \min_{(\mathbf{x}, \mathbf{y}_0) \in \Tilde{\mathcal{X}}} \quad (\mathbf{c}^T \; \mathbf{d}^T) \left(\begin{matrix}
\mathbf{x}\\\mathbf{y}_0
\end{matrix}\right) + \max_{\mathbf{h} \in \mathcal{U}} \min_{\mathbf{y}\geq \mathbf{0}} \; \left\{\mathbf{d}^T\mathbf{y}\; \left|\;
(\mathbf{A} \; \mathbf{B})
\left(\begin{matrix}
\mathbf{x}\\\mathbf{y}_0
\end{matrix}\right)
 + \mathbf{B}\mathbf{y}\geq \mathbf{h}\right.\right\},
\end{aligned}
\end{gather}
where $$\Tilde{\mathcal{X}} = \left\{(x, y_0) 
\;\left|\; \begin{matrix}
    \mathbf{x} \in \mathcal{X}\\
    \mathbf{y}_0 \geq 0\\
    \mathbf{A}\mathbf{x}+\mathbf{B}\mathbf{y}_0 \geq 0
\end{matrix}
\right.\right\}.$$
Note that $\Tilde{I}$ is indeed an instance of \ref{eq:ar} as the first and second-stage cost vectors $\left(\begin{matrix}
\mathbf{c}\\\mathbf{d}
\end{matrix}\right)$ and $\mathbf{d}$ and the second-stage matrix $\mathbf{B}$ all have non-negative coefficients and the first stage feasible set $\Tilde{\mathcal{X}}$ is still a polyhedral cone. Assumption \ref{assumption} is verified under $\Tilde{I}$ by definition of $\Tilde{\mathcal{X}}$. We now show that $\Tilde{I}$ gives a $2$-approximation of $I$. In particular, we prove the following lemma,

\begin{lemma}
    $z_{I} \leq z_{\Tilde{I}} \leq 2 z_{I}$
\end{lemma}
\begin{proof}
    First of all, let $\Tilde{\mathbf{x}}^*, \Tilde{\mathbf{y}}_0^*$ be an optimal solution of $\Tilde{I}$. For every $h \in \mathcal{U}$ we have,
    \begin{align*}
        \mathbf{d}^T \mathbf{\Tilde{y}}_0^* + \min_{\mathbf{y} \geq \mathbf{0}} \left\{\mathbf{d}^T \mathbf{y} \;|\;
        \mathbf{A}\mathbf{\Tilde{x}}^* + \mathbf{B}(\mathbf{y} + \mathbf{\Tilde{y}}_0^*) \geq \mathbf{h}\right\}
        &=
        \min_{\substack{\mathbf{y} \geq \mathbf{\Tilde{y}}_0^*}} \left\{\mathbf{d}^T \mathbf{y} \;|\;
        \mathbf{A}\mathbf{\Tilde{x}}^* + \mathbf{B}\mathbf{y} \geq \mathbf{h}\right\}\\
        &\geq \min_{\mathbf{y}\geq 0} \left\{\mathbf{d}^T \mathbf{y} \;|\;
        \mathbf{A}\mathbf{\Tilde{x}}^* + \mathbf{B}\mathbf{y} \geq \mathbf{h}\right\}.
    \end{align*}
Hence,

\begin{align*}
        z_{\Tilde{I}} &= \mathbf{c}^T\mathbf{\Tilde{x}}^* + \mathbf{d}^T \mathbf{\Tilde{y}}_0^* + \max_{h \in \mathcal{U}}\min_{\mathbf{y} \geq \mathbf{0}} \left\{\mathbf{d}^T \mathbf{y} \;|\;
        \mathbf{A}\mathbf{\Tilde{x}}^* + \mathbf{B}(\mathbf{y} + \mathbf{\Tilde{y}}_0^*) \geq \mathbf{h}\right\}\\
        &\geq 
        \mathbf{c}^T\mathbf{\Tilde{x}}^* + \max_{h \in \mathcal{U}}\min_{\mathbf{y}\geq 0} \left\{\mathbf{d}^T \mathbf{y} \;|\;
        \mathbf{A}\mathbf{\Tilde{x}}^* + \mathbf{B}\mathbf{y} \geq \mathbf{h}\right\}\\
        &\geq z_{I}.
\end{align*}
where the last inequality follows by feasibility of $\mathbf{\Tilde{x}}^*$ for $I$. For the inverse inequality, let $\mathbf{x}^*$ be an optimal solution of $I$, and let $\mathbf{y}(0) \in \argmin_{\mathbf{y}\geq 0} \left\{\mathbf{d}^T \mathbf{y} \;|\; \mathbf{A}\mathbf{x}^* + \mathbf{B}\mathbf{y} \geq \mathbf{0}\right\}$. We have,
\begin{align*}
     z_{I} 
     &= \mathbf{c}^T\mathbf{x}^* + \max_{h \in \mathcal{U}}\min_{\mathbf{y}\geq 0} \left\{\mathbf{d}^T \mathbf{y} \;|\;
        \mathbf{A}\mathbf{x}^* + \mathbf{B}\mathbf{y} \geq \mathbf{h}\right\}\\
    &\geq 
    \mathbf{c}^T\mathbf{x}^* + \frac{1}{2}\left(\mathbf{d}^T \mathbf{y}(0) + \max_{h \in \mathcal{U}}\min_{\mathbf{y}\geq 0} \left\{\mathbf{d}^T \mathbf{y} \;|\;
        \mathbf{A}\mathbf{x}^* + \mathbf{B}\mathbf{y} \geq \mathbf{h}\right\}\right)\\
    &\geq 
    \mathbf{c}^T\mathbf{x}^* + \frac{1}{2}\left(\mathbf{d}^T \mathbf{y}(0) + \max_{h \in \mathcal{U}}\min_{\mathbf{y}\geq 0} \left\{\mathbf{d}^T \mathbf{y} \;|\;
        \mathbf{A}\mathbf{x}^* + \mathbf{B}\mathbf{y} + \mathbf{B}\mathbf{y}(0) \geq \mathbf{h}\right\}\right)\\
    &\geq \frac{1}{2} z_{\Tilde{I}}
\end{align*}
where the first inequality follows from the fact that $\mathbf{0}$ is a feasible scenario of the uncertainty set and the last inequality follows by feasibility of $(\mathbf{x}^*, \mathbf{y}(0))$ for $\Tilde{I}$ and because $\mathbf{c}^T\mathbf{x}^* \geq 0$. \qed
\end{proof}

\begin{remark}
    Note that for every feasible affine policy of $\Tilde{I}$ given by the first stage solution $(\Tilde{\mathbf{x}}, \Tilde{\mathbf{y}}_0)$ and the second-stage affine function $\Tilde{\mathbf{y}}_{\sf AFF}$, the affine policy given by the first stage solution $ \Tilde{\mathbf{x}}$ and the second-stage affine function $\mathbf{y}_{\sf AFF} =\Tilde{\mathbf{y}}_{\sf AFF} + \Tilde{\mathbf{y}}_0$ is a feasible affine policy of $I$ with same worst-case cost. This implies that the results of Section~\ref{section:affine} also hold in the general case. In particular, the affine policies constructed in Section~\ref{section:affine} under $\Tilde{I}$ can be used to construct affine policies for $I$ that are an $O(\frac{\log n}{\log \log n} \frac{\log L}{\log \log L})$ approximation for $I$.
\end{remark}

An example of an application of the two-stage problem where Assumption~\ref{assumption} does not hold is the following two-stage network design problem: consider a directed graph $D(V,A)$ where each arc $a \in A$ is associated with a first-stage cost $c_a \geq 0$ and each node $v \in V$ is associated with a second stage cost $d_v \geq 0$ and receives an uncertain demand $h_v \geq 0$. The decision maker chooses a flow vector $f \in \mathbb{R}_+^{A}$ where $f_{vw}$ represents the quantity of supply to node $w$ coming from node $v$ and incurs the first stage cost $\sum_{vw} c_{vw} f_{vw}$, the demands are then revealed and the decision maker incurs a second-stage cost $d_v \cdot (h_v + \sum_{w: vw \in A} f_{vw} - \sum_{w: wv \in A} f_{wv})^+$ for each node $v$ where the flow balance $\sum_{w: wv \in A} f_{wv} - \sum_{w: vw \in A} f_{vw}$ is lower than the demand $h_v$. This example can be modeled as an instance of \ref{eq:ar} where $\mathbf{B}=\mathbf{I}$ and $\mathbf{A}$ is the incidence matrix of the considered directed graph. The flow vector $f$ such that $f_{vw}=1$ for some arc $vw$ and $f_{v'w'}=0$ for every other arc $v'w'$ is such that $(\mathbf{A}\mathbf{f})_v = -1 < 0$.

\section{Derivation of the LP formulation corresponding to the generalization of Algorithm 2 of El Housni and Goyal \cite{housni2021affine} to packing uncertainty sets.}
\label{appendix:proof_approx_affine}
When the second-stage variable $\mathbf{y}$ is restricted to affine policies of the form $\mathbf{y}(\mathbf{h})=\sum_i \nu_i \mathbf{v}_i h_i + \mathbf{q},$ for some $\nu_1, \dots, \nu_m \in \mathbb{R}$ and $\mathbf{q} \in \mathbb{R}^{n}$, the two-stage problem \ref{eq:ar} becomes,

\begin{align*}
\min \quad & \mathbf{c}^T\mathbf{x} + z\\
& z \geq \mathbf{d}^T(\sum_i \nu_i \mathbf{v}_i h_i + \mathbf{q}), \quad \forall \mathbf{h} \in \mathcal{U}\\
& \mathbf{A}\mathbf{x} + \mathbf{B}(\sum_i \nu_i \mathbf{v}_i h_i + \mathbf{q}) \geq \mathbf{h}, \quad \forall \mathbf{h} \in \mathcal{U}\\
&\sum_i \nu_i \mathbf{v}_i h_i + \mathbf{q} \geq \mathbf{0}, \quad \forall \mathbf{h} \in \mathcal{U}\\
&\mathbf{x}\in \mathcal{X}, \; \mathbf{q} \in \mathbb{R}^n, \; \nu_i \in \mathbb{R}, \; z \in \mathbb{R},
\end{align*}

We use standard duality techniques to derive formulation \ref{eq:eg}. The first constraint is equivalent to 
$$ z - \mathbf{d}^T \mathbf{q} \geq \underset {\mathbf h \geq \mathbf 0}{\underset{  \mathbf R  \mathbf h \leq \mathbf r  }\max } \; \sum_i \nu_i \mathbf{d}^T\mathbf{v}_i h_i .$$
By taking the dual of the maximization problem, the constraint is equivalent to
$$ z- \mathbf{d}^T \mathbf{q} \geq \underset {\mathbf{v} \geq \mathbf 0 }{\underset{\mathbf R^T \mathbf v \geq (\mathbf Y \cdot {\sf diag}  ( \nu_1, \dots, \nu_m  ))^T \mathbf d}\min } \;\mathbf r^T \mathbf v.$$
Where $\mathbf{Y} := [\mathbf{v}_1, \dots, \mathbf{v}_m]$. We then drop the min and introduce $\mathbf v $ as a variable to obtain the following linear constraints,
$$ z- \mathbf d^T \mathbf q \geq \mathbf r^T \mathbf v $$
$$ \mathbf R^T \mathbf v \geq (\mathbf Y \cdot {\sf diag}  ( \nu_1, \dots, \nu_m  ))^T \mathbf d $$
$$ \mathbf{v}   \in  {\mathbb R}^{L}_+.$$
We use the same technique for the second sets of constraints, i.e.,
$$ \mathbf{A}\mathbf{x} + \mathbf{B}   \mathbf q   \ \; \geq \;    \underset {\mathbf h \geq \mathbf 0 }{\underset{  \mathbf R  \mathbf h \leq \mathbf r  }\max } \; (  \mathbf I_m - \mathbf B \mathbf Y \cdot {\sf diag}  ( \nu_1, \dots, \nu_m) ) \mathbf{h}  .$$
By taking the dual of the maximization problem for each row and dropping the $\min$ we get the following formulation of these constraints
$$ \mathbf A \mathbf x + \mathbf B \mathbf q \geq \mathbf V^T \mathbf r $$
$$ \mathbf V^T \mathbf R \geq \mathbf I_m - \mathbf B \mathbf Y \cdot {\sf diag}  ( \nu_1, \dots, \nu_m ) $$
$$  \mathbf{V}   \in  {\mathbb R}^{L \times m}_+ .$$
Similarly, the last constraint
$$ \mathbf{q} \; \geq \;\underset {\mathbf h \geq \mathbf 0 }{\underset{  \mathbf R  \mathbf h \leq \mathbf r  }\max } \;      -\mathbf Y \cdot {\sf diag}  ( \nu_1, \dots, \nu_m ) \mathbf{h},  $$
is equivalent to 
$$ \mathbf q \geq \mathbf U^T \mathbf r $$
$$ \mathbf U^T \mathbf R + \mathbf Y \cdot {\sf diag}  ( \nu_1, \dots, \nu_m  ) \geq \mathbf 0 $$
$$\mathbf{U}   \in  {\mathbb R}^{L \times n}_+ . $$
Putting all together, we get the following formulation,

\begin{gather}
\label{eq:eg}
\tag{EG}
\begin{aligned}
z_{\sf EG}= \min \; &  \mathbf{c}^T \mathbf{x} + z \\
& z- \mathbf d^T \mathbf q \geq \mathbf r^T \mathbf v \\
& \mathbf R^T \mathbf v \geq (\mathbf Y \cdot {\sf diag}  ( \nu_1, \dots, \nu_m  ))^T \mathbf d \\
& \mathbf A \mathbf x + \mathbf B \mathbf q \geq \mathbf V^T \mathbf r\\
& \mathbf V^T \mathbf R \geq \mathbf I_m - \mathbf B \mathbf Y \cdot {\sf diag}  ( \nu_1, \dots, \nu_m ) \\
& \mathbf q \geq \mathbf U^T \mathbf r \\
& \mathbf U^T \mathbf R + \mathbf Y \cdot {\sf diag}  ( \nu_1, \dots, \nu_m  ) \geq \mathbf 0 \\
& {\mathbf{x}  \in  {\cal X} }, \; \mathbf{v}   \in  {\mathbb R}^{L}_+, \; \mathbf{U}   \in  {\mathbb R}^{L \times n}_+, \; \mathbf{V}   \in  {\mathbb R}^{L \times m}_+ \\
&  \mathbf q \in \mathbb{R}^n, \; \nu_1, \dots, \nu_m \in {\mathbb R}, \; z \in \mathbb{R},
\end{aligned}
\end{gather}
\qed

\end{appendices}

\end{document}